\newcommand{\bC}{\mathbb{C}}
\newcommand{\bQ}{\mathbb{Q}}
\newcommand{\bR}{\mathbb{R}}
\newcommand{\bZ}{\mathbb{Z}}
\newcommand{\cA}{\mathcal{A}}
\newcommand{\cF}{\mathcal{F}}
\newcommand{\cH}{\mathcal{H}}
\newcommand{\Ag}{\mathcal{A}_g}
\newcommand{\Fg}{\mathcal{F}_g}
\newcommand{\Hg}{\mathcal{H}_g}
\newcommand{\fS}{\mathfrak{S}}
\newcommand{\gB}{\mathbf{B}}
\newcommand{\gG}{\mathbf{G}}
\newcommand{\gGL}{\mathbf{GL}}
\newcommand{\gGSp}{\mathbf{GSp}}
\newcommand{\gH}{\mathbf{H}}
\newcommand{\gL}{\mathbf{L}}
\newcommand{\gM}{\mathbf{M}}
\newcommand{\gN}{\mathbf{N}}
\newcommand{\gO}{\mathbf{O}}
\newcommand{\gP}{\mathbf{P}}
\newcommand{\gQ}{\mathbf{Q}}
\newcommand{\gS}{\mathbf{S}}
\newcommand{\gSL}{\mathbf{SL}}
\newcommand{\gSO}{\mathbf{SO}}
\newcommand{\gSp}{\mathbf{Sp}}
\newcommand{\gT}{\mathbf{T}}
\newcommand{\gU}{\mathbf{U}}
\newcommand{\gZ}{\mathbf{Z}}
\newcommand{\rH}{\mathrm{H}}
\newcommand{\rM}{\mathrm{M}}
\newcommand{\AG}[1]{A_{\gG,#1}}
\newcommand{\AH}[1]{A_{\gH,#1}}
\newcommand{\KG}{K_\gG}
\newcommand{\KH}{K_\gH}
\newcommand{\KZ}{K_\gZ}
\newcommand{\gBZ}{\gB_\gZ}
\newcommand{\gMG}{\gM_\gG}
\newcommand{\gMH}{\gM_\gH}
\newcommand{\gPG}{\gP_\gG}
\newcommand{\gPH}{\gP_\gH}
\newcommand{\gPZ}{\gP_\gZ}
\newcommand{\gSG}{\gS_{\gG}}
\newcommand{\gSH}{\gS_{\gH}}
\newcommand{\gSZ}{\gS_\gZ}
\newcommand{\gTG}{\gT_\gG}
\newcommand{\gUG}{\gU_\gG}
\newcommand{\gUH}{\gU_\gH}
\newcommand{\gUZ}{\gU_\gZ}
\newcommand{\abs}[1]{\lvert #1 \rvert}
\newcommand{\defterm}[1]{\textbf{#1}}
\newtheorem{lemma}{Lemma}[section]
\newtheorem{proposition}[lemma]{Proposition}
\newtheorem{theorem}[lemma]{Theorem}
\newtheorem{conjecture}[lemma]{Conjecture}
\newtheorem*{lemma4A}{Lemma 4.A}
\newcounter{constant}
\newcommand{\newC}[1]{%
   \refstepcounter{constant} C_{\theconstant}%
   \ifthenelse{\equal{#1}{*}} { } {%
      \label{C:#1}%
   }%
}
\newcommand{\refC}[1]{C_{\ref*{C:#1}}}
\title{Height bounds and the Siegel property}
\author{Martin Orr}
\begin{document}

\begin{abstract}
Let \( \gG \) be a reductive group defined over \( \bQ \) and let \( \fS \) be a Siegel set in \( \gG(\bR) \).
The Siegel property tells us that there are only finitely many \( \gamma \in \gG(\bQ) \) of bounded determinant and denominator for which the translate \( \gamma.\fS \) intersects~\( \fS \).
We prove a bound for the height of these \( \gamma \) which is polynomial with respect to the determinant and denominator.
The bound generalises a result of Habegger and Pila dealing with the case of \( \gGL_2 \), and has applications to the Zilber--Pink conjecture on unlikely intersections in Shimura varieties.

In addition we prove that if \( \gH \) is a subgroup of \( \gG \), then every Siegel set for \( \gH \) is contained in a finite union of \( \gG(\bQ) \)-translates of a Siegel set for \( \gG \).
\end{abstract}

\maketitle

\section{Introduction}

A Siegel set is a subset of the real points \( \gG(\bR) \) of a reductive \( \bQ \)-algebraic group of a certain nice form.
The notion of Siegel set was introduced by Borel and Harish-Chandra~\cite{borel-harish-chandra}, in order to prove the finiteness of the covolume of arithmetic subgroups of \( \gG(\bR) \).
In this paper we use a variant of the notion due to Borel~\cite{borel:groupes-arithmetiques} which takes into account the \( \bQ \)-structure of the group~\( \gG \), and gives an intrinsic construction of fundamental sets for arithmetic  subgroups in \( \gG(\bR) \).

Let \( \fS \subset \gG(\bR) \) be a Siegel set (see section~\ref{sec:siegel-sets} for the precise definition).
The primary theorem of this paper is a bound for the height of elements of
\[ \fS.\fS^{-1} \cap \gG(\bQ) = \{ \gamma \in \gG(\bQ) : \gamma.\fS \cap \fS \neq \emptyset \} \]
in terms of their determinant and denominators.
This gives a quantitative version of \cite[Corollaire~15.3]{borel:groupes-arithmetiques},
which asserts that \( \fS.\fS^{-1} \cap \gG(\bQ) \) has only finitely many elements with given determinant and denominators.
This in turn implies a quantitative version of the Siegel property, one of the key properties of Siegel sets.

\begin{theorem} \label{intro:main-theorem}
Let \( \gG \) be a reductive \( \bQ \)-algebraic group and let \( \fS \subset \gG(\bR) \) be a Siegel set.
Let \( \rho \colon \gG \to \gGL_n \) be a faithful \( \bQ \)-algebraic group representation.

There exists a constant \( \newC{height-bound-intro-multiplier} \) (depending on \( \gG \), \( \fS \) and \( \rho \)) such that,
for all
\[ \gamma \in \fS.\fS^{-1} \cap \gG(\bQ) , \]
if \( N = \abs{\det \rho(\gamma)} \) and \( D \) is the maximum of the denominators of entries of \( \rho(\gamma) \), then
\[ \rH(\rho(\gamma)) \leq \max(\refC{height-bound-intro-multiplier} N D^n, D). \]
\end{theorem}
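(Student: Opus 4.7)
The plan is to decompose $\gamma$ via the Siegel structure, diagonalise the torus part using weights of $\rho$, and then extract the height bound by combining rationality of $\rho(\gamma)$ with the determinant and Siegel constraints.

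First I write $\gamma = s_1 s_2^{-1}$ with $s_1, s_2 \in \fS$ and apply the definition of a Siegel set to factor each $s_i = \omega_i a_i k_i$, where $\omega_i$ lies in a compact subset of the unipotent radical of a minimal $\bQ$-parabolic, $a_i$ lies in a Siegel cone of a maximal $\bQ$-split torus $\gS$, and $k_i$ lies in a fixed maximal compact of $\gG(\bR)$. This gives
\[ \rho(\gamma) = \rho(\omega_1)\,\rho(a_1)\,\rho(k_1 k_2^{-1})\,\rho(a_2)^{-1}\,\rho(\omega_2)^{-1}, \]
with every factor except $\rho(a_1)$ and $\rho(a_2)^{-1}$ of operator norm uniformly bounded by a constant depending only on $\gG$, $\fS$ and $\rho$. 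Passing to a basis of $V = \bC^n$ adapted to the weight decomposition of a maximal torus $\gT \supset \gS$ makes $\rho(a_i)$ diagonal with entries $\chi(a_i)$ running over the weights $\chi$ of $\rho$, so each entry of $\rho(\gamma)$ becomes a bounded linear combination of products $\chi(a_1)\chi'(a_2)^{-1}$.

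The main step is then to bound the extremal product $\chi(a_1)\chi'(a_2)^{-1}$ by $O(N D^{n-1})$. Three ingredients enter: the Siegel inequalities $\alpha(a_i) \geq t$ for simple $\bQ$-roots $\alpha$ constrain the consecutive ratios of weight values on $a_i$; the determinant identity $\prod_\chi \chi(a_1 a_2^{-1})^{m_\chi}$ equals $N$ up to a bounded factor (with $m_\chi$ the multiplicity of weight $\chi$); and rationality of $\rho(\gamma)$ provides the crucial extra input, since the denominator bound $D$ forces every nonzero entry to be $\geq 1/D$, yielding a lower bound on the ``dual'' small entry governed by the opposite pair of weights. Combining these ingredients eliminates the unbounded torus quantities and gives $\chi(a_1)\chi'(a_2)^{-1} \leq C N D^{n-1}$, hence $\rH(\rho(\gamma)) = D \cdot \max_{ij}|\rho(\gamma)_{ij}| \leq C N D^n$ (the $D$ term in the $\max$ absorbs the regime where no entry exceeds $O(1)$). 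In the prototype case $\gG = \gGL_n$ with $\rho$ standard and $a_i = \mathrm{diag}(y_{i,1},\ldots,y_{i,n})$ with $y_{i,1} \geq \cdots \geq y_{i,n}$ up to Siegel constants, the largest entry of $\rho(\gamma)$ has size $\lesssim y_{1,1}/y_{2,n}$, the $(n,1)$-entry equals $y_{1,n} k_{n,1}/y_{2,1}$ (so rationality forces $y_{2,1} \leq C D y_{1,n}$), and combining with $\prod_i y_{1,i}/y_{2,i} = \pm N$ up to a bounded factor from $k$ yields $y_{1,1}/y_{2,n} \leq C N D^{n-1}$.

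The main obstacle is executing this uniformly in $\rho$ and $\gG$: selecting the correct extremal pair of weights for each irreducible summand of $\rho$, handling the case where the ``dual'' small entry vanishes (in which case $\gamma$ lies in a proper parabolic $\bQ$-subgroup and one reduces to that subgroup), and ensuring that the bounded factor from $\rho(k_1 k_2^{-1})$ does not vanish at the chosen entry and thereby spoil the $1/D$ lower bound. Choosing the entry at which to apply the rationality bound in a $k$-independent way, and checking that the Siegel monotonicity is enough to propagate the resulting constraint across all weights of $\rho$, is where the technical difficulties concentrate.
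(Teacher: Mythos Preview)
Your overall plan for the \( \gGL_n \) prototype---decompose via the Siegel structure, use rationality to get a \( 1/D \) lower bound on a well-chosen entry, and combine this with the determinant identity and the Siegel inequalities---is exactly the skeleton of the paper's argument. But there is a genuine gap at the step you yourself flag: the entry \( \gamma_{n1} = \beta_n \kappa_{n1} \alpha_1^{-1} \) can vanish, and your suggested fix (``\( \gamma \) lies in a proper parabolic \( \bQ \)-subgroup and one reduces to that subgroup'') is not correct as stated. A single vanishing entry does not place \( \gamma \) in a proper standard parabolic; and even if \( \gamma \) is block upper triangular, the Siegel set \( \fS \) is not a Siegel set for the Levi, so an inductive reduction does not go through directly. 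The paper's solution is a combinatorial device: it partitions \( \{1,\dotsc,n\} \) into \emph{segments} according to the finest block upper triangular structure of \( \gamma \), and applies the \( 1/D \) lower bound not at \( (n,1) \) but at each \emph{leading entry} (the leftmost nonzero entry in each row). Chaining the resulting inequalities \( \alpha_j \ll D\beta_i \) along sequences of leading entries within a segment, together with the determinant identity, yields \( \beta_j \ll ND^{n-1}\alpha_i \) whenever \( i,j \) lie in the same segment; a separate lemma shows \( \kappa \) is block diagonal along the segments, so the cross-segment terms in the expansion of \( \gamma \) vanish. This machinery is the missing ingredient in your sketch.

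For general \( \gG \) and \( \rho \), the paper takes a route quite different from yours: rather than analyse the weights of \( \rho \) directly, it proves a separate structural theorem (\cref{intro:siegel-set-inclusion}) asserting that any Siegel set for \( \gH \subset \gGL_n \) is contained in finitely many \( \gGL_n(\bQ) \)-translates of a Siegel set for \( \gGL_n \), and then applies the \( \gGL_n \) case to \( \rho(\fS) \). This sidesteps precisely the obstacles you list---choosing extremal weight pairs in each irreducible summand, ensuring the compact factor does not kill the chosen entry, propagating monotonicity across all weights---none of which has an obvious resolution in your framework. Your direct approach, if it could be made to work, would be more self-contained; the paper's reduction is cleaner but requires the nontrivial comparison of Siegel sets under embeddings.
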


This theorem was inspired by a result of Habegger and Pila \cite[Lemma~5.2]{habegger-pila:beyond-ao}.
They dealt with the case \( \gG = \gGL_2 \), as a step in proving some cases of the Zilber--Pink conjecture on unlikely intersections in \( Y(1)^n \).
We are motivated by applications of \cref{intro:main-theorem} to the Zilber--Pink conjecture in higher-dimensional Shimura varieties, which is the subject of work in progress by the author.
The key point for these applications is that the bound is polynomial in the determinant~\( N \).

The second main theorem of this paper compares Siegel sets for the group~\( \gG \) with Siegel sets for a subgroup \( \gH \subset \gG \), which can be seen as a result on the functoriality of Siegel sets with respect to injections of \( \bQ \)-algebraic groups.
This theorem is used in the proof of \cref{intro:main-theorem} to reduce to the case \( \gG = \gGL_n \).
It also has its own applications to the Zilber--Pink conjecture.

\begin{theorem} \label{intro:siegel-set-inclusion}
Let \( \gG \) and \( \gH \) be reductive \( \bQ \)-algebraic groups, with \( \gH \subset \gG \).
Let \( \fS_\gH \) be a Siegel set in \( \gH(\bR) \).

Then there exist a finite set \( C \subset \gG(\bQ) \) and a Siegel set \( \fS_\gG \subset \gG(\bR) \) such that
\[ \fS_\gH \subset C.\fS_\gG. \]
\end{theorem}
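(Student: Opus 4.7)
My plan is to construct Siegel data for $\gG$ that extends the Siegel data defining $\fS_\gH$, and to obtain the finite set $C$ from the relative Weyl group of $\gG$, which covers the $\gH$-Weyl chamber by finitely many $\gG$-Weyl chambers.

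Denote the Siegel data underlying $\fS_\gH = \Omega_\gH \cdot \AH{t_\gH} \cdot \KH$ as follows: a minimal $\bQ$-parabolic $\gP_\gH = \gM_\gH \gU_\gH$ of $\gH$, a maximal $\bQ$-split torus $\gS_\gH$ in the center of $\gM_\gH$, a maximal compact $\KH \subset \gH(\bR)$, and a compact set $\Omega_\gH \subset \gM_\gH(\bR) \gU_\gH(\bR)$. By the standard fact that every $\bQ$-split torus is contained in a maximal one, extend $\gS_\gH$ to a maximal $\bQ$-split torus $\gS_\gG$ of $\gG$, and choose a minimal $\bQ$-parabolic $\gP_\gG$ of $\gG$ containing $\gS_\gG$. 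Since $\gH$ is reductive in $\gG$, one can find a Cartan involution of $\gG$ restricting to a Cartan involution of $\gH$; this produces a maximal compact $\KG \subset \gG(\bR)$ whose intersection with $\gH(\bR)$ is a maximal compact of $\gH(\bR)$. After conjugating $\fS_\gH$ by a suitable element of $\gH(\bR)$ (absorbable into $\Omega_\gH$), we may assume $\KH \subset \KG$.

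The crucial step is the covering of $\AH{t_\gH}$. Since $\gS_\gH \subset \gS_\gG$, we have $\AH{t_\gH} \subset \gS_\gG(\bR)^0$, but the $\gH$-positive Weyl chamber need not lie in any single $\gG$-positive chamber. However, the relative Weyl group $W_\gG := N_\gG(\gS_\gG)(\bQ) / \gZ_\gG(\gS_\gG)(\bQ)$ acts transitively on the $\gG$-Weyl chambers of $\gS_\gG(\bR)^0$, so every element is $W_\gG$-conjugate into the closed fundamental $\gG$-chamber. Choosing $t_\gG \leq 1$ ensures this closed chamber is contained in $\AG{t_\gG}$. Let $C \subset N_\gG(\gS_\gG)(\bQ) \subset \gG(\bQ)$ be a finite set of representatives of $W_\gG$; then for each $a \in \AH{t_\gH}$ there exists $c \in C$ with $c^{-1} a c \in \AG{t_\gG}$.

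For $s = \omega \cdot a \cdot k \in \fS_\gH$, pick such a $c$ and write
\[
  c^{-1} s \;=\; (c^{-1} \omega c) \cdot (c^{-1} a c) \cdot (c^{-1} k).
\]
As $c$ ranges over the finite set $C$, $\omega$ over $\Omega_\gH$, and $k$ over $\KH$, the factors $c^{-1} \omega c$ and $c^{-1} k$ lie in fixed compact subsets of $\gG(\bR)$. The main technical obstacle is to rewrite this product in the Siegel form $\Omega_\gG \cdot \AG{t_\gG} \cdot \KG$. Using the Iwasawa decomposition $\gG(\bR) = \gU_\gG(\bR) \cdot \gM_\gG(\bR) \cdot \KG$, one expresses the fixed compact contributions as bounded products from each factor, then commutes the resulting unipotent and Levi pieces past the torus element $c^{-1} a c \in \AG{t_\gG}$. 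This commutation remains bounded in the contracting direction by the choice $t_\gG \leq 1$, and leaves the Levi pieces invariant thanks to the centrality of $\gS_\gG$ in $\gM_\gG$. Absorbing the resulting bounded pieces into a suitably enlarged $\Omega_\gG$ yields $c^{-1} s \in \fS_\gG$, and hence $\fS_\gH \subset C \cdot \fS_\gG$.
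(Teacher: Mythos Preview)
Your overall architecture---extend the torus, cover $\AH{t_\gH}$ by Weyl translates of a $\gG$-chamber, and use the Weyl representatives as the finite set $C$---matches the paper's strategy. However, the paragraph you label ``the main technical obstacle'' contains a genuine gap, and this is precisely where most of the paper's work lies.

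The problem is the commutation step. Write $\alpha' = c^{-1} a c \in \AG{t_\gG}$. After Iwasawa-decomposing the compact factor on the right as $c^{-1}k = p\,k'$ with $p \in \gP_\gG(\bR)$ and $k' \in \KG$, you must move $p$ to the left of $\alpha'$, i.e.\ replace $\alpha' p$ by $(\alpha' p \alpha'^{-1})\alpha'$. For the $\gU_\gG$-component of $p$ this conjugation scales by $\chi(\alpha')$ for positive roots $\chi$; the condition $\chi(\alpha') \geq t_\gG$ gives only a \emph{lower} bound, so $\alpha' p \alpha'^{-1}$ is unbounded as $a$ varies over $\AH{t_\gH}$. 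The same obstruction arises if you instead Iwasawa-decompose $c^{-1}\omega c$ on the left and try to push its $\KG$-component to the right. Your assertion that ``this commutation remains bounded in the contracting direction by the choice $t_\gG \leq 1$'' has the direction backwards.

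The paper circumvents this in two ways. First, it chooses for each Weyl element $w$ \emph{two} representatives: one $w_\bQ' \in \gG(\bQ)$ (to form the set $C$) and one $w_K \in \KG$ (so that $w_K^{-1}\KH \subset \KG$ with no Iwasawa needed on the right). Second, it does not take an arbitrary $w$ carrying $a$ into the positive chamber: it constructs, for each $\alpha$, an auxiliary parabolic $\gQ_\alpha \supset \gPH$ and shows one can choose $w$ so that $\gUH \subset w\gUG w^{-1}$ and $\gU_\gZ \subset w\gUG w^{-1}$ (where $\gZ = Z_\gG(\gSH)$). These conditions guarantee that $w_\bQ^{-1}\Omega_\gH w_\bQ$ lands, up to a factor in $\KZ = \KG \cap \gZ(\bR)$, inside $\gUG(\bR)\gMG(\bR)^+$; and the crucial point is that $\KZ$ \emph{commutes with} $\alpha \in \gSH(\bR)$, so it can be moved past $\alpha$ at no cost.

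A secondary issue: you take $\gSG$ to be a maximal $\bQ$-split torus and separately choose $\KG$ via an extended Cartan involution, but you do not verify that this involution stabilises $\gSG$. The paper explicitly notes that one cannot in general have $\gSG$ both $\bQ$-split and stable under the Cartan involution of $\KG$; this is why its Siegel triples allow $\gSG$ to be merely $\gP_\gG(\bR)$-conjugate to a $\bQ$-split torus, and why \cref{siegel-triple-unique-torus} is needed to pin down $\gSG$ once $\gPG$ and $\KG$ are chosen.
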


\Cref{siegel-set-inclusion} gives some additional information about how the Siegel sets \( \fS_\gG \) and \( \fS_\gH \) are related to each other (in terms of the associated Siegel triples).

\subsection{Previous results:\ height bounds}

The primary inspiration for \cref{intro:main-theorem} is the following result of Habegger and Pila.

\begin{proposition} \cite[Lemma~5.2]{habegger-pila:beyond-ao} \label{hp-lemma}
Let \( \cF \) denote the standard fundamental domain for the action of \( \gSL_2(\bZ) \) on the upper half-plane.

There exists a constant \( \newC{hp-multiplier} \) such that:
for all points \( x, y \in \cF \), if the associated elliptic curves are related by an isogeny of degree~\( N \), then there exists \( \gamma \in \rM_2(\bZ) \) such that
\[ \gamma x = y, \det \gamma = N \text{ and } \rH(\gamma) \leq \refC{hp-multiplier} N^{10}. \]
\end{proposition}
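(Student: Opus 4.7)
The plan is to turn the isogeny into an explicit $2\times 2$ integer matrix acting on the upper half-plane and to bound its entries using the defining inequalities of~$\cF$. Realising the elliptic curves as $E_x = \bC/(\bZ+\bZ x)$ and $E_y = \bC/(\bZ+\bZ y)$, an isogeny of degree $N$ amounts to a scalar $\lambda\in\bC^*$ with $\lambda(\bZ+\bZ x)\subset\bZ+\bZ y$ of index $N$. Expanding $\lambda = p+qy$ and $\lambda x = r+sy$ with $p,q,r,s\in\bZ$ produces a matrix $\gamma\in\rM_2(\bZ)$ satisfying $\gamma.x = y$ as a M\"obius transformation and $|\det\gamma| = N$; reordering rows if necessary arranges $\det\gamma = N$.

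The next step is to bound the imaginary parts of $x$ and $y$ in terms of $N$. Writing $\gamma = \begin{pmatrix} a & b \\ c & d \end{pmatrix}$, the identity $\mathrm{Im}(\gamma.x) = N\,\mathrm{Im}(x)/|cx+d|^2$ combined with $|c|\,\mathrm{Im}(x) \leq |cx+d|$ rearranges to $c^2\,\mathrm{Im}(x)\mathrm{Im}(y) \leq N$. Since $x,y\in\cF$ force $\mathrm{Im}(x),\mathrm{Im}(y) \geq \sqrt{3}/2$, this already gives $|c|\leq 2\sqrt{N/3}$; and if $c\neq 0$ it also bounds $\mathrm{Im}(x)$ by $O(N)$, with the symmetric argument applied to $N\gamma^{-1}$ (which sends $y\mapsto x$ with determinant $N$) giving the same bound for $\mathrm{Im}(y)$. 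The residual case $c=0$ is elementary: $\gamma$ is upper triangular with $ad = N$, so $|a|,|d|\leq N$, and the condition $|\mathrm{Re}(y)|\leq 1/2$ forces $|b|\leq N$, yielding $\rH(\gamma)\leq N$ directly.

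For the generic case $c\neq 0$, I would apply the decomposition $\gamma = g_y\, r\, g_x^{-1}$, where
\[ g_z = \begin{pmatrix} \sqrt{\mathrm{Im}(z)} & \mathrm{Re}(z)/\sqrt{\mathrm{Im}(z)} \\ 0 & 1/\sqrt{\mathrm{Im}(z)} \end{pmatrix} \]
has determinant $1$ and sends $i$ to $z$, while $r$ fixes $i$ with $\det r = N$, so $r \in \sqrt{N}\cdot\mathrm{SO}(2)$ and its entries have absolute value at most $\sqrt{N}$. The bounds $\mathrm{Im}(z)\geq\sqrt{3}/2$ and $|\mathrm{Re}(z)|\leq 1/2$ imply that the entries of $g_z$ and $g_z^{-1}$ are $O(\sqrt{\mathrm{Im}(z)})$ up to additive constants, so the product satisfies $\rH(\gamma) \leq C\sqrt{N\,\mathrm{Im}(x)\mathrm{Im}(y)} = O(N^{3/2})$, comfortably inside the claimed bound $\refC{hp-multiplier}N^{10}$.

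The main obstacle I would expect is careful bookkeeping: the sign adjustment when producing $\gamma$, the separation of the degenerate case $c=0$ (which corresponds to isogenies respecting the cusp), and the interplay between small and large values of $\mathrm{Im}(x)$ near the boundary of~$\cF$. The generous exponent~$10$ leaves considerable slack throughout, so no step needs to be particularly delicate.
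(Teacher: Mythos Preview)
Your argument is correct and self-contained, and in fact recovers the result with exponent $3/2$ rather than $10$. Note, however, that the paper does not give its own proof of this proposition: it is quoted from \cite{habegger-pila:beyond-ao} as motivation. What the paper does instead is observe, in the paragraphs immediately following the statement, that \cref{intro:main-theorem} specialised to $\gGL_2$, the standard Siegel set, and integer matrices ($D=1$) yields the sharper bound $\rH(\gamma)\leq\refC{height-bound-intro-multiplier}N$ for \emph{every} $\gamma\in\rM_2(\bZ)$ with $\gamma x=y$ and $\det\gamma=N$, not just for one well-chosen representative.

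The two routes are genuinely different. Yours works directly in $\cH$: you split off the degenerate case $c=0$, otherwise bound $\mathrm{Im}(x),\mathrm{Im}(y)=O(N)$ via the imaginary-part formula, and read off the entries from the factorisation $\gamma=g_y\,r\,g_x^{-1}$ with $r\in\sqrt{N}\,\gSO_2(\bR)$. The paper's proof of \cref{intro:main-theorem} (section~\ref{sec:gln:proof}) stays in the group: it writes $\gamma=\nu\beta\kappa\alpha^{-1}\mu^{-1}$ with $\mu\alpha,\nu\beta\in\fS$, partitions $\{1,\dotsc,n\}$ into ``segments'' governed by the leading nonzero entries of $\gamma$, and compares the diagonal factors $\alpha,\beta$ through row-length estimates. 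Your approach is quicker and more geometric for $\gGL_2$; the paper's buys uniformity in $n$ and the optimal exponent~$1$. One small correction: the row swap you propose to arrange $\det\gamma=N$ is unnecessary and would not preserve the relation $\gamma x=y$; positivity of $\det\gamma$ is automatic once $\gamma$ carries $\cH$ into $\cH$.
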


In order to relate \cref{hp-lemma} to \cref{intro:main-theorem}, recall that the upper half-plane \( \cH \) can be identified with the symmetric space \( \gGL_2(\bR)^+ / \bR^\times \gSO_2(\bR) \), with \( \gGL_2(\bR)^+ \) acting on \( \cH \) by Möbius transformations.
Under this identification, the standard fundamental domain
\[ \cF = \{ z \in \cH : -\tfrac{1}{2} \leq \operatorname{Re} z \leq \tfrac{1}{2}, \; \abs{z} \geq 1 \} \]
is contained in the image of the standard Siegel set
\[ \fS = \Omega_{1/2} A_{\sqrt{3}/2} K \subset \gGL_2(\bR) \]
as defined in section~\ref{ssec:gln:siegel-sets}.

We further identify the quotient \( \gSL_2(\bZ) \backslash \cH \) with the moduli space \( Y(1) \) of elliptic curves over \( \bC \).
It is easy to prove that the elliptic curves associated with points \( x, y \in \cH \) are related by an isogeny of degree~\( N \) if and only if there exists \( \gamma \in \rM_2(\bZ) \) such that
\begin{equation} \label{eqn:isogeny-matrix}
\gamma x = y  \; \text{ and } \;  \det \gamma = N.
\end{equation}
\Cref{intro:main-theorem} tells us that any \( \gamma \) satisfying \eqref{eqn:isogeny-matrix} has height at most \( \refC{height-bound-intro-multiplier} N \), improving on the exponent~\( 10 \) which appears in \cref{hp-lemma}.

\Cref{intro:main-theorem} also implies a uniform version of the following previous result of the author (which is a combination of \cite[Lemma~3.3]{orr:andre-pink} with \cite[Theorem~1.3]{orr:polarised-isogenies}).

\begin{proposition} \label{andre-pink-lemma}
Let \( \cF_g \) denote the standard fundamental domain for the action of \( \gSp_{2g}(\bZ) \) on the Siegel upper half-space of rank \( g \).
Fix a point \( x \in \cF_g \).

There exist constants \( \newC{andre-pink-multiplier} \) and \( \newC{andre-pink-exponent} \) such that:
for all points \( y \in \cF_g \), if the principally polarised abelian varieties associated with \( x \) and \( y \) are related by a polarised isogeny of degree~\( N \), then there exists a matrix \( \gamma \in \gGSp_{2g}(\bQ)^+ \) such that
\[ \gamma x = y \text{ and } \rH(\gamma) \leq \refC{andre-pink-multiplier} N^{\refC{andre-pink-exponent}}. \]
\end{proposition}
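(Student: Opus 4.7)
The plan is to derive the proposition — in fact a uniform version in which the constants do not depend on the fixed point~$x$ — as a corollary of \Cref{intro:main-theorem} applied to $\gG = \gGSp_{2g}$ with the standard faithful representation $\rho \colon \gGSp_{2g} \hookrightarrow \gGL_{2g}$.

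First, just as in the $\gGL_2$ illustration given between \Cref{intro:main-theorem} and \Cref{hp-lemma}, I identify the Siegel upper half-space $\cH_g$ with the symmetric space $\gGSp_{2g}(\bR)^+ / K_\infty$, where $K_\infty$ is the stabiliser of the base point $i I_g$. I then choose a Siegel set $\fS \subset \gGSp_{2g}(\bR)$ (right-invariant under $K_\infty$, which is automatic from the definition) whose image in $\cH_g$ contains the standard fundamental domain~$\cF_g$. This is a routine construction, directly analogous to the $\gGL_2$ case recalled in the introduction.

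Second, I translate the polarised-isogeny datum into matrix language. A polarised isogeny of degree~$N$ between the principally polarised abelian varieties attached to $x$ and $y$ corresponds, via the standard dictionary on period matrices, to an element $\gamma \in \rM_{2g}(\bZ) \cap \gGSp_{2g}(\bQ)^+$ with $\gamma \cdot x = y$ and similitude factor~$N$; in particular $\abs{\det \rho(\gamma)} = N^g$ and $\rho(\gamma)$ has denominator $D = 1$. Third, I lift $x, y \in \cF_g$ to elements $\tilde{x}, \tilde{y} \in \fS$. Since $\gamma \cdot x = y$ in $\cH_g$, we have $\gamma \tilde{x} = \tilde{y} k$ for some $k \in K_\infty$; right-$K_\infty$-invariance of $\fS$ then gives $\gamma \tilde{x} \in \fS$, and hence
\[ \gamma = (\gamma \tilde{x}) \cdot \tilde{x}^{-1} \in \fS . \fS^{-1} \cap \gGSp_{2g}(\bQ) . \]
\Cref{intro:main-theorem} yields
\[ \rH(\rho(\gamma)) \leq \max(\refC{height-bound-intro-multiplier} N^g, 1), \]
which is the required polynomial bound, with exponent $\refC{andre-pink-exponent} = g$.

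The main technical point is making precise the two dictionaries: between polarised isogenies of degree~$N$ and integer matrices in $\gGSp_{2g}(\bQ)^+$ of similitude~$N$, and between the fundamental domain~$\cF_g$ and the chosen Siegel set. Both are standard but need to be pinned down with the correct normalisations before \Cref{intro:main-theorem} can be invoked; once they are in place, the quantitative content — the polynomial dependence on $N$, uniform in $x$ — is supplied entirely by \Cref{intro:main-theorem}.
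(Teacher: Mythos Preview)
The paper does not give its own proof of this proposition: it is quoted as a previous result of the author, obtained by combining \cite[Lemma~3.3]{orr:andre-pink} with \cite[Theorem~1.3]{orr:polarised-isogenies}.  What the paper \emph{does} do, in the paragraph immediately following the proposition, is precisely your strategy: it remarks that applying \Cref{intro:main-theorem} to $\gGSp_{2g}$ (in the same way as the $\gGL_2$ illustration just before) yields a stronger version of \Cref{andre-pink-lemma} in which the constants are uniform in both $x$ and~$y$.  So your route is the one the paper itself sketches; there is no separate ``paper's proof'' to compare against.

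One small normalisation point deserves attention.  If ``degree~$N$'' refers to the degree of the underlying isogeny, then for a polarised isogeny $f$ between principally polarised abelian varieties of dimension~$g$ the relation $f^\vee \lambda_y f = n\lambda_x$ forces $(\deg f)^2 = n^{2g}$, so the similitude factor is $n = N^{1/g}$ and $\abs{\det \rho(\gamma)} = n^g = N$, not $N^g$.  With $D=1$ this gives $\rH(\gamma) \le \refC{height-bound-intro-multiplier} N$, i.e.\ exponent~$1$ rather than~$g$.  This does not affect the structure of your argument, only the numerical value of $\refC{andre-pink-exponent}$; but you should pin down which convention for ``degree'' is being used before quoting a specific exponent.
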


In \cref{andre-pink-lemma}, the constant \( \refC{andre-pink-multiplier} \) depends on the fixed point \( x \in \Fg \) and only the other point~\( y \) is allowed to vary.
On the other hand, we can apply \cref{intro:main-theorem} to the symmetric space \( \Hg \) in a similar way to that sketched above for \( \cH \).
This gives a much stronger result in which the constant is uniform in both \( x \) and \( y \).
Hence \cref{intro:main-theorem} can be used to prove results on unlikely intersections in \( \Ag \times \Ag \) for which \cref{andre-pink-lemma} is not sufficient.

Note that \cite[Lemma~3.3]{orr:andre-pink} gives a height bound for unpolarised as well as polarised isogenies.
It is not possible to directly deduce a uniform version of this bound for unpolarised isogenies from \cref{intro:main-theorem} because \cite[Lemma~3.3]{orr:andre-pink} concerns the homogeneous space \( \gGL_{2g}(\bR)/\gGL_g(\bC) \) while \cref{intro:main-theorem} applies to the symmetric space \( \gGL_{2g}(\bR)/\bR^\times \gO_{2g}(\bR) \).

\subsection{Previous results:\ Siegel sets and subgroups}

Let \( \gH \) be a reductive \( \bQ \)-algebraic subgroup of \( \gG = \gGL_n \).
Borel and Harish-Chandra gave a recipe in \cite[Theorem~6.5]{borel-harish-chandra} for constructing a fundamental set for \( \gH(\bR) \) which is contained in a finite union of \( \gG(\bQ) \)-translates of a Siegel set for \( \gG \).
However it is not obvious how the resulting fundamental set is related to a Siegel set for \( \gH \).
\Cref{intro:siegel-set-inclusion} resolves this by directly relating Siegel sets for \( \gG \) and \( \gH \).

\Cref{intro:siegel-set-inclusion} can also be interpreted as a result about functoriality of Siegel sets.
According to a remark on \cite[p.~86]{borel:groupes-arithmetiques}, if \( f \colon \gH \to \gG \) is a \emph{surjective} morphism of reductive \( \bQ \)-algebraic groups and \( \fS_\gH \) is a Siegel set in \( \gH(\bR) \), then \( f(\fS_\gH) \) is contained in a Siegel set in \( \gG(\bR) \).
\Cref{intro:siegel-set-inclusion} gives a similar result for injective morphisms of reductive \( \bQ \)-algebraic groups, where the conclusion must be weakened to saying that the image of a Siegel set is contained in a finite union of \( \gG(\bQ) \)-translates of a Siegel set.
We can of course combine these to conclude that for an arbitrary morphism \( f \colon \gH \to \gG \), the image of a Siegel set \( \fS_\gH \subset \gH(\bR) \) is contained in a finite union of \( \gG(\bQ) \)-translates of a Siegel set in \( \gG(\bR) \).

The proof of \cref{intro:siegel-set-inclusion} gives an explicit bound for the size of the set \( C \subset \gG(\bQ) \), namely \( \# C \) is at most the size of the \( \bQ \)-Weyl group of \( \gG \).
The uniform nature of this bound is less powerful than it might at first appear because the Siegel set \( \fS_\gG \) depends on \( \fS_\gH \).

\subsection{Application to unlikely intersections}

The author's motivation for studying \cref{intro:main-theorem} is due to its applications to the Zilber--Pink conjecture on unlikely intersections in Shimura varieties \cite[Conjecture~1.2]{pink:zp}.
To illustrate these applications, consider the following special case of the Zilber--Pink conjecture.

\begin{conjecture} \label{zp-conj-isog}
Let \( g \geq 2 \) and let \( \Ag \) denote the moduli space of principally polarised abelian varieties of dimension~\( g \) over~\( \bC \).

For each point \( s \in \Ag \), let \( (A_s, \lambda_s) \) denote the associated principally polarised abelian variety.
Let
\[ \Sigma = \{ (s_1, s_2) \in \Ag \times \Ag : \text{there exists an isogeny } A_{s_1} \to A_{s_2} \}. \]

Let \( V \subset \Ag \times \Ag \) be an irreducible algebraic curve.

If \( V \cap \Sigma \) is infinite, then \( V \) is contained in a proper special subvariety of \( \Ag \times \Ag \).
\end{conjecture}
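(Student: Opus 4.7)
The plan is to attack this conjecture via the Pila--Zannier strategy, with \cref{intro:main-theorem} serving as the arithmetic input. First, I would lift \( V \) to an analytic curve \( \tilde V \subset \Hg \times \Hg \) under the uniformisation \( \Hg \times \Hg \to \Ag \times \Ag \), and cut it down to
\[ Z := \tilde V \cap (\Fg \times \Fg). \]
By work of Peterzil--Starchenko on the definability of the uniformisation over a fundamental domain, \( Z \) is definable in the o-minimal structure \( \bR_{\mathrm{an},\exp} \).

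Each point \( (s_1,s_2) \in V \cap \Sigma \) lifts to some \( (x_1,x_2) \in Z \) together with an element \( \gamma \in \gGSp_{2g}(\bQ)^+ \) satisfying \( \gamma.x_1 = x_2 \); if the isogeny \( A_{s_1} \to A_{s_2} \) has degree~\( N \) then the determinant and denominators of \( \gamma \) are bounded by a power of~\( N \). By \cref{intro:main-theorem} applied to the standard representation of \( \gGSp_{2g} \), we may choose \( \gamma \) with \( \rH(\gamma) \leq \refC{height-bound-intro-multiplier} N^c \) for some \( c = c(g) \). I would then apply the Pila--Wilkie counting theorem to the definable set
\[ Z' := \{ \gamma \in \gGSp_{2g}(\bR)^+ : \exists (x_1,x_2) \in Z \text{ with } \gamma.x_1 = x_2 \}, \]
to conclude that either the number of rational matrices in \( Z' \) of height \( \leq T \) is \( O_\epsilon(T^\epsilon) \) outside any fixed finite union of semi-algebraic arcs, or else \( Z' \) contains such an arc.

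To close the argument, assume \( V \) is defined over a number field \( F \) and pick \( (s_1,s_2) \in V \cap \Sigma \) coming from an isogeny of smallest degree~\( N \); the \( \mathrm{Gal}(\bar F/F) \)-orbit should then contain \( \gg N^\delta \) further pairs still related by isogenies of degree polynomially bounded in~\( N \), giving the same number of rational matrices in \( Z' \) of height polynomial in~\( N \). For \( N \) large this overwhelms the Pila--Wilkie bound, forcing \( Z' \) to contain a positive-dimensional semi-algebraic arc, and the hyperbolic Ax--Schanuel theorem of Mok--Pila--Tsimerman applied to \( \Ag \times \Ag \) then forces \( V \) to lie in a proper weakly special subvariety, which a standard monodromy argument promotes to a special subvariety. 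The main obstacle will be the Galois lower bound: whereas for \( g = 1 \) treated by Habegger--Pila this is essentially classical, for general~\( g \) it demands substantial input from the theory of heights on~\( \Ag \), in the spirit of Masser--Wüstholz isogeny estimates and their quantitative refinements. The role of \cref{intro:main-theorem} is precisely to make the Pila--Wilkie side of the resulting inequality \emph{polynomial} in~\( N \), so that any polynomial Galois lower bound is enough to conclude.
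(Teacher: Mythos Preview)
The statement you are addressing is labelled a \emph{Conjecture} in the paper and is not proved there; the paper presents it only as motivation, noting that work in progress of the author establishes it ``subject to certain technical conditions and a restricted definition of the set~\( \Sigma \)'', with the \( \gGSp_{2g} \) case of \cref{intro:main-theorem} as one ingredient. There is therefore no proof in the paper against which to compare your proposal.

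As a strategy, your outline is the expected Pila--Zannier approach and you correctly isolate the Galois lower bound as the missing arithmetic input; the paper itself says that ``at present it is not known how to prove the Galois bounds which would be required'' in the general Shimura setting. One concrete issue with your sketch deserves mention: the set \( \Sigma \) in the conjecture is defined via arbitrary (unpolarised) isogenies, but an unpolarised isogeny is realised by a matrix in \( \gGL_{2g}(\bQ) \), not in \( \gGSp_{2g}(\bQ)^+ \), and the paper explicitly warns that \cref{intro:main-theorem} does \emph{not} directly give the required uniform height bound in the unpolarised case, since the relevant homogeneous space \( \gGL_{2g}(\bR)/\gGL_g(\bC) \) is not of the symmetric-space form to which the theorem applies. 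This is presumably among the reasons the author's work in progress restricts~\( \Sigma \). A second loose end is the reduction to \( V \) defined over a number field, which you assume without comment. But since the paper contains no proof, these are remarks on your sketch rather than a comparison.
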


In \cite{habegger-pila:beyond-ao}, Habegger and Pila used \cref{hp-lemma} to prove a result similar to \cref{zp-conj-isog} but for the Shimura variety \( \cA_1^n \) (\( n \geq 3 \)) instead of \( \Ag \times \Ag \) (\( g \geq 2 \)) (for reasons of dimension, \cref{zp-conj-isog} is false for \( \cA_1 \times \cA_1 \)).

In work currently in progress, the author of this paper proves \cref{zp-conj-isog} subject to certain technical conditions and a restricted definition of the set~\( \Sigma \).
This work requires the uniform version of \cref{andre-pink-lemma} which is implied by the \( \gGSp_{2g} \) case of \cref{intro:main-theorem}.
Because \cref{intro:main-theorem} applies to all reductive groups, not just~\( \gGSp_{2g} \), it should also be useful for proving statements similar to \cref{zp-conj-isog} where \( \Ag \) is replaced by an arbitrary Shimura variety.
However, at present it is not known how to prove the Galois bounds which would be required for such a statement.

\subsection{Outline of paper}

Section~\ref{sec:siegel-sets} contains the definition of Siegel sets and the associated notation used throughout the paper.
In section~\ref{sec:gln:proof} we prove \cref{intro:main-theorem} for standard Siegel sets in \( \gGL_n \), and combine this with \cref{intro:siegel-set-inclusion} to deduce the general statement of \cref{intro:main-theorem}.
The proof of the \( \gGL_n \) case is entirely self-contained.
Finally section~\ref{sec:subgroups} contains the proof of \cref{intro:siegel-set-inclusion}, relying on results on parabolic subgroups and roots from \cite{borel-tits:groupes-reductifs}.

\subsection{Notation} \label{ssec:notation}

If \( \gG \) is a real algebraic group, then we write \( \gG(\bR)^+ \) for the identity component of \( \gG(\bR) \) in the Euclidean topology.

We use a naive definition for the height of a matrix with rational entries, as in \cite{pila-wilkie}: if \( \gamma \in \rM_n(\bQ) \), then its \defterm{height} is
\[ \rH(\gamma) = \max_{1 \leq i, j \leq n} \rH(\gamma_{ij}) \]
where the height of a rational number \( a/b \) (written in lowest terms) is \( \max(\abs{a}, \abs{b}) \).
For an algebraic group \( \gG \) other than \( \gGL_n \), we define the heights of elements of \( \gG(\bQ) \) via a choice of faithful representation \( \gG \to \gGL_n \).

In order to avoid writing uncalculated constant factors in every inequality in the proof of \cref{intro:main-theorem}, we use the notation
\[ X \ll Y \]
to mean that there exists a constant \( C \), depending only on the group \( \gG \), the representation \( \rho \) and the Siegel set \( \fS \), such that
\[ \abs{X} \leq C\abs{Y}. \]

\subsection*{Acknowledgements}

I thank Philipp Habegger and Jonathan Pila for their suggestion that I should study generalisations of their result \cite[Lemma~5.2]{habegger-pila:beyond-ao} (\cref{hp-lemma} in this paper).
This suggestion was the initial inspiration for this paper.
I am grateful to Christopher Daw, Gisele Teixeira Paula, Jonathan Pila, Jinbo Ren and Andrei Yafaev for useful discussions during the writing of the paper.
I am also grateful to the referee for suggestions which improved the paper.

The work which led to this paper was funded by European Research Council grant 307364 and by EPSRC grant EP/M020266/1.

\medskip

This paper was published in \textit{Algebra \& Number Theory}, 2018, vol.~12, no.~2, pp.~455--478 (DOI: \texttt{10.2140/ant.2018.12.455}), published by Mathematical Sciences Publishers.
This version of the paper contains corrections to some typos and minor errors in the published version.
I am grateful to Dave Witte Morris for bringing these errors to my attention.

I am grateful to Christian Schnell, who discovered an error in the proof of \cref{siegel-set-inclusion} in the published version of the paper, and suggested how to correct it.  This error is corrected in this version of the paper.  The error, some associated examples, and a strengthened version of \cref{siegel-set-inclusion} which is often useful for applications, are described in the published correction \cite{orr-schnell} by myself and Schnell.

\section{Definition of Siegel sets} \label{sec:siegel-sets}

The definitions of Siegel sets used by different authors (for example, \cite{borel:groupes-arithmetiques} and \cite{amrt:smooth-compactifications}) vary in minor ways, so we state here the precise definition used in this paper.
At the same time, we define the notation which we shall use in sections \ref{sec:gln:proof} and~\ref{sec:subgroups} for the various ingredients in the construction of Siegel sets.

\subsection{Standard Siegel sets in \texorpdfstring{\( \gGL_n \)}{GLn}} \label{ssec:gln:siegel-sets}

Before defining Siegel sets in general, we begin with the simpler special case of ``standard Siegel sets'' in \( \gGL_n \).
Our definition of standard Siegel sets follows \cite[D\'efinition~1.2]{borel:groupes-arithmetiques}.
Compared to \cite{borel:groupes-arithmetiques}, we use the reverse order of multiplication for elements of~\( \gGL_n \) and therefore reverse the inequalities in the definition of \( A_t \).

Make the following definitions (all of these are special cases of the corresponding notations for general Siegel sets):
\begin{enumerate}
\item \( \gP \subset \gGL_n \) is the Borel subgroup consisting of upper triangular matrices.
\item \( K = \gO_n(\bR) \) is the maximal compact subgroup consisting of orthogonal matrices.
\item \( \gS \subset \gP \) is the maximal \( \bQ \)-split torus consisting of diagonal matrices.
\item \( A_t \) is the set \( \{ \alpha \in \gS(\bR)^+ : \alpha_j/\alpha_{j+1} \geq t \text{ for all } j \} \) for any real number \( t > 0 \).
\item \( \Omega_u \) is the compact set 
\[ \{ \nu \in \gP(\bR) : \nu_{ii} = 1 \text{ for all } i \text{ and } \abs{\nu_{ij}} \leq u  \text{ for }  1 \leq i < j \leq n \} \]
for any real number \( u > 0 \).
\end{enumerate}

A \defterm{standard Siegel set} in \( \gGL_n \) is a set of the form
\[ \fS  =  \Omega_u A_t K  \subset  \gGL_n(\bR) \]
for some positive real numbers \( u \) and \( t \).

According to \cite[Th\'eor\`emes 1.4, 4.6]{borel:groupes-arithmetiques}, if \( t \leq \sqrt{3}/2 \) and \( u \geq \tfrac{1}{2} \), then \( \fS \) is a fundamental set for \( \gGL_n(\bZ) \) in \( \gGL_n(\bR) \).

\subsection{Definition of Siegel sets in general} \label{ssec:siegel-sets-definition}

Let \( \gG \) be a reductive \( \bQ \)-algebraic group.
In order to define a Siegel set in \( \gG(\bR) \), we begin by making choices of the following subgroups of~\( \gG \):

\begin{enumerate}
\item \( \gP \) a minimal parabolic \( \bQ \)-subgroup of \( \gG \);
\item \( K \) a maximal compact subgroup of \( \gG(\bR) \).
\end{enumerate}

\begin{lemma} \label{siegel-triple-unique-torus}
For any \( \gP \) and \( K \), there exists a unique \( \bR \)-torus \( \gS \subset \gP \) satisfying the conditions
\begin{enumerate}[(i)]
\item \( \gS \) is \( \gP(\bR) \)-conjugate to a maximal \( \bQ \)-split torus in \( \gP \).
\item \( \gS \) is stabilised by the Cartan involution associated with \( K \).
\end{enumerate}
\end{lemma}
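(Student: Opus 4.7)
The plan rests on two standard facts about real reductive groups. The first is Mostow's theorem that every parabolic $\bR$-subgroup of $\gG$ contains a unique Levi $\bR$-subgroup stabilised by the Cartan involution $\theta$ associated with $K$. The second is that any two $\bR$-Levi subgroups of $\gP$ are conjugate by a unique element of $\gU(\bR)$, where $\gU$ denotes the unipotent radical of $\gP$.

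For existence, let $\gL$ be the unique $\theta$-stable $\bR$-Levi of $\gP$. Pick any maximal $\bQ$-split torus $\gS_0 \subset \gP$; its centraliser $\gL_0 = Z_\gG(\gS_0)$ is a $\bQ$-defined Levi of $\gP$ with $\gS_0$ in its centre. Let $u \in \gU(\bR)$ be the unique element with $u \gL_0 u^{-1} = \gL$, and set $\gS = u \gS_0 u^{-1}$. Then $\gS$ is an $\bR$-split torus in $Z(\gL)$ that is $\gP(\bR)$-conjugate to $\gS_0$. Since $\theta$ stabilises $\gL$, it stabilises $Z(\gL)$ and its maximal $\bR$-split subtorus $Z(\gL)_s$; a standard property of Cartan involutions is that $\theta$ acts as inversion on $Z(\gL)_s$. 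As $\gS \subset Z(\gL)_s$ is stable under inversion, $\theta(\gS) = \gS$.

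For uniqueness, let $\gS'$ also satisfy (i) and (ii). By (i), $\gS'$ is $\gP(\bR)$-conjugate to $\gS_0$, so $Z_\gG(\gS')$ is an $\bR$-Levi of $\gP$. By (ii), $Z_\gG(\gS')$ is $\theta$-stable, and Mostow's uniqueness forces $Z_\gG(\gS') = \gL$, whence $\gS' \subset Z(\gL)$. Writing $\gS' = p \gS p^{-1}$ with $p = \ell u \in \gL(\bR)\gU(\bR)$, the fact that $\gS' \subset Z(\gL)$ means conjugation by $\ell$ is trivial on $\gS'$, which yields $u \gS u^{-1} = \gS' \subset \gL$. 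For each $s \in \gS(\bR)$, the commutator $[u, s] = (u s u^{-1}) s^{-1}$ then lies in $\gL$ (both factors do) and in $\gU$ (since $\gU$ is normal in $\gP$, so $s u^{-1} s^{-1} \in \gU$). Therefore $[u, s] \in \gL \cap \gU = \{e\}$, so $u$ centralises $\gS$ and $\gS' = \gS$.

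The principal obstacle is that when the $\bQ$-rank of $\gG$ is strictly smaller than its $\bR$-rank, a $\gP(\bR)$-conjugate of $\gS_0$ need not equal the whole of $Z(\gL)_s$, so uniqueness cannot be read off directly from the canonical nature of $Z(\gL)_s$; instead one must route through the centraliser $Z_\gG(\gS')$ and a short commutator computation using the real Levi decomposition of $\gP$.
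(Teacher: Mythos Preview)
Your argument is correct. The existence step---conjugating a maximal \( \bQ \)-split torus \( \gS_0 \) into the unique \( \theta \)-stable real Levi \( \gL \) of \( \gP \), and then using that \( \theta \) acts as inversion on the \( \bR \)-split part of \( Z(\gL) \)---is clean, and the uniqueness step via the commutator identity \( [u,s] \in \gL \cap \gU = \{e\} \) is a nice way to dispense with the unipotent part of the conjugating element.

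By way of comparison: the paper does not prove this lemma at all; it simply invokes the lemma in \cite[chapter~II, section~3.7]{amrt:smooth-compactifications}. Your proof is therefore not a different route so much as an explicit unpacking of what that citation conceals. The AMRT argument is essentially the same idea (locate \( \gS \) inside the centre of the \( \theta \)-stable Levi), so your write-up could serve as a self-contained replacement for the reference. One small remark: what you call ``Mostow's theorem'' on the unique \( \theta \)-stable Levi is sometimes attributed differently in the literature (it appears for instance in Borel--Serre and in \cite{borel-ji:symmetric-spaces}); if you retain this proof, it would be worth giving a precise citation for that fact rather than a name.
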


\begin{proof}
This follows from the lemma in \cite[chapter~II, section~3.7]{amrt:smooth-compactifications}.
\end{proof}

We define a \defterm{Siegel triple} for~\( \gG \) to be a triple \( (\gP, \gS, K) \) satisfying the conditions of \cref{siegel-triple-unique-torus}.
We remark that these conditions could equivalently be stated as:
\begin{enumerate}[(i)]
\item \( \gS \) is a lift of the unique maximal \( \bQ \)-split torus in \( \gP/R_u(\gP) \).
\item \( \operatorname{Lie} \gS(\bR) \) is orthogonal to \( \operatorname{Lie} K \) with respect to the Killing form of~\( \gG \).
\end{enumerate}

Define the following further pieces of notation:
\begin{enumerate}
\item \( \gU \) is the unipotent radical of \( \gP \).
\item \( \gM \) is the preimage in \( Z_\gG(\gS) \) of the maximal \( \bQ \)-anisotropic subgroup of~\( \gP/\gU \).
(Note that by \cite[Corollaire~4.16]{borel-tits:groupes-reductifs}, \( Z_\gG(\gS) \) is a Levi subgroup of \( \gP \) and hence maps isomorphically onto \( \gP/\gU \).)
\item \( \Delta \) is the set of simple roots of \( \gG \) with respect to \( \gS \), using the ordering induced by \( \gP \).
(The roots of \( \gG \) with respect to \( \gS \) form a root system because \( \gS \) is conjugate to a maximal \( \bQ \)-split torus in~\( \gG \).)
\item \( A_t = \{ \alpha \in \gS(\bR)^+ : \chi(\alpha) \geq t \text{ for all } \chi \in \Delta \} \) for any real number \( t > 0 \).
\end{enumerate}

A \defterm{Siegel set} in \( \gG(\bR) \) (with respect to \( (\gP, \gS, K) \)) is a set of the form
\[ \fS = \Omega A_t K \]
where
\begin{enumerate}
\item \( \Omega \) is a compact subset of \( \gU(\bR) \gM(\bR)^+ \); and
\item \( t \) is a positive real number.
\end{enumerate}

\subsection{Comparison with other definitions} \label{ssec:siegel-sets-comparison}

In order to reduce confusion caused by definitions of Siegel sets which vary from one author to another, we explain how our definition compares with the definitions used in \cite{borel-harish-chandra}, \cite{borel:groupes-arithmetiques} and \cite{amrt:smooth-compactifications}.

First we compare with \cite[chapter~II, section~4.1]{amrt:smooth-compactifications}.

\begin{enumerate}
\item In \cite{amrt:smooth-compactifications}, Siegel sets are subsets of the symmetric space \( \gG(\bR)/K \), while for us they are \( K \)-right-invariant subsets of \( \gG(\bR) \).
These two perspectives are related by the quotient map \( \gG(\bR) \to \gG(\bR)/K \).

\item In \cite{amrt:smooth-compactifications}, \( \Omega \) is any compact subset of \( \gP(\bR) \), while we require \( \Omega \) to be contained in \( \gU(\bR) \gM(\bR)^+ \).
Every Siegel set in the sense of \cite{amrt:smooth-compactifications} is contained in a Siegel set in our sense and vice versa, so this difference does not matter in applications.
We impose the stricter condition on \( \Omega \) because it ensures that Siegel sets are related to the horospherical decomposition in \( \gG(\bR)/K \) (as explained in \cite[section~I.1.9]{borel-ji:symmetric-spaces}).
\end{enumerate}

Now we compare with \cite[D\'efinition~12.3]{borel:groupes-arithmetiques}.
Note that differences (3) and~(4) are significant.

\begin{enumerate}
\item We multiply together \( \Omega \), \( A_t \) and \( K \) in the opposite order from \cite{borel:groupes-arithmetiques}.
This change forces us to reverse the inequalities in the definition of \( A_t \).

\item In \cite{borel:groupes-arithmetiques}, \( \Omega \) is required to be a compact neighbourhood of the identity in \( \gU(\bR) \gM(\bR)^+ \) while we allow any compact subset.

\item Instead of our condition~(i) for~\( \gS \), \cite{borel:groupes-arithmetiques} imposes the condition that \( \gS \) must be a maximal \( \bQ \)-split torus in \( \gP \).
This stronger condition is inconvenient when we also impose condition~(ii), because there does not exist a maximal \( \bQ \)-split torus satisfying condition~(ii) for every choice of \( \gP \) and \( K \).
In particular, \cref{intro:siegel-set-inclusion} does not hold if \( \gS_\gG \) is required to be \( \bQ \)-split.

\item Our condition~(ii) for~\( \gS \) is not part of the definition of Siegel set in \cite{borel:groupes-arithmetiques}.
In \cite{borel:groupes-arithmetiques}, a Siegel set is called \defterm{normal} if condition~(ii) is satisfied.
We include condition~(ii) in the definition of a Siegel set because without it the Siegel property does not necessarily hold.
Indeed most of the theorems in \cite[chapter~15]{borel:groupes-arithmetiques} apply only to Siegel sets satisfying condition~(ii), even though the word ``normal'' is omitted from their statements.
Similarly this paper's \cref{intro:main-theorem} does not hold without condition~(ii) on~\( \gS \).
\end{enumerate}

The definition of ``Siegel domain'' in \cite[section~4]{borel-harish-chandra} is less fine than the definition used in this paper, or the one in \cite{borel:groupes-arithmetiques}, because it takes into account only the structure of \( \gG \) as a real algebraic group and not its structure as a \( \bQ \)-algebraic group.
Consequently \cite{borel-harish-chandra} could not use their Siegel domains directly to construct fundamental sets for arithmetic subgroups in \( \gG(\bR) \); instead they constructed such fundamental sets using an embedding of \( \gG \) into \( \gGL_n \) and standard Siegel sets in \( \gGL_n(\bR) \).

\subsection{Siegel sets and fundamental sets} \label{ssec:siegel-sets-fundamental}

The importance of Siegel sets is due to their use in constructing fundamental sets for an arithmetic subgroup \( \Gamma \) in \( \gG(\bR) \).
We say that a set \( \Omega \subset \gG(\bR) \) is a \defterm{fundamental set} for \( \Gamma \) if the following conditions are satisfied:
\begin{enumerate}[(F1)]
\setcounter{enumi}{-1}
\item \( \Omega.K = \Omega \) for a suitable maximal compact subgroup \( K \subset \gG(\bR) \);
\item \( \Gamma.\Omega = \gG(\bR) \); and
\item for every \( \theta \in \gG(\bQ) \),%
\footnote{Corrected from the published version.}
the set
\[ \{ \gamma \in \Gamma : \gamma.\Omega \cap \theta.\Omega \neq \emptyset \} \]
is finite (the Siegel property).
\end{enumerate}

The following two theorems show that, if we make suitable choices of Siegel set \( \fS \subset \gG(\bR) \) and finite set \( C \subset \gG(\bQ) \), then \( C.\fS \) is a fundamental set for \( \Gamma \) in \( \gG(\bR) \).

\begin{theorem} \cite[Th\'eor\`eme 13.1]{borel:groupes-arithmetiques} \label{borel-siegel-surj}
Let \( \Gamma \) be an arithmetic subgroup of \( \gG(\bQ) \).
Let \( (\gP, \gS, K) \) be a Siegel triple for \( \gG(\bR) \).

There exist a Siegel set~\( \fS \subset \gG(\bR) \) with respect to \( (\gP, \gS, K) \) and a finite set \( C \subset \gG(\bQ) \) such that
\[ \gG(\bR) = \Gamma.C.\fS. \]
\end{theorem}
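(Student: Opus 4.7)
My plan is to reduce the statement to a combination of an Iwasawa-type decomposition of $\gG(\bR)$, classical reduction theory for the split torus, cocompactness of the unipotent and anisotropic factors, and finiteness of class numbers.

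First, I would establish a decomposition of the shape $\gG(\bR) = F \cdot \gU(\bR) \gM(\bR)^+ \gS(\bR)^+ K$ for some finite $F \subset \gG(\bQ)$, which will contribute to $C$. The Siegel triple condition that $\gS$ is stabilised by the Cartan involution of $K$ (equivalently, that $\operatorname{Lie} \gS(\bR)$ is orthogonal to $\operatorname{Lie} K$ under the Killing form) is precisely what is needed to mesh the Langlands decomposition $\gP(\bR) = \gU(\bR)\gM(\bR)\gS(\bR)^+$ of the given $\bQ$-parabolic with the $\bR$-Iwasawa decomposition $\gG(\bR)^+ = \gP_0(\bR) K^\circ$ attached to a minimal $\bR$-parabolic $\gP_0 \subset \gP$ that contains $\gS$. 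This reduces the problem to covering each factor separately, modulo a suitable piece of $\Gamma$.

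Next, I would apply standard results to each factor. The unipotent $\bQ$-group $\gU$ satisfies $\gU(\bR) = (\Gamma \cap \gU(\bR)) \cdot \Omega_\gU$ for some compact $\Omega_\gU$, because $\Gamma \cap \gU(\bR)$ is a cocompact lattice. The $\bQ$-anisotropic group $\gM$ satisfies an analogous statement by Godement's compactness criterion, yielding a compact $\Omega_\gM \subset \gM(\bR)^+$. For the split torus, classical Minkowski-style reduction shows that, for sufficiently small $t$, the cone $A_t$ is a fundamental domain in $\gS(\bR)^+$ for the action of the $\bQ$-Weyl group $W_\bQ = N_\gG(\gS)(\bQ)/Z_\gG(\gS)(\bQ)$, whose $\bQ$-rational lifts contribute the remaining elements of $C$. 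Taking $\Omega = \Omega_\gU \Omega_\gM$ and $\fS = \Omega A_t K$ produces the desired Siegel set, at least after one has fixed a single minimal $\bQ$-parabolic.

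To globalise, I would invoke finiteness of the double-coset space $\Gamma \backslash \gG(\bQ) / \gP(\bQ)$ — the finiteness of class numbers for the reductive $\bQ$-group $\gG$ — and include one $\gG(\bQ)$-representative of each class in $C$. The deepest obstacle is precisely this class number finiteness, a nontrivial theorem of Borel and Harish-Chandra that one must cite rather than reprove. A more subtle technical difficulty lies in the first step: one has to verify that the $\bR$-Iwasawa decomposition (naturally tied to a minimal $\bR$-parabolic, potentially strictly smaller than the given $\gP$) harmonises cleanly with the $\bQ$-structure used to define the Siegel set. Condition~(ii) of the Siegel triple is precisely the hypothesis that makes this harmonisation possible, and carefully tracking the resulting interplay between the $\bR$-structure and the $\bQ$-structure is the main bookkeeping effort in the proof.
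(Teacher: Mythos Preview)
The paper does not prove this theorem; it is quoted verbatim as Borel's Th\'eor\`eme~13.1 and used as a black box. So there is no ``paper's proof'' to compare against, only Borel's. Your outline reproduces several correct ingredients of Borel's argument --- the Iwasawa-type decomposition $\gG(\bR)=\gU(\bR)\gM(\bR)^+\gS(\bR)^+K$ (modulo components), cocompactness of $\Gamma\cap\gU(\bR)$ in $\gU(\bR)$ and of the arithmetic subgroup of $\gM$ in $\gM(\bR)^+$, and the finiteness of $\Gamma\backslash\gG(\bQ)/\gP(\bQ)$ --- but your treatment of the split-torus factor contains a genuine gap.

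It is true, and elementary, that for $t\le 1$ the Weyl translates $w_\bQ A_t w_\bQ^{-1}$ cover $\gS(\bR)^+$. However, placing the lifts $w_\bQ$ into $C$ does not put $g=\nu\mu\alpha k$ with $\alpha\in w_\bQ A_t w_\bQ^{-1}$ into $C\cdot\fS$: one has $w_\bQ^{-1}g\in (w_\bQ^{-1}\gU w_\bQ)(\bR)\cdot\gM(\bR)^+\cdot A_t\cdot w_\bQ^{-1}K$, and $w_\bQ^{-1}\gU w_\bQ$ is the unipotent radical of a \emph{different} minimal parabolic. Concretely in $\gSL_2$, take $\nu$ upper-unipotent with entry $x\ne 0$, $|x|\le\tfrac12$, and $\alpha=\mathrm{diag}(a,a^{-1})$ with $a\to 0$; then the $A$-part of $w_\bQ\,\nu\alpha$ has ratio tending to $0$, so $w_\bQ\,\nu\alpha\notin\fS$ for any fixed $t$. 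The Weyl group permutes the chambers in $\gS(\bR)^+$ but does not respect the Langlands decomposition of $\gP$, so it cannot serve as the finite set $C$ for the torus direction.

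What actually forces the $A$-component into $A_t$ is not the finite Weyl group but the whole arithmetic group $\Gamma$. In Borel's proof (and in the Hermite--Minkowski reduction for $\gGL_n$ that underlies it) one constructs, for each $g$, an element $\gamma\in\Gamma$ for which the Iwasawa $A$-part of $\gamma g$ lies in $A_t$, by an inductive minimisation over the fundamental weights (equivalently, successive shortest-vector choices). Your phrase ``classical Minkowski-style reduction'' names exactly this tool, but what it proves is a statement about $\Gamma$-orbits on $\gG(\bR)$, not the Weyl-chamber decomposition of $\gS(\bR)^+$; conflating the two is where the argument breaks.
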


\begin{theorem} \cite[Th\'eor\`eme 15.4]{borel:groupes-arithmetiques} \label{borel-siegel-finite}
Let \( \Gamma \) be an arithmetic subgroup of \( \gG(\bQ) \).
Let \( \fS \subset \gG(\bR) \) be a Siegel set.

For any finite set \( C \subset \gG(\bQ) \) and any element \( \theta \in \gG(\bQ) \), the set
\[ \{ \gamma \in \Gamma : \gamma.C.\fS \cap \theta.C.\fS \neq \emptyset \} \]
is finite.
\end{theorem}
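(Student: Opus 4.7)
The plan is to reduce the statement to the earlier finiteness assertion Corollaire~15.3 of \cite{borel:groupes-arithmetiques} (equivalently, to \cref{intro:main-theorem}), by absorbing the auxiliary data $C$ and $\theta$ and then using the arithmeticity of $\Gamma$ to bound the determinant and denominators of the elements that arise. The intersection condition $\gamma.C.\fS \cap \theta.C.\fS \neq \emptyset$ holds if and only if there exist $c_1, c_2 \in C$ with $\delta := c_2^{-1} \theta^{-1} \gamma c_1 \in \fS.\fS^{-1} \cap \gG(\bQ)$. Since $C$ is finite and $\gamma = \theta c_2 \delta c_1^{-1}$, it suffices to show that only finitely many such $\delta$ can occur for $\gamma$ ranging in $\Gamma$.

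Fix a faithful $\bQ$-representation $\rho \colon \gG \hookrightarrow \gGL_n$. By the definition of arithmeticity, $\Gamma' := \Gamma \cap \rho^{-1}(\gGL_n(\bZ))$ has finite index in $\Gamma$. Choosing a finite set of coset representatives, one sees that $\rho(\Gamma) \subset \tfrac{1}{M} \operatorname{M}_n(\bZ)$ for some positive integer $M$. Moreover $\det \rho(\Gamma')$ is contained in $\{\pm 1\}$, and the quotient $\det\rho(\Gamma)/\det\rho(\Gamma')$ is finite, so $\det\rho(\Gamma)$ is a finite subset of $\bQ^\times$. Writing $\rho(\delta) = \rho(c_2)^{-1} \rho(\theta)^{-1} \rho(\gamma) \rho(c_1)$ and using that $c_1, c_2, \theta$ lie in a fixed finite subset of $\gG(\bQ)$, I conclude that the denominators of the entries of $\rho(\delta)$ are bounded by a constant depending only on $\rho, \Gamma, C, \theta$, while $\abs{\det\rho(\delta)}$ ranges over a finite set of values.

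Applying Corollaire~15.3 to each of the finitely many possible values of $\abs{\det\rho(\delta)}$, together with the uniform denominator bound, yields finiteness of the set of possible $\delta$, and hence of $\gamma$. Alternatively one could feed the bounds on determinant and denominator into \cref{intro:main-theorem} to obtain a height bound on $\rho(\delta)$ and then invoke Northcott-type finiteness of rationals of bounded height; this quantitative route produces the same conclusion.

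The main obstacle is not in this reduction, which is purely book-keeping, but lies inside Corollaire~15.3 (or \cref{intro:main-theorem}): that statement requires the structure theory of parabolic and reductive $\bQ$-subgroups and the fact that a Siegel set meets only finitely many cosets of an arithmetic subgroup of $\gU(\bR)$. Once that input is available, the passage from a single Siegel set to the translated sets $C.\fS$ and $\theta.C.\fS$ is elementary.
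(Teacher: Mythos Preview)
Your reduction is correct and matches the approach the paper indicates: the paper does not give its own proof of this statement but notes that \cref{intro:main-theorem} implies \cite[Corollaire~15.3]{borel:groupes-arithmetiques}, and that \cref{borel-siegel-finite} then follows ``by the same argument as in the proof of \cite[Th\'eor\`eme~15.4]{borel:groupes-arithmetiques}'', which is exactly the book-keeping you carry out (rewriting the intersection condition as $c_2^{-1}\theta^{-1}\gamma c_1 \in \fS.\fS^{-1}\cap\gG(\bQ)$ and bounding determinant and denominators via arithmeticity of $\Gamma$). Your final paragraph is commentary rather than proof and can be dropped.
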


As remarked in section~\ref{ssec:siegel-sets-comparison}, \cref{borel-siegel-finite} requires the torus~\( \gS \) used in the definition of a Siegel set to satisfy condition~(ii) from section~\ref{ssec:siegel-sets-definition}, even though this condition is erroneously omitted from the statement in \cite{borel:groupes-arithmetiques}.

This paper's \cref{intro:main-theorem} implies \cite[Corollaire~15.3]{borel:groupes-arithmetiques} and therefore it implies \cref{borel-siegel-finite}, by the same argument as in the proof of \cite[Th\'eor\`eme 15.4]{borel:groupes-arithmetiques}.
Since our proof of \cref{intro:main-theorem} is independent of Borel's proof of \cite[Corollaire~15.3]{borel:groupes-arithmetiques}, this gives a new proof of \cref{borel-siegel-finite}.

\section{Proof of main height bound} \label{sec:gln:proof}

In this section we prove \cref{intro:main-theorem}.
Most of the section deals with the case of standard Siegel sets in \( \gGL_n \).
At the end we show how to deduce the general statement of \cref{intro:main-theorem} from this case, using \cref{intro:siegel-set-inclusion}.

Thus let \( \gG = \gGL_n \) and let \( \fS \) be a standard Siegel set in~\( \gG \).
As in the statement of \cref{intro:main-theorem}, we are given an element
\[ \gamma \in \fS.\fS^{-1} \cap \gG(\bQ), \]
with \( N = \abs{\det \gamma} \) and with \( D \) denoting the maximum of the denominators of entries of~\( \gamma \).
Since \( \gamma \in \fS.\fS^{-1} \), using the notation from section~\ref{ssec:gln:siegel-sets}, we can write
\begin{equation} \label{eqn:gln:master-gamma}
\gamma = \nu \beta \kappa \alpha^{-1} \mu^{-1}
\end{equation}
with \( \alpha, \beta \in A_t \), \( \mu, \nu \in \Omega_u \) and \( \kappa \in K \).
Rearranging this equation, we obtain
\begin{equation} \label{eqn:gln:master}
\gamma \mu \alpha = \nu \beta \kappa.
\end{equation}

Our aim is to bound the height of \( \gamma \) by a polynomial in \( N \) and~\( D \).
The proof has three stages.
First we compare entries of the diagonal matrices \( \alpha \) and~\( \beta \), showing that \( \alpha_j  \ll  D \beta_i \) for certain pairs of indices \( (i, j) \).
Secondly, we prove that
\begin{equation} \label{eqn:gln:b-ll-a}
\beta_j  \ll  N D^{n - 1} \alpha_i
\end{equation}
whenever \( i \) and~\( j \) lie in the same segment of a certain partition of \( \{ 1, \dotsc, n \} \).
Finally we expand out equation~\eqref{eqn:gln:master-gamma} and use inequality~\eqref{eqn:gln:b-ll-a}.

\subsection{Partitioning the indices}

An important device in the proof of \cref{intro:main-theorem} for standard Siegel sets is a partition of the set of indices \( \{ 1, \dotsc, n \} \) into subintervals which we call ``segments'' (depending on \( \gamma \)).
The \defterm{segments} are defined to be the subintervals of \( \{ 1, \dotsc, n \} \) such that:
\begin{enumerate}[(i)]
\item \( \gamma \) is block upper triangular with respect to the chosen partition;
\item \( \gamma \) is not block upper triangular with respect to any finer partition of \( \{ 1, \dotsc, n \} \) into subintervals.
\end{enumerate}

We define a \defterm{leading entry} to be a pair of indices \( (i, j) \in \{ 1, \dotsc, n \}^2 \) such that \( \gamma_{ij} \) is the leftmost non-zero entry in the \( i \)-th row of~\( \gamma \).

The following lemma describes segments in terms of leading entries.
This lemma also has a converse, which we will not need: if \( i > j \) and there exists a sequence satisfying condition~\eqref{cond:gln:segment-sequence}, then \( i \) and~\( j \) are in the same segment.

\begin{lemma} \label{gln:segment-witnessing-sequence}
If \( i > j \) and \( i \) and \( j \) are in the same segment, then there exists a sequence of leading entries \( (i_1, j_1), \dotsc, (i_s, j_s) \) such that
\begin{equation} \label{cond:gln:segment-sequence} \tag{*}
i \leq i_1, \quad j_p \leq i_{p+1} \text{ for every } p \in \{ 1, \dotsc, s-1 \}, \quad \text{ and } \; j_s \leq j.
\end{equation}
\end{lemma}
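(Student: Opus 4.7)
The plan is to prove the lemma by induction on the ``gap'' \( i - j \), with \( [a, b] \) denoting the unique segment containing both \( i \) and \( j \) (so \( a \leq j < i \leq b \)). The key structural input is the maximality clause in the definition of a segment: for every integer \( c \) with \( a \leq c < b \), the partition obtained by splitting \( [a,b] \) at \( c \) into \( [a,c] \) and \( [c+1,b] \) must fail the block upper triangular condition, so there exist indices \( p \in [c+1, b] \) and \( q \in [a, c] \) with \( \gamma_{pq} \neq 0 \). In particular, the leading entry of row~\( p \) lies in some column \( j_p \leq q \leq c \).

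First, I would apply this observation to the cut \( c = i - 1 \), which is legitimate because \( a \leq j \leq i - 1 < i \leq b \). This produces an index \( p \in [i, b] \) whose leading entry \( (p, j_p) \) satisfies \( j_p \leq i - 1 \). Next, I would show \( j_p \geq a \): since \( p \in [a, b] \), the block upper triangular structure of the original segment partition (relative to whatever segment precedes \( [a,b] \), if any) forces \( \gamma_{p, m} = 0 \) for all \( m < a \), so the leading column of row \( p \) cannot fall below \( a \). Hence \( (p, j_p) \) is a leading entry with \( i \leq p \) and \( a \leq j_p \leq i - 1 \).

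Now I would split into cases. If \( j_p \leq j \), the single leading entry \( (p, j_p) \) already furnishes the required chain with \( s = 1 \), verifying \( i \leq p = i_1 \) and \( j_1 = j_p \leq j \); this also serves as the base case \( i - j = 1 \), in which \( j_p \leq i - 1 = j \) is automatic. Otherwise \( j < j_p \leq i - 1 \), so the pair \( (j_p, j) \) still lies in \( [a, b] \) with \( j_p > j \), but \( j_p - j \leq (i-1) - j < i - j \). Applying the inductive hypothesis to this pair yields a chain \( (i_1', j_1'), \dotsc, (i_{s'}', j_{s'}') \) of leading entries with \( j_p \leq i_1' \), \( j_q' \leq i_{q+1}' \), and \( j_{s'}' \leq j \). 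Prepending \( (p, j_p) \) to this chain gives the required sequence, since \( i \leq p \), \( j_p \leq i_1' \), the internal inequalities are inherited, and the final inequality \( j_{s'}' \leq j \) is preserved.

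There is no serious obstacle here: the argument is essentially a bookkeeping exercise once the segment-maximality property is unpacked into the form ``for each internal cut~\( c \), some row strictly above \( c \) in \( [a,b] \) has its leading entry at or to the left of column~\( c \).'' The only point requiring care is confirming that the indices produced by the recursive step stay inside the segment \( [a, b] \) so that the inductive hypothesis applies; both conditions \( j_p \geq a \) and \( j_p \leq b \) follow from, respectively, the block upper triangular shape across segments and the bound \( j_p \leq i - 1 < b \).
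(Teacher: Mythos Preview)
Your argument is correct and rests on the same structural observation as the paper's proof: the maximality of the segment partition forces, for each internal cut~$c$ of a segment, some row with index $>c$ to have its leading entry at or to the left of column~$c$. The only difference is packaging: the paper takes all $s=i-j$ cuts $k=i,i-1,\dotsc,j+1$ at once, extracts one crossing leading entry for each, and checks directly that the resulting length-$s$ sequence satisfies~\eqref{cond:gln:segment-sequence}; you instead take a single cut at $c=i-1$, obtain one leading entry $(p,j_p)$, and recurse on the shorter gap $(j_p,j)$. Your version requires the extra bookkeeping step $j_p\geq a$ (so that the inductive hypothesis applies within the same segment), which the paper's direct construction avoids, but in exchange your chains are adaptive and can be shorter than $i-j$. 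One terminological slip: in your closing summary, ``some row strictly above~$c$'' should read ``strictly below'' (row index $>c$); the body of your argument has this right.
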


\begin{proof}
First, for each \( k \) such that \( j < k \leq i \),
we show that there exists a leading entry \( (i', j') \) such that \( j' < k \leq i' \).
Because segments give the finest partition according to which \( \gamma \) is block upper triangular, \( \gamma \) cannot be block upper triangular with respect to the partition
\[ \{ 1, \dotsc, k-1 \}, \{ k, \dotsc, n \}. \]
So there exists some \( i' \geq k \) such that the \( i' \)-th row of \( \gamma \) has a non-zero entry in the first \( k-1 \) columns.
Choosing \( j' \) to be the index of the leftmost non-zero entry in the \( i' \)-th row, we get the desired leading entry with \( j' < k \leq i' \).

Let \( s = i-j \).
For each \( p \) such that \( 1 \leq p \leq s \) we apply the above argument to \( k = i-p+1 \) and get a leading entry \( (i_p, j_p) \) such that \( j_p < i-p+1 \leq i_p \).
The resulting sequence \( (i_1, j_1), \dotsc, (i_s, j_s) \) satisfies condition~\eqref{cond:gln:segment-sequence}.
\end{proof}

We define \( \gQ \) to be the subgroup of \( \gGL_n \) consisting of block upper triangular matrices according to the segments defined above (thus \( \gQ \) depends on \( \gamma \)).
Observe that \( \gQ \) could equivalently be defined as the smallest standard parabolic subgroup of \( \gGL_n \) which contains \( \gamma \).

We define \( \gL \) to be the subgroup of \( \gGL_n \) consisting of block diagonal matrices according to the same partition into segments.
Thus \( \gL \) could equivalently be defined as the Levi subgroup of \( \gQ \) containing the torus of diagonal matrices.

\subsection{Example partitions for \texorpdfstring{\( \gGL_3 \)}{GL3}}

To illustrate the definition of segments and \cref{gln:segment-witnessing-sequence}, we show the various cases which occur for \( \gGL_3 \).
\Cref{tbl:components-gl3} shows classes of matrix in \( \gGL_3 \), depending on the region of zeros adjacent to the bottom left corner of the matrix, and gives the associated partitions of \( \{ 1, 2, 3 \} \) into segments.
Every matrix in \( \gGL_3 \) falls into exactly one of the classes in \cref{tbl:components-gl3}.

\begin{table}[b]
\caption{Partitions into segments for \( \gamma \in \gGL_3 \)}  \label{tbl:components-gl3}
\begin{tabular}{>{\renewcommand{\arraystretch}{1}}cc|>{\renewcommand{\arraystretch}{1}}cc}
   \( \gamma \)
 & Segments
 & \( \gamma \)
 & Segments

\\ \hline
&&&\\
   \( \begin{pmatrix}
	    *     & \cdot & \cdot
	 \\ 0     & *     & \cdot
	 \\ 0     & 0     & *
	 \end{pmatrix} \)
 & \( \{ 1 \} \), \( \{ 2 \} \), \( \{ 3 \} \)

 & \( \begin{pmatrix}
	    \cdot & \cdot & \cdot
	 \\ \cdot & \cdot & \cdot
	 \\ *     & \cdot & \cdot
	 \end{pmatrix} \)
 & \( \{ 1, 2, 3 \} \)
\\
&&&\\
   \( \begin{pmatrix}
	    *     & \cdot & \cdot
	 \\ 0     & \cdot & \cdot
	 \\ 0     & *     & \cdot
	 \end{pmatrix} \)
 & \( \{ 1 \} \), \( \{ 2, 3 \} \)

 & \( \begin{pmatrix}
	    \cdot & \cdot & \cdot
	 \\ *     & \cdot & \cdot
	 \\ 0     & *     & \cdot
	 \end{pmatrix} \)
 & \( \{ 1, 2, 3 \} \)
\\
&&&\\
  \( \begin{pmatrix}
	    \cdot & \cdot & \cdot
	 \\ *     & \cdot & \cdot
	 \\ 0     & 0     & *
	 \end{pmatrix} \)
 & \( \{ 1, 2 \} \), \( \{ 3 \} \)

 &&
\end{tabular}
\end{table}

In \cref{tbl:components-gl3}, \( * \) represents an entry which must be non-zero, while \( \cdot \) represents an entry which may be either zero or non-zero.
Every entry to the left of a~\( * \) is zero, so each~\( * \) is a leading entry.
For rows which do not contain a~\( * \), there is not enough information to determine the leading entry; these rows' leading entries rows are not important for \cref{gln:segment-witnessing-sequence}.

Comparing the two classes of matrices in the right-hand column of \cref{tbl:components-gl3}, we see that it is possible for matrices to have different patterns of zeros adjacent to the bottom left corner, yet still be associated with the same partition of \( \{ 1, 2, 3 \} \).
This is related to the fact that matrices in the lower class of this column do not form a subgroup of \( \gGL_3 \): the smallest standard parabolic subgroup containing such a matrix is the full group~\( \gGL_3 \), the same as for the upper class.

On the other hand, the difference between the two classes in the right-hand column of \cref{tbl:components-gl3} is important for finding sequences of leading entries as in \cref{gln:segment-witnessing-sequence}.
In the upper class of this column, the sequence consisting just of the leading entry \( (3, 1) \)
satisfies condition~\eqref{cond:gln:segment-sequence} for every pair~\( (i, j) \).
In the lower class, in order to construct a sequence satisfying condition~\eqref{cond:gln:segment-sequence} which goes from \( i=3 \) to \( j=1 \), we need both the leading entries \( (3,2) \) and \( (2,1) \).

\subsection{Ratios between diagonal matrices (leading entries)}

In the first stage of the proof, we compare \( \alpha_j \) with \( \beta_i \) when \( (i, j) \) is a leading entry.
This is based on comparing the lengths of the \( i \)-th rows on either side of equation~\eqref{eqn:gln:master}.

\begin{lemma} \label{gln:a-ll-b-leading-entry}
If \( (i, j) \) is a leading entry for \( \gamma \), then
\[
\alpha_j  \ll  D \beta_i.
\]
\end{lemma}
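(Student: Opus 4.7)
The plan is to compare the $(i,j)$-entry of both sides of the identity \( \gamma\mu\alpha = \nu\beta\kappa \) from equation~\eqref{eqn:gln:master}, exploiting the defining property of a leading entry on the left and the orthogonality of \( \kappa \) on the right.

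First I would unwind the left-hand side. Since \( \alpha \) is diagonal and \( \mu \) is upper unitriangular,
\[ (\gamma\mu\alpha)_{ij} \;=\; \alpha_j \sum_{k \leq j} \gamma_{ik}\, \mu_{kj}. \]
By the definition of a leading entry, \( \gamma_{ik} = 0 \) for all \( k < j \), and \( \mu_{jj} = 1 \), so this sum collapses to \( \gamma_{ij}\alpha_j \). Because \( \gamma_{ij} \) is a nonzero rational whose denominator divides a number of size at most \( D \), we have \( \abs{\gamma_{ij}} \geq 1/D \), and therefore
\[ \abs{(\gamma\mu\alpha)_{ij}} \;\geq\; \alpha_j / D. \]

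Next I would bound the right-hand side. Since \( \nu \) is upper unitriangular with off-diagonal entries bounded by \( u \), the \( i \)-th row of \( \nu\beta \) is
\( (0, \ldots, 0, \beta_i, \nu_{i,i+1}\beta_{i+1}, \ldots, \nu_{in}\beta_n) \).
The condition \( \beta \in A_t \) gives \( \beta_{k+1} \leq \beta_k/t \) for every \( k \), so iterating yields \( \beta_k \leq t^{-(k-i)} \beta_i \ll \beta_i \) for all \( k \geq i \). Hence the Euclidean norm of the \( i \)-th row of \( \nu\beta \) is \( \ll \beta_i \). Because \( \kappa \in K \) is orthogonal, right-multiplication by \( \kappa \) preserves row norms, so the \( i \)-th row of \( \nu\beta\kappa \) also has Euclidean norm \( \ll \beta_i \); in particular,
\[ \abs{(\nu\beta\kappa)_{ij}} \;\ll\; \beta_i. \]

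Combining the two bounds through the equality \( (\gamma\mu\alpha)_{ij} = (\nu\beta\kappa)_{ij} \) yields \( \alpha_j/D \ll \beta_i \), i.e.\ \( \alpha_j \ll D\beta_i \), which is the desired inequality. There is no serious obstacle here: the argument is essentially a direct matrix computation. The only point requiring mild care is keeping track of the geometric series in \( 1/t \) when bounding the \( \beta_k \) for \( k > i \) in terms of \( \beta_i \), and confirming that the leading-entry hypothesis is exactly what is needed to annihilate the contribution of \( \mu \) below the \( j \)-th column.
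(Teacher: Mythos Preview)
Your argument is correct and follows essentially the same approach as the paper: both compare the \(i\)-th rows of \(\gamma\mu\alpha = \nu\beta\kappa\), use the leading-entry hypothesis to isolate \(\gamma_{ij}\alpha_j\) on the left and the bound \(\abs{\gamma_{ij}} \geq 1/D\), and use orthogonality of \(\kappa\) together with \(\beta_k \ll \beta_i\) for \(k \geq i\) to bound the right side by a constant times \(\beta_i\). The only cosmetic difference is that the paper compares the full row norms on both sides while you compare the single \((i,j)\)-entry on the left against the row-norm bound on the right.
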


\begin{proof}
Recall equation~\eqref{eqn:gln:master}:
\[ \gamma \mu \alpha  =  \nu \beta \kappa. \]
Because \( \kappa \in \gO_n(\bR) \), multiplying by \( \kappa \) on the right does not change the length of a row vector.
Hence expanding out the lengths of the \( i \)-th rows on either side of \eqref{eqn:gln:master} gives
\begin{equation} \label{eqn:gln:row-length-master}
\sum_{p=1}^n \left( \sum_{q=1}^n \gamma_{iq} \mu_{qp} \right)^2 \alpha_p^2  =  \sum_{p=1}^n \nu_{ip}^2 \beta_p^2.
\end{equation}

Look first at the right hand side of equation~\eqref{eqn:gln:row-length-master}, comparing it to \( \beta_i^2 \).
Because \( \nu \) is upper triangular, non-zero terms on the right hand side of equation~\eqref{eqn:gln:row-length-master} must have \( p \geq i \)
and hence (by the definition of \( A_t \)) \( \beta_p \ll \beta_i \).
Since \( \nu \) is in the fixed compact set \( \Omega_u \), there is a uniform bound for the entries \( \nu_{ip} \).
Thus we get
\begin{equation} \label{ineqn:gln:nu-beta-length}
\sum_{p=1}^n \nu_{ip}^2 \beta_p^2  \ll  \beta_i^2.
\end{equation}

Now look at the left hand side of equation~\eqref{eqn:gln:row-length-master}, comparing it to \( \alpha_j^2 \).
We pull out the \( p = j \) term.
Because squares are nonnegative, we have
\begin{equation} \label{ineqn:gln:p-simplify-row-master-lhs}
\left( \sum_{q=1}^n \gamma_{iq} \mu_{qj} \right)^2 \alpha_j^2  \leq  \sum_{p=1}^n \left( \sum_{q=1}^n \gamma_{iq} \mu_{qp} \right)^2 \alpha_p^2.
\end{equation}

Because \( (i, j) \) is a leading entry, if \( \gamma_{iq} \neq 0 \) then \( q \geq j \).
Because \( \mu \) is upper triangular, if \( \mu_{qj} \neq 0 \) then \( q \leq j \).
Combining these facts, the only non-zero term on the left hand side of~\eqref{ineqn:gln:p-simplify-row-master-lhs} is the term with \( q = j \).
In other words,
\begin{equation} \label{eqn:gln:gamma-mu-row-length}
\gamma_{ij}^2 \mu_{jj}^2 \alpha_j^2  =  \left( \sum_{q=1}^n \gamma_{iq} \mu_{qj} \right)^2 \alpha_j^2.
\end{equation}

Because \( \mu \in \Omega_u \), we have \( \mu_{jj} = 1 \).
Because \( (i, j) \) is a leading entry, \( \gamma_{ij} \neq 0 \).
Because entries of \( \gamma \) are rational numbers with denominator at most \( D \), this implies that \( \abs{\gamma_{ij}} \geq D^{-1} \).
Combining these facts, we get
\begin{equation} \label{ineqn:gln:gm-squared}
D^{-2}  \leq  \gamma_{ij}^2 \mu_{jj}^2.
\end{equation}

Using successively the inequalities and equations \eqref{ineqn:gln:gm-squared}, \eqref{eqn:gln:gamma-mu-row-length}, \eqref{ineqn:gln:p-simplify-row-master-lhs}, \eqref{eqn:gln:row-length-master} and \eqref{ineqn:gln:nu-beta-length} gives
\[
D^{-2} \alpha_j^2  \ll  \beta_i^2.
\qedhere
\]
\end{proof}

\subsection{Ratios between diagonal matrices (in each segment)}

In the second stage of the proof of \cref{intro:main-theorem}, we prove a series of inequalities comparing entries of \( \alpha \) and \( \beta \).
This concludes with an inequality between \( \alpha_i \) and \( \beta_j \) valid whenever \( i \) and \( j \) are in the same segment.
(Note that the final inequality, \cref{gln:b-ll-a-same-segment}, is in the opposite direction to the starting point of \cref{gln:a-ll-b-leading-entry}.)

\begin{lemma} \label{gln:a-ll-b-same-index}
For all \( k \in \{ 1, \dotsc, n \} \),
\[
\alpha_k  \ll  D \beta_k.
\]
\end{lemma}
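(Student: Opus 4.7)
The plan is to bootstrap from \Cref{gln:a-ll-b-leading-entry} via the chain
\[ \alpha_k \;\ll\; \alpha_j \;\ll\; D \beta_i \;\ll\; D \beta_k, \]
where $(i, j)$ is a leading entry of $\gamma$ satisfying $j \leq k \leq i$. The two outer inequalities come from the ``almost-decreasing'' behaviour of diagonal elements of $A_t$: iterating the defining condition $\alpha_j / \alpha_{j+1} \geq t$ gives $\alpha_k \leq t^{-(k-j)} \alpha_j$ whenever $j \leq k$, and since $n$ and $t$ are fixed this yields $\alpha_k \ll \alpha_j$; analogously $\beta_i \ll \beta_k$ when $i \geq k$. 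The middle inequality is exactly the content of \Cref{gln:a-ll-b-leading-entry}. Everything thus hinges on producing, for each~$k$, a leading entry ``covering'' $k$ in the sense that $j \leq k \leq i$.

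The main step is therefore the following \emph{covering claim}: for every $k \in \{1, \ldots, n\}$ there exists a leading entry $(i, j)$ of $\gamma$ with $j \leq k \leq i$. I would prove this by contradiction. If no such leading entry exists, then every row of index $i \geq k$ has its leading entry in a column strictly greater than~$k$, which forces $\gamma_{i,1} = \cdots = \gamma_{i,k} = 0$ for all $i \geq k$. But then the first~$k$ columns of $\gamma$ are supported on the rows $1, \ldots, k-1$, placing $k$ column vectors of~$\gamma$ inside a $(k-1)$-dimensional subspace of~$\bR^n$ and contradicting the invertibility of~$\gamma$.

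With the covering claim in hand, for each $k$ I pick a covering leading entry $(i, j)$ and combine the three ingredients above to conclude $\alpha_k \ll D \beta_k$. I do not expect any substantive obstacle: the covering claim is a short linear-algebra argument, and the $A_t$-estimates are immediate from the definition of $A_t$. The only subtlety worth flagging is the one-sided nature of the $A_t$ inequalities, which is precisely why the covering leading entry must satisfy both $j \leq k$ (for the $\alpha$ comparison) and $i \geq k$ (for the $\beta$ comparison) simultaneously.
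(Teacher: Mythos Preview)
Your proposal is correct and follows essentially the same approach as the paper: both establish the chain $\alpha_k \ll \alpha_j \ll D\beta_i \ll D\beta_k$ using the definition of $A_t$ for the outer inequalities and \Cref{gln:a-ll-b-leading-entry} for the middle, and both prove the ``covering claim'' via the invertibility of~$\gamma$. The only cosmetic difference is that the paper argues directly (pick a row $i \geq k$ with a nonzero entry in the first $k$ columns, then take its leftmost nonzero entry) while you phrase the same observation as a proof by contradiction.
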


\begin{proof}
The key point is that there exists a leading entry \( (i, j) \) such that
\[ j \leq k \leq i. \]
To prove this, observe that since \( \gamma \) is invertible there must be some \( i \geq k \) such that the \( i \)-th row of \( \gamma \) contains a non-zero entry in or to the left of the \( k \)-th column.
Choosing \( j \) to be the index of the leftmost non-zero entry in the \( i \)-th row of~\( \gamma \) gives the required leading entry.

Taking such a leading entry \( (i, j) \), we can use \cref{gln:a-ll-b-leading-entry} (for the middle inequality) and the definition of~\( A_t \) (for the outer inequalities) to prove that
\[
\alpha_k  \ll  \alpha_j  \ll  D \beta_i  \ll  D \beta_k.
\qedhere
\]
\end{proof}

\begin{lemma} \label{gln:b-ll-a-multi-index}
For every set \( J \subset \{ 1, \dotsc, n \} \),
\[
\prod_{j \in J} \beta_j  \ll  N D^{n-\# J} \prod_{j \in J} \alpha_j.
\]
\end{lemma}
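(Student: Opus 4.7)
The plan is to reduce the multi-index inequality to the single-index bound already proved in Lemma \ref{gln:a-ll-b-same-index}, using the determinant of $\gamma$ to handle the product.

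First I would take determinants in the identity $\gamma = \nu \beta \kappa \alpha^{-1} \mu^{-1}$. Since $\nu, \mu \in \Omega_u$ are unipotent (unit diagonal), we have $\det \nu = \det \mu = 1$; since $\kappa \in K = \gO_n(\bR)$, we have $\abs{\det \kappa} = 1$; and $\alpha,\beta \in \gS(\bR)^+$ are positive diagonal, so $\det \alpha = \prod_i \alpha_i$ and $\det \beta = \prod_i \beta_i$ are positive. Therefore
\[
\prod_{i=1}^n \beta_i \;=\; N \prod_{i=1}^n \alpha_i.
\]

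Secondly, writing $J^c = \{1,\dotsc,n\} \setminus J$, I would isolate the product over $J$ by dividing the full identity above by $\prod_{j \in J^c} \beta_j$:
\[
\prod_{j \in J} \beta_j \;=\; N \prod_{j \in J} \alpha_j \cdot \prod_{j \in J^c} \frac{\alpha_j}{\beta_j}.
\]
Finally, Lemma \ref{gln:a-ll-b-same-index} gives $\alpha_k / \beta_k \ll D$ for each index $k$, so multiplying over $k \in J^c$ (where the implied constants multiply, yielding a new constant depending only on $\gG$, $\rho$, $\fS$) gives the factor $D^{n-\#J}$, and the claimed bound follows.

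There is really no obstacle here: the lemma is essentially a bookkeeping consequence of the determinant equation and Lemma \ref{gln:a-ll-b-same-index}. The only point to watch is that we may absorb the finitely many multiplicative constants coming from the $\#J^c \le n$ applications of the $\ll$-inequality into a single constant, which is harmless because $n$ is fixed.
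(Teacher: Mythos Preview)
Your proof is correct and follows essentially the same approach as the paper: use the determinant identity \( \det\beta = N\det\alpha \) obtained from \eqref{eqn:gln:master}, and apply Lemma~\ref{gln:a-ll-b-same-index} to each index in the complement \( J^c \) to produce the factor \( D^{n-\#J} \). The paper organises the calculation by multiplying through by \( \det\alpha \) rather than dividing by \( \prod_{j\in J^c}\beta_j \), but the content is the same.
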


\begin{proof}
Because \( \alpha \) and~\( \beta \) are diagonal matrices with positive diagonal entries,
\begin{align}
         \prod_{j \in J} \beta_j \cdot \det \alpha
  &   =  \prod_{j \in J} \beta_j \cdot \prod_{k=1}^n \alpha_k
\notag
\\& \ll  D^{n-\# J} \prod_{j \in J} \alpha_j \cdot \prod_{k=1}^n \beta_k
      =  D^{n-\# J} \prod_{j \in J} \alpha_j \cdot \det \beta
\label{eqn:gln:prod-a-b-compare}
\end{align}
where the middle inequality uses \cref{gln:a-ll-b-same-index} for all indices~\( k \in \{ 1, \dotsc, n \} \setminus J \).

All of \( \mu \), \( \nu \) and \( \kappa \) have determinant \( \pm 1 \).
Hence equation~\eqref{eqn:gln:master} implies that
\[ \det \beta  =  N \det \alpha. \]
Combining this with inequality~\eqref{eqn:gln:prod-a-b-compare} proves the lemma.
\end{proof}

\begin{lemma} \label{gln:b-ll-a-same-segment}
If \( i \) and \( j \) are in the same segment, then
\[
\beta_j  \ll  N D^{n - 1} \alpha_i.
\]
\end{lemma}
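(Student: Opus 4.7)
The plan is to split into cases \( i \leq j \) and \( i > j \). The case \( i \leq j \) is immediate: applying \cref{gln:b-ll-a-multi-index} with \( J = \{j\} \) gives \( \beta_j \ll ND^{n-1}\alpha_j \), and iterating the defining inequality of \( A_t \) yields \( \alpha_j \leq t^{-(j-i)}\alpha_i \ll \alpha_i \), so the bound follows.

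For \( i > j \), \cref{gln:segment-witnessing-sequence} provides a sequence of leading entries \( (i_1, j_1), \dotsc, (i_s, j_s) \) with \( i \leq i_1 \), \( j_p \leq i_{p+1} \) for \( 1 \leq p \leq s-1 \), and \( j_s \leq j \). I would first reduce to the case in which \( i_1 > i_2 > \dotsb > i_s > j \): whenever \( i_p \leq i_{p+1} \) for some \( p < s \), drop \( (i_p, j_p) \) (the chain condition is preserved because \( j_{p-1} \leq i_p \leq i_{p+1} \), or \( i \leq i_p \leq i_{p+1} \) if \( p = 1 \)); and if \( i_s \leq j \), drop \( (i_s, j_s) \) because then the previous entry satisfies the new last-condition \( j_{s-1} \leq i_s \leq j \). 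Iterating these reductions makes \( j, i_1, \dotsc, i_s \) pairwise distinct.

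Set \( J = \{j, i_1, \dotsc, i_s\} \), so that \( \#J = s+1 \), and apply \cref{gln:b-ll-a-multi-index}:
\[
\beta_j \prod_{p=1}^s \beta_{i_p} \ll N D^{n-s-1}\, \alpha_j \prod_{p=1}^s \alpha_{i_p}.
\]
I would then estimate the right-hand side by combining monotonicity of \( \alpha \) on \( A_t \) with \cref{gln:a-ll-b-leading-entry} applied at each leading entry: one has \( \alpha_{i_1} \ll \alpha_i \) from \( i_1 \geq i \); \( \alpha_{i_p} \ll \alpha_{j_{p-1}} \ll D\beta_{i_{p-1}} \) from \( i_p \geq j_{p-1} \) and \cref{gln:a-ll-b-leading-entry} for \( 2 \leq p \leq s \); and \( \alpha_j \ll \alpha_{j_s} \ll D\beta_{i_s} \) similarly. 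Multiplying these bounds gives \( \alpha_j \prod_{p=1}^s \alpha_{i_p} \ll D^s \alpha_i \prod_{p=1}^s \beta_{i_p} \), and substituting back and cancelling the factor \( \prod_p \beta_{i_p} \) from both sides leaves \( \beta_j \ll N D^{n-1}\alpha_i \).

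The main obstacle is making the counting of \( D \)-factors balance. A naive strategy that iterates \cref{gln:b-ll-a-multi-index} on singletons together with \cref{gln:a-ll-b-leading-entry} along the witnessing sequence accumulates a factor of \( N \) at every step, producing an unusable \( N^s \) exponent. The resolution is the single application of \cref{gln:b-ll-a-multi-index} above, on a set \( J \) containing \( j \) together with all \( s \) distinct row indices \( i_p \): the \( s \) factors of \( D \) produced by \cref{gln:a-ll-b-leading-entry} then balance precisely against the \( D^{n-s-1} \) in the statement of \cref{gln:b-ll-a-multi-index}. The preliminary reduction to distinct \( i_p \) is exactly what makes \( J \) have the correct size \( s+1 \) for this balance to hold.
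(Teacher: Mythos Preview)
Your proof is correct and follows essentially the same approach as the paper's: the same case split, the same single application of \cref{gln:b-ll-a-multi-index} to the set \( \{j, i_1, \dotsc, i_s\} \), and the same telescoping via \cref{gln:a-ll-b-leading-entry} along the witnessing sequence to balance the \( D \)-exponents. The only cosmetic difference is in the reduction to distinct indices: the paper deletes the subsequence between any two equal \( i_p \)'s (and truncates if some \( i_p = j \)), whereas you iteratively drop a term whenever the monotonicity \( i_1 > \dotsb > i_s > j \) fails; both achieve the same goal.
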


\begin{proof}
If \( i \leq j \), then we apply \cref{gln:b-ll-a-multi-index} to the singleton \( \{ j \} \) to obtain
\[
\beta_j  \ll  N D^{n-1} \alpha_j.
\]
Combining this with \( \alpha_j  \ll  \alpha_i \) proves the lemma in the case \( i \leq j \).

Otherwise, \( i > j \) so we can use \cref{gln:segment-witnessing-sequence} to find a sequence of leading entries \( (i_1, j_1), \dotsc, (i_s, j_s) \) satisfying condition~\eqref{cond:gln:segment-sequence}.
We may assume that \( i_1, \dotsc, i_s \) are distinct -- otherwise we could simply delete the subsequence between two occurrences of the same \( i_p \).
Similarly, we may assume that none of \( i_1, \dotsc, i_s \) is equal to \( j \).

Therefore we can apply \cref{gln:b-ll-a-multi-index} to the set \( \{ i_1, \dotsc, i_s, j \} \) to get
\begin{equation} \label{ineqn:apply-multi-index}
\beta_j \prod_{p=1}^s \beta_{i_p}  \ll  N D^{n-(s+1)} \alpha_j \prod_{p=1}^s \alpha_{i_p}.
\end{equation}

For each \( p \in \{ 1, \dotsc, s-1 \} \), the fact that \( j_p \leq i_{p+1} \) and \cref{gln:a-ll-b-leading-entry} tell us that
\[
\alpha_{i_{p+1}}  \ll  \alpha_{j_p}  \ll  D \beta_{i_p}.
\]
Similarly because \( j_s \leq j \) we have
\[
\alpha_j  \ll  \alpha_{j_s}  \ll  D \beta_{i_s}.
\]
Multiplying these inequalities together and also multiplying by \( \beta_j \) gives the first inequality below, while \eqref{ineqn:apply-multi-index} gives the second:
\[
     \beta_j \alpha_j \prod_{p=2}^{s} \alpha_{i_p}  \ll  D^s \beta_j \prod_{p=1}^{s} \beta_{i_p}
\ll  N D^{n-1} \alpha_j \prod_{p=1}^s \alpha_{i_p}.
\]
Cancelling \( \alpha_j \prod_{p=2}^s \alpha_{i_p} \) shows that
\[ \beta_j  \ll  N D^{n-1} \alpha_{i_1}. \]
Since \( i \leq i_1 \), we have \( \alpha_{i_1}  \ll  \alpha_i \).
This completes the proof of the lemma.
\end{proof}

\subsection{Conclusion of proof for standard Siegel sets}

In the final stage of the proof, we expand out equation~\eqref{eqn:gln:master-gamma}.
When we do this, we get terms of the form \( \beta_p \kappa_{pq} \alpha_q^{-1} \).
In order to bound this using \cref{gln:b-ll-a-same-segment}, we need to know that \( \kappa_{pq} \) is zero if \( p \) and~\( q \) are not in the same segment.
In other words we have to begin by proving that \( \kappa \) is in the group~\( \gL(\bR) \) of block diagonal matrices.

\pagebreak

\begin{lemma} \label{gln:kappa-in-L} \label{gln:parabolic-intersect-compact}
\( \kappa \in \gL(\bR) \).
\end{lemma}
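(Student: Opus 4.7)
The plan is to rearrange equation~\eqref{eqn:gln:master} to show that $\kappa$ lies in $\gQ(\bR) \cap \gO_n(\bR)$, and then to prove that this intersection coincides with $\gL(\bR) \cap \gO_n(\bR)$.

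First, I would observe that all of the factors on the left-hand side of \eqref{eqn:gln:master} lie in $\gQ(\bR)$: by construction $\gQ$ is the smallest standard parabolic subgroup of $\gGL_n$ containing $\gamma$, so $\gamma \in \gQ(\bR)$; the matrices $\mu, \nu$ are upper triangular and hence lie in $\gP(\bR) \subset \gQ(\bR)$; and $\alpha, \beta$ are diagonal, so they lie in $\gL(\bR) \subset \gQ(\bR)$. Similarly $\nu, \beta$ on the right-hand side lie in $\gQ(\bR)$. Solving \eqref{eqn:gln:master} for $\kappa$ gives
\[ \kappa = \beta^{-1} \nu^{-1} \gamma \mu \alpha \in \gQ(\bR), \]
so $\kappa \in \gQ(\bR) \cap K = \gQ(\bR) \cap \gO_n(\bR)$.

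The core of the argument is then the purely linear-algebraic claim that any orthogonal matrix which is block upper triangular (with respect to any fixed partition of $\{1,\dotsc,n\}$ into intervals) is in fact block diagonal. I would prove this by induction on the number of blocks: writing $\kappa = \begin{pmatrix} A & B \\ 0 & C \end{pmatrix}$ where $A, C$ are square and $C$ corresponds to the final block, the identity $\kappa \kappa^T = I$ expands to $CC^T = I$ (so $C$ is orthogonal, hence invertible) and $BC^T = 0$, which forces $B = 0$. Peeling off the last block and iterating gives block diagonality. Applying this to $\kappa$ with respect to the segment partition shows $\kappa \in \gL(\bR)$, as required.

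The only non-routine point is the block-diagonality step, but it is a short direct computation from $\kappa\kappa^T = I$, so there is no substantive obstacle. The payoff, which motivates the lemma's placement here, is that in the concluding stage of the proof of \cref{intro:main-theorem} one can expand $\gamma = \nu\beta\kappa\alpha^{-1}\mu^{-1}$ entry by entry and know that each term $\beta_p \kappa_{pq} \alpha_q^{-1}$ with nonzero $\kappa_{pq}$ has $p$ and $q$ in the same segment, so that \cref{gln:b-ll-a-same-segment} applies to bound it.
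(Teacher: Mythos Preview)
Your proof is correct and follows essentially the same approach as the paper: show that \( \kappa \in \gQ(\bR) \) because all the other factors in \eqref{eqn:gln:master} lie in \( \gQ(\bR) \), then argue that an orthogonal block upper triangular matrix must be block diagonal. The paper's justification for the latter step is marginally slicker---it simply observes that the inverse-transpose of a block upper triangular matrix is block lower triangular, and an orthogonal matrix equals its own inverse-transpose---but your inductive block computation is equally valid.
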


\begin{proof}
By construction, \( \gamma \), \( \mu \), \( \alpha \), \( \nu \), \( \beta \) are all in the group~\( \gQ(\bR) \) of block upper triangular matrices.
Hence equation~\eqref{eqn:gln:master-gamma} tells us that also \( \kappa \in \gQ(\bR) \).

If a matrix is both block upper triangular and orthogonal, then it is block diagonal according to the same blocks (because the inverse-transpose of a block upper triangular matrix is block lower triangular).
In other words,
\[ \gQ(\bR) \cap K  \subset  \gL(\bR). \]

This proves the lemma.
\end{proof}

\begin{lemma} \label{gln:gamma-bound}
For all \( i, j \in \{ 1, \dots, n \} \), we have
\[
\abs{\gamma_{ij}}  \ll  N D^{n - 1}.
\]
\end{lemma}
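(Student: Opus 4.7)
The plan is to expand the factorisation $\gamma = \nu \beta \kappa \alpha^{-1} \mu^{-1}$ from \eqref{eqn:gln:master-gamma} entry-by-entry and bound each non-zero term using \cref{gln:b-ll-a-same-segment}. Since $\beta$ and $\alpha^{-1}$ are diagonal, the expansion collapses to a double sum
\[ \gamma_{ij} = \sum_{p, r} \nu_{ip} \, \beta_p \, \kappa_{pr} \, \alpha_r^{-1} \, (\mu^{-1})_{rj}. \]
First I would record the constraints on which indices $(p, r)$ contribute a non-zero term: because $\nu \in \Omega_u$ is upper triangular, we need $p \geq i$; because $\mu \in \Omega_u$ (so $\mu^{-1}$ is also upper triangular with uniformly bounded entries, $\Omega_u$ being compact), we need $r \leq j$; and crucially, by \cref{gln:kappa-in-L}, the matrix $\kappa$ lies in $\gL(\bR)$, so $\kappa_{pr} = 0$ unless $p$ and $r$ lie in the same segment.

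Next I would bound the factors in each surviving term. Since $\nu \in \Omega_u$ and $\mu^{-1}$ ranges over the compact inverse-image of $\Omega_u$, we have $|\nu_{ip}| \ll 1$ and $|(\mu^{-1})_{rj}| \ll 1$. Since $\kappa$ is orthogonal, $|\kappa_{pr}| \leq 1$. The only dangerous factor is $\beta_p \alpha_r^{-1}$, but for exactly the pairs $(p, r)$ that contribute, $p$ and $r$ lie in a common segment, so \cref{gln:b-ll-a-same-segment} yields
\[ \beta_p \alpha_r^{-1} \ll N D^{n-1}. \]

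Finally, summing over the finitely many pairs $(p, r) \in \{1, \dotsc, n\}^2$ gives the desired bound $|\gamma_{ij}| \ll N D^{n-1}$. I do not expect a serious obstacle here: the real content of the argument has already been done in \cref{gln:b-ll-a-same-segment,gln:kappa-in-L}. The slight subtlety worth emphasising is that \cref{gln:kappa-in-L} is indispensable, because it is precisely the block-diagonality of $\kappa$ that confines the non-zero contributions to index pairs within a single segment, where \cref{gln:b-ll-a-same-segment} can be invoked. Without this step the terms with $p$, $r$ in different segments would be completely unconstrained.
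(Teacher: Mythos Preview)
Your proposal is correct and matches the paper's own proof essentially step for step: expand \(\gamma = \nu\beta\kappa\alpha^{-1}\mu^{-1}\), use \cref{gln:kappa-in-L} to kill the off-block entries of \(\kappa\), apply \cref{gln:b-ll-a-same-segment} to bound \(\beta_p\alpha_r^{-1}\) on the surviving terms, and absorb \(\nu\), \(\mu^{-1}\), \(\kappa\) into the implied constant by compactness. The triangularity constraints \(p\geq i\), \(r\leq j\) you note are harmless but actually unnecessary, since the bound \(\beta_p\kappa_{pr}\alpha_r^{-1}\ll ND^{n-1}\) holds for every pair \((p,r)\) regardless.
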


\begin{proof}
We expand out the matrix product in~\eqref{eqn:gln:master-gamma}, which we recall:
\[ \gamma = \nu \beta \kappa \alpha^{-1} \mu^{-1}. \]

Because \( \alpha \) and \( \beta \) are diagonal, the \( pq \)-th entry of \( \beta \kappa \alpha^{-1} \) is equal to
\[ \beta_p \kappa_{pq} \alpha_q^{-1}. \]
If \( p \) and \( q \) are not in the same segment, then \cref{gln:kappa-in-L} tells us that \( \kappa_{pq} = 0 \).
On the other hand if \( p \) and \( q \) are in the same segment, then we can apply \cref{gln:b-ll-a-same-segment} to bound \( \beta_p \alpha_q^{-1} \).
Furthermore, because \( \kappa \) is in the compact subgroup~\( K \), there is a uniform upper bound for entries of~\( \kappa \).
We conclude that
\begin{equation} \label{eqn:gln:bka}
\beta_p \kappa_{pq} \alpha_q^{-1}  \ll  N D^{n - 1}.
\end{equation}

Because \( \mu \) and \( \nu \) are in the fixed compact set~\( \Omega_u \) and because all elements of \( \Omega_u \) are invertible, there is a uniform upper bound for entries of \( \nu \) and of \( \mu^{-1} \).
Thus inequality~\eqref{eqn:gln:bka} together with equation~\eqref{eqn:gln:master-gamma} implies the lemma. 
\end{proof}

To complete the proof of \cref{intro:main-theorem} for standard Siegel sets in \( \gGL_n \), we just have to note that the definition of \( \rH(\gamma) \) implies that
\[ \rH(\gamma)  \leq  D \max(1, \abs{\gamma_{ij}}) \]
where the maximum is over all indices \( (i, j) \in \{ 1, \dotsc, n \}^2 \).
Hence \cref{gln:gamma-bound} implies that
\[
\rH(\gamma)  \leq  \max(D, \, \newC{height-bound-concl-multiplier} N D^n)
\]
where \( \refC{height-bound-concl-multiplier} \) denotes the implied constant from \cref{gln:gamma-bound}.

\subsection{Deducing general case from standard Siegel sets}
\label{ssec:gln:reduction}

To complete the proof of \cref{intro:main-theorem}, we deduce the general statement from the case of standard Siegel sets in~\( \gGL_n \).
This has two steps.
\Cref{gln:reduce-to-standard-siegel-set} allows us to generalise from standard Siegel sets to arbitrary Siegel sets in~\( \gGL_n \).
\Cref{intro:siegel-set-inclusion} (proved in section~\ref{sec:subgroups}) allows us to generalise from \( \gGL_n \) to arbitrary reductive groups~\( \gG \).

\begin{lemma} \label{gln:reduce-to-standard-siegel-set}
Let \( \fS \) be a Siegel set in \( \gGL_n(\bR) \).
Then there exist \( \gamma \in \gGL_n(\bQ) \) and \( \sigma \in \gGL_n(\bR) \) such that \( \gamma^{-1}.\fS.\gamma \sigma \) is contained in a standard Siegel set.
\end{lemma}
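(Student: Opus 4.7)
The strategy is a two-stage reduction to a standard Siegel set: first conjugate by $\gamma \in \gGL_n(\bQ)$ to standardise the Borel subgroup, then right-multiply by $\sigma \in \gGL_n(\bR)$ to standardise the torus and the maximal compact subgroup.

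\emph{Stage 1 (Borel).} Since $\gGL_n(\bQ)$ acts transitively on complete flags in $\bQ^n$, all Borel $\bQ$-subgroups of $\gGL_n$ are $\gGL_n(\bQ)$-conjugate. Choose $\gamma \in \gGL_n(\bQ)$ with $\gamma^{-1}\gP\gamma = \gP_0$, the standard upper-triangular Borel. Then $\gamma^{-1}\fS\gamma$ is a Siegel set with respect to the triple $(\gP_0, \gS', K')$ where $\gS' = \gamma^{-1}\gS\gamma$ and $K' = \gamma^{-1}K\gamma$.

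\emph{Stage 2 (explicit form of $\gS'$ and $K'$).} By the definition of a Siegel triple, $\gS'$ is $\gP_0(\bR)$-conjugate to a maximal $\bQ$-split torus in $\gP_0$. Such tori are Levi factors of $\gP_0$, and by Borel--Tits \cite{borel-tits:groupes-reductifs} they are all $\gU_0(\bQ)$-conjugate to the standard diagonal torus $\gS_0$. Combining, $\gS' = p\gS_0 p^{-1}$ for some $p \in \gP_0(\bR)$; after multiplying $p$ on the right by an appropriate sign matrix (which lies in $\gO_n(\bR)$ and normalises $\gS_0$), we may take $p = u_p s_p$ with $u_p \in \gU_0(\bR)$ and $s_p \in \gS_0(\bR)^+$. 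For the compact subgroup, $K'$ has Cartan involution stabilising $\gS'$; equivalently, the Cartan involution of $p^{-1}K'p$ stabilises $\gS_0$. Writing $p^{-1}K'p = hK_0h^{-1}$ for some $h \in \gGL_n(\bR)$ (where $K_0 = \gO_n(\bR)$) and unwinding the condition, one finds that $hh^T$ lies in $N_{\gGL_n(\bR)}(\gS_0)$ and is symmetric positive definite. Since $N_{\gGL_n(\bR)}(\gS_0) = \gS_0(\bR) \rtimes S_n$ and the only symmetric positive definite elements of this normaliser are in $\gS_0(\bR)^+$ (any nontrivial Weyl-group factor forces off-diagonal pairs of equal entries with zero diagonal entries, precluding positive definiteness), we obtain $hh^T \in \gS_0(\bR)^+$. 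Polar decomposition gives $p^{-1}K'p = dK_0 d^{-1}$ for $d = \sqrt{hh^T} \in \gS_0(\bR)^+$, hence $K' = pdK_0 d^{-1}p^{-1}$.

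\emph{Stage 3 (construct $\sigma$).} Set $\sigma := pd$, so that $\sigma^{-1}K'\sigma = K_0$. Using the identity $K'\sigma = \sigma K_0$,
\[
\gamma^{-1}\fS\gamma\cdot\sigma = \Omega' \cdot A'\sigma \cdot K_0,
\]
where $\Omega' = \gamma^{-1}\Omega\gamma \subset \gU_0(\bR)$ is compact and $A' = pA_{\mathrm{std}}p^{-1}$, with $A_{\mathrm{std}} = \{\alpha \in \gS_0(\bR)^+ : \alpha_j/\alpha_{j+1} \geq t \text{ for all } j\}$ (the cone induced by $A_t$ through the conjugation $p(\cdot)p^{-1}$). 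Since $\gS_0$ is abelian, $A'\sigma = pA_{\mathrm{std}}d = u_p \cdot A_{\mathrm{std}}(s_pd)$, and since $s_pd \in \gS_0(\bR)^+$, the set $A_{\mathrm{std}}(s_pd)$ is contained in the standard cone for the smaller parameter $t' = t\min_j (s_pd)_j/(s_pd)_{j+1} > 0$. The compact set $\Omega'u_p \subset \gU_0(\bR)$ is contained in the standard $\Omega_u$ for $u$ large enough. Thus $\gamma^{-1}\fS\gamma\sigma$ is contained in the standard Siegel set $\Omega_u A_{t'} K_0$.

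\emph{Main obstacle.} The substantive step is Stage 2: translating the abstract Cartan-involution condition into the concrete description $K' = pdK_0 d^{-1}p^{-1}$. This requires identifying the normaliser of $\gS_0$ in $\gGL_n(\bR)$ and exploiting positive-definiteness of $hh^T$ to rule out any nontrivial Weyl-group contribution. Stages 1 and 3 are essentially bookkeeping.
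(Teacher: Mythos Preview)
Your argument is correct and reaches the same conclusion as the paper, but the route through Stage~2 differs in an interesting way.

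The paper proceeds in the opposite order from you: having conjugated $\gP$ to $\gP_0$ by $\gamma$, it first picks \emph{any} $\sigma$ with $\gamma^{-1}K\gamma = \sigma K_0 \sigma^{-1}$, then uses the Iwasawa decomposition $\gGL_n(\bR) = \gU_0(\bR)\,\gS_0(\bR)^+\,K_0$ to arrange $\sigma = \tau\beta \in \gP_0(\bR)$. At that point $(\gP_0,\sigma^{-1}\gamma^{-1}\gS\gamma\sigma,K_0)$ is a Siegel triple, and the paper simply invokes the uniqueness statement \cref{siegel-triple-unique-torus} to conclude $\sigma^{-1}\gamma^{-1}\gS\gamma\sigma = \gS_0$; the final bookkeeping is then identical to your Stage~3. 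By contrast, you first pin down $p$ with $p^{-1}\gS'p = \gS_0$ and then determine $d$ by an explicit analysis of the Cartan involution, using that a symmetric positive-definite monomial matrix must be diagonal. In effect you are reproving, by hand and for $\gGL_n$, the content of \cref{siegel-triple-unique-torus} that the paper takes as a black box. The paper's version is shorter and more conceptual; yours is more self-contained and makes the structure of $\sigma = u_p\,(s_pd)$ completely explicit, which is pleasant.
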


\begin{proof}
Let \( (\gP, \gS, K) \) be the Siegel triple associated with the Siegel set \( \fS \), and write
\( \fS = \Omega.A_t.K \) using the notation of section~\ref{ssec:siegel-sets-definition}.

Let \( (\gP_0, \gS_0, K_0) \) be the standard Siegel triple in \( \gGL_n \).
Write \( A_{0,t} \) and \( \Omega_{0,u} \) for the sets called \( A_t \) and \( \Omega_u \) in the definition of standard Siegel sets.

Since \( \gP \) and \( \gP_0 \) are minimal \( \bQ \)-parabolic subgroups of \( \gGL_n \), there exists \( \gamma \in \gGL_n(\bQ) \) such that \( \gP_0 = \gamma^{-1} \gP \gamma \).

Since \( K_0 \) and \( \gamma^{-1} K \gamma \) are maximal compact subgroups of \( \gGL_n(\bR) \), there exists \( \sigma \in \gGL_n(\bR) \) such that \( \gamma^{-1} K \gamma = \sigma K_0 \sigma^{-1} \).
Applying the Iwasawa decomposition
\[ \gGL_n(\bR) = \gU_0(\bR) . \gS_0(\bR)^+ . K_0, \]
we may assume that \( \sigma = \tau \beta \) where \( \beta \in \gS_0(\bR)^+ \) and \( \tau \in \gU_0(\bR) \).

Under this assumption, \( \sigma \in \gP_0(\bR) \).
Hence \( \sigma^{-1} \gamma^{-1}.\gP.\gamma \sigma = \gP_0 \).
By \cref{siegel-triple-unique-torus}, \( \sigma^{-1} \gamma^{-1}.\gS.\gamma \sigma = \gS_0 \).
Thus \( \sigma^{-1} \gamma^{-1}.A_t.\gamma \sigma = A_{0,t} \).

Now
\begin{align*}
      \gamma^{-1} \fS \gamma \sigma
  & = \gamma^{-1} \Omega \gamma . \sigma . \sigma^{-1} \gamma^{-1} A_t \gamma \sigma . \sigma^{-1} \gamma^{-1} K \gamma \sigma
\\& = \gamma^{-1} \Omega \gamma . \tau \beta . A_{0,t} . K_0
\end{align*}
Here \( \gamma^{-1} \Omega \gamma \tau \) is a compact subset of \( \gU_0(\bR) \) so it is contained in \( \Omega_{0,u} \) for a suitable \( u > 0 \).
Meanwhile \( \beta.A_{0,t} \) is contained in \( A_{0,s} \) for a suitable \( s > 0 \).
Thus \( \gamma^{-1} \fS \gamma \sigma \) is contained in the standard Siegel set \( \Omega_{0,u}.A_{0,s}.K_0 \), as required.
\end{proof}

\section{Siegel sets and subgroups} \label{sec:subgroups}

In this section we prove \cref{intro:siegel-set-inclusion}.
The proof gives additional information on the relationship between the Siegel triples for \( \gG \) and \( \gH \), as follows.

\begin{theorem} \label{siegel-set-inclusion}
Let \( \gG \) and \( \gH \) be reductive \( \bQ \)-algebraic groups, with \( \gH \subset \gG \).

Let \( \fS_\gH \) be a Siegel set in \( \gH(\bR) \) with respect to the Siegel triple \( (\gPH, \gSH, \KH) \).

Then there exist a Siegel set \( \fS_\gG \subset \gG(\bR) \) and a finite set \( C \subset \gG(\bQ) \) such that
\[ \fS_\gH \subset C.\fS_\gG. \]

Furthermore if \( (\gPG, \gSG, \KG) \) denotes the Siegel triple associated with \( \fS_\gG \), then
\( R_u(\gPH) \subset R_u(\gPG) \), \( \gSH = \gSG \cap \gH \) and \( \KH = \KG \cap \gH(\bR) \).
\end{theorem}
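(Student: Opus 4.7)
The plan is to first construct a Siegel triple $(\gPG, \gSG, \KG)$ for $\gG$ compatible with the given $(\gPH, \gSH, \KH)$, and then deduce the inclusion $\fS_\gH \subset C.\fS_\gG$ by decomposing elements of $\fS_\gH$ in the appropriate $\gG$-Iwasawa form.

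\textbf{Construction of the triple.} First, extend $\KH$ to a maximal compact subgroup $\KG \subset \gG(\bR)$; this is always possible, and maximality of $\KH$ in $\gH(\bR)$ forces $\KG \cap \gH(\bR) = \KH$, while the Cartan involution $\theta_\gG$ of $\gG(\bR)$ attached to $\KG$ restricts on $\gH(\bR)$ to the Cartan involution $\theta_\gH$ attached to $\KH$. Next, the unipotent $\bQ$-subgroup $R_u(\gPH)$ of $\gG$ lies in the unipotent radical of some parabolic $\bQ$-subgroup of $\gG$, by a standard result of Borel--Tits from \cite{borel-tits:groupes-reductifs}; choose $\gPG$ to be minimal among such parabolics, so that $\gPG$ is a minimal parabolic $\bQ$-subgroup of $\gG$ with $R_u(\gPH) \subset R_u(\gPG)$. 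Finally, construct $\gSG$ as a $\theta_\gG$-stable $\bQ$-torus inside a Levi of $\gPG$, $\gPG(\bR)$-conjugate to a maximal $\bQ$-split torus of $\gPG$, and containing $\gSH$; to force $\gSG \cap \gH = \gSH$, arrange that $\gSG \cap \gH$ is itself $\theta_\gH$-stable, lies in a Levi of $\gPH$, and is $\gPH(\bR)$-conjugate to a maximal $\bQ$-split torus of $\gPH$, so that the uniqueness in \cref{siegel-triple-unique-torus} applied inside $\gH$ forces $\gSG \cap \gH = \gSH$.

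\textbf{Inclusion of Siegel sets.} Write $g \in \fS_\gH$ as $g = \omega \alpha k$ with $\omega \in \gUH(\bR)\gMH(\bR)^+$, $\alpha \in A_{\gH,t}$, and $k \in \KH$. The factors $k \in \KG$ and $\gUH(\bR) \subset \gUG(\bR)$ are immediate. Since $\gMH$ normalises $\gUH \subset \gUG$, I expect one can show $\gMH$ normalises $\gUG$ (so lies in $\gPG$) by exploiting the minimality built into $\gPG$ together with the Borel--Tits theory of roots; hence $\gMH(\bR)^+$ sits inside a Levi of $\gPG$, and because $\gMH/\gSH$ is $\bQ$-anisotropic, $\gMH(\bR)^+$ decomposes as a compact set times $\gSH(\bR)^+ \subset \gSG(\bR)^+$, with the compact part absorbable into $\gMG(\bR)^+$. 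For the diagonal factor, a $\gG$-simple root $\chi \in \Delta$ restricts to a character of $\gSH$ that need not be non-negative on $A_{\gH,t}$, so in general $A_{\gH,t} \not\subset A_{\gG,s}$. However, the $\bQ$-Weyl group $W_\bQ(\gG,\gSG)$ is finite and permutes the closed $\bQ$-Weyl chambers of $\gSG(\bR)^+$ transitively, so fixing a set of representatives in $N_\gG(\gSG)(\bQ)$ gives $A_{\gH,t} \subset \bigcup_w w^{-1}.A_{\gG,s}$ for sufficiently small $s > 0$. These Weyl representatives, possibly with further elements absorbing residual compact pieces, form the finite set $C \subset \gG(\bQ)$, and reassembling the three factors gives $\fS_\gH \subset C.\fS_\gG$; in particular $\#C$ is bounded by $\abs{W_\bQ(\gG,\gSG)}$.

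\textbf{Main obstacle.} The hardest step is the simultaneous construction of $\gSG$: it must contain $\gSH$, be $\theta_\gG$-stable, be $\gPG(\bR)$-conjugate to a maximal $\bQ$-split torus of $\gPG$, and satisfy $\gSG \cap \gH = \gSH$. A closely related subtlety is verifying that $\gMH$ lies inside $\gPG$, which is essential for $\Omega_\gH$ to embed into the $\gG$-Siegel framework; I expect this to follow from the minimality of $\gPG$ together with \cite{borel-tits:groupes-reductifs}, but the precise argument (involving roots of $\gG$ with respect to $\gSG$ and their restrictions to $\gSH$) is the technical heart of the proof.
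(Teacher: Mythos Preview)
Your high-level strategy (build a compatible Siegel triple, then use the \( \bQ \)-Weyl group to push \( \AH{t} \) into translates of \( \AG{s} \)) matches the paper, and your guess that \( \#C \leq \abs{W} \) is correct. But several of the intermediate claims are not right, and one of them is the genuine crux of the argument.

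First, your construction of \( \gPG \) as ``a minimal parabolic with \( R_u(\gPH) \subset R_u(\gPG) \)'' does not force \( \gSH \subset \gPG \), so you have no control over \( \gSG \) and cannot arrange \( \gSH \subset \gSG \); you cannot simply ``construct \( \gSG \) \ldots\ containing \( \gSH \)'' because, once \( \gPG \) and \( \KG \) are fixed, \( \gSG \) is already uniquely determined by \cref{siegel-triple-unique-torus}. The paper avoids this by first building a parabolic \( \gQ \subset \gG \) whose Levi is exactly \( \gZ = Z_\gG(\gSH) \) and with \( \gUH \subset R_u(\gQ) \), and then taking \( \gPG \) minimal inside \( \gQ \); this guarantees \( \gSH \) is central in the Levi of \( \gQ \), hence contained in \( \gPG \) and ultimately in \( \gSG \). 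Second, your expectation that \( \gMH \subset \gPG \) is in general false: one only has \( \gMH \subset \gZ \), and \( \gZ \) is the Levi of \( \gQ \), not of \( \gPG \). The paper handles \( \Omega_{\gMH} \) by Iwasawa decomposition inside \( \gZ \), splitting it into a piece in a minimal parabolic \( \gPZ = \gPG \cap \gZ \) and a piece in \( \KZ = \KG \cap \gZ(\bR) \). Third, and most seriously, your Weyl-chamber argument is incomplete: it is true that \( \AH{t} \subset \bigcup_{w} w \AG{1} w^{-1} \), but an arbitrary \( w \) moving \( \alpha \) into the positive chamber will \emph{not} do, because conjugating the unipotent part of \( \omega \) by \( w \) must land in \( \gUG \). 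One needs, for each \( \alpha \in \AH{t} \), a \( w \) satisfying simultaneously \( \alpha \in w\AG{t'}w^{-1} \) and \( \gUH, \gUZ \subset w\gUG w^{-1} \). Producing such a \( w \) is the technical heart of the proof: the paper does it by constructing, for each \( \alpha \), a parabolic \( \gQ_\alpha \) via a carefully chosen order on \( X^*(\gSH) \) that makes both \( \Phi_\gH^+ \) and a maximal set of characters with \( \chi(\alpha) \geq 1 \) positive. Finally, a smaller but genuine point: \( \gSG \) is not defined over \( \bQ \), so ``representatives in \( N_\gG(\gSG)(\bQ) \)'' does not make sense; the paper passes to a \( \bQ \)-split conjugate \( \gSG' \) to get rational representatives \( w_\bQ' \), and separately chooses representatives \( w_K \in \KG \) to handle the compact factor.
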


We denote sets used in the construction of the Siegel sets \( \fS_\gG \) and \( \fS_\gH \) by the notation from section~\ref{ssec:siegel-sets-definition} with the subscript \( \gG \) or \( \gH \) added as appropriate.
Thus we write
\[ \fS_\gH = \Omega_\gH . \AH{t} . \KH \] 
where \( \Omega_\gH \) is a compact subset of \( \gUH(\bR)\gMH(\bR)^+ \), \( \KH \) is a maximal compact subgroup of \( \gH(\bR) \) and
\[ \AH{t} = \{ \alpha \in \gSH(\bR)^+ : \chi(\alpha) \geq t \text{ for all } \chi \in \Delta_\gH \}. \]

\medskip

After the publication of this paper, Christian Schnell discovered an error in the proof of \cref{siegel-set-inclusion}, which has been corrected in this version of the paper.
Indeed, the original version of item~(2) below \cref{PH} was not strong enough for \cref{KZ-is-maximal-compact} to be valid.
I have therefore corrected item~(2) below \cref{PH}, and the proof of \cref{KZ-is-maximal-compact}, as suggested by Christian.
For additional explanation of this error, and examples showing that the conclusion of \cref{siegel-set-inclusion} may not be satisfied if we choose a subgroup~$\KG$ which does not satisfy the corrected item~(2), see the correction~\cite{orr-schnell}.

Experience since the publication of this paper has shown that \cref{siegel-set-inclusion} is often not sufficient for applications: one wants to choose $\KG$ in advance, rather than simply being assured that $\KG$ exists.
In fact, it is possible to choose $\KG$ in \cref{siegel-set-inclusion} to be any maximal compact subgroup of $\gG(\bR)$ satisfying the corrected item~(2) below \cref{PH}.
For a precise statement of this strengthened version of \cref{siegel-set-inclusion}, see \cite[Theorem~1]{orr-schnell}.

\subsection{Reduction to a split torus \texorpdfstring{\( \gSH \)}{SH}}

We begin by reducing the proof of \cref{siegel-set-inclusion} to the case in which the torus \( \gSH \) is \( \bQ \)-split.
Note that, even when \( \gSH \) is \( \bQ \)-split, it is not always possible to choose a \( \bQ \)-split torus for \( \gSG \).

According to the definition of a Siegel set, we can choose \( u \in \gPH(\bR) \) such that \( u \gSH u^{-1} \) is a maximal \( \bQ \)-split torus in \( \gPH \).
Using the Levi decomposition \( \gPH = Z_\gH(\gSH) \ltimes \gUH \), we may assume that \( u \in \gUH(\bR) \).

Now \( \Omega_\gH u^{-1} \) is a compact subset of \( \gUH(\bR).u\gMH(\bR)^+u^{-1} \) so
\[ \fS_\gH.u^{-1} = \Omega_\gH u^{-1} . u \AH{t} u^{-1} . u \KH u^{-1}. \]
is a Siegel set with respect to the Siegel triple \( (\gPH, u \gSH u^{-1}, u \KH u^{-1}) \).

We prove below that \cref{siegel-set-inclusion} holds when \( \gSH \) is \( \bQ \)-split.
Hence there exist a Siegel set \( \fS_\gG' \subset \gG(\bR) \) and a finite set \( C \subset \gG(\bQ) \) such that
\[ \fS_\gH.u^{-1} \subset C.\fS_\gG'. \]

Let \( (\gPG, \gSG', \KG') \) denote the Siegel triple associated with \( \fS_\gG' \).
According to \cref{siegel-set-inclusion}, \( \gUH \subset R_u(\gPG) \) and so \( u \in R_u(\gPG)(\bR) \).
Therefore
\[ \fS_\gG = \fS_\gG'.u \]
is a Siegel set for \( \gG(\bR) \) with respect to the Siegel triple \( (\gPG, u^{-1} \gSG' u, u^{-1} \KG' u) \).
We clearly have \( \fS_\gH \subset C.\fS_\gG \) and the Siegel triple associated with \( \fS_\gG \) satisfies the conditions of \cref{siegel-set-inclusion} relative to \( (\gPH, \gSH, \KH) \).

\subsection{Choosing the Siegel triple} \label{ssec:construct-siegel-triple}

We henceforth assume that \( \gSH \) is \( \bQ \)-split.
As the first step in proving \cref{siegel-set-inclusion} for this case, we choose a Siegel triple \( (\gPG, \gSG, \KG) \) for \( \gG \).

The main difficulty lies in choosing \( \gPG \).
The obvious idea is to choose a minimal parabolic \( \bQ \)-subgroup of \( \gG \) which contains \( \gPH \), but such a subgroup does not always exist (for example, if \( \gG \) is \( \bQ \)-split and \( \gH \) is \( \bQ \)-anisotropic).
Instead we construct a larger parabolic \( \bQ \)-subgroup \( \gQ \subset \gG \) which contains~\( \gPH \), and then define \( \gPG \) to be a minimal parabolic \( \bQ \)-subgroup of~\( \gQ \).

Let us write
\[ \gZ = Z_\gG(\gSH). \]

\begin{lemma} \label{construct-Q}
There exists a parabolic \( \bQ \)-subgroup \( \gQ \subset \gG \) such that
\begin{enumerate}[(i)]
\item \( \gZ \) is a Levi subgroup of \( \gQ \), and
\item \( \gUH \subset R_u(\gQ) \).
\end{enumerate}
\end{lemma}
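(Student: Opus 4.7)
The plan is to build $\gQ$ from the theory of parabolic $\bQ$-subgroups and relative root systems in \cite{borel-tits:groupes-reductifs}. Since $\gSH$ is a $\bQ$-split torus of $\gG$ (after the reduction carried out in section~\ref{ssec:construct-siegel-triple}), its centralizer $\gZ = Z_\gG(\gSH)$ is a reductive $\bQ$-subgroup and, by Borel--Tits, is a Levi component of some parabolic $\bQ$-subgroup of $\gG$. In fact the parabolic $\bQ$-subgroups having $\gZ$ as a Levi are classified by the chambers of the relative root system $\Phi(\gG, \gSH)$ — concretely, by cocharacters $\lambda \in X_*(\gSH)$ which are generic in the sense that $\langle \alpha, \lambda \rangle \neq 0$ for every $\alpha \in \Phi(\gG, \gSH)$, via the Mumford-style construction $\gQ(\lambda) = \{ g \in \gG : \lim_{t \to 0} \lambda(t) g \lambda(t)^{-1} \text{ exists} \}$, whose unipotent radical $R_u(\gQ(\lambda))$ has weights $\{\alpha : \langle \alpha, \lambda \rangle > 0\}$.

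Next, I would identify precisely which positive system we need. After the preceding reduction, $\gSH$ is a maximal $\bQ$-split torus of $\gPH$ and hence of $\gH$; therefore $\gPH$ is a minimal parabolic $\bQ$-subgroup of $\gH$ with Levi $Z_\gH(\gSH)$, and the weights of $\gSH$ acting on $\operatorname{Lie}(\gUH)$ are exactly the positive roots $\Phi_\gH^+$ of $\Phi(\gH, \gSH)$ determined by $\gPH$. To force $\gUH \subset R_u(\gQ)$ it therefore suffices to choose $\lambda$ in the open Weyl chamber
\[ C_\gH = \{ \lambda \in X_*(\gSH) \otimes \bR : \langle \alpha, \lambda \rangle > 0 \text{ for all } \alpha \in \Phi_\gH^+ \}. \]

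The construction is then immediate: pick any $\lambda \in C_\gH \cap X_*(\gSH)$ which avoids all the finitely many hyperplanes $\{\langle \alpha, \cdot \rangle = 0\}$ for $\alpha \in \Phi(\gG, \gSH)$; such $\lambda$ exists since $C_\gH$ is a non-empty open cone and cannot be covered by finitely many proper linear subspaces. Setting $\gQ = \gQ(\lambda)$ produces a parabolic $\bQ$-subgroup of $\gG$ whose Levi is exactly $\gZ$ (genericity of $\lambda$ with respect to $\Phi(\gG,\gSH)$) and whose unipotent radical contains every $\gSH$-weight space of $\operatorname{Lie}(\gUH)$, hence contains $\gUH$ itself.

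The only real obstacle is to make sure the cited Borel--Tits machinery applies in this generality — that is, that $\gZ$ really is the Levi of $\gQ(\lambda)$ for generic $\lambda \in X_*(\gSH)$ even though $\gSH$ need not be a maximal $\bQ$-split torus of $\gG$. This is handled by embedding $\gSH$ into a maximal $\bQ$-split torus $\gS$ of $\gG$ (so that $\Phi(\gG, \gS)$ is a genuine root system and restricts to $\Phi(\gG, \gSH)$), noting that $Z_\gG(\gSH)$ is generated by $Z_\gG(\gS)$ together with the root subgroups $\gU_\alpha$ for those $\alpha \in \Phi(\gG, \gS)$ with $\alpha|_{\gSH} = 0$, and then verifying that these are precisely the roots killed by a generic $\lambda \in X_*(\gSH)$. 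After this verification the argument above produces $\gQ$ with the two required properties.
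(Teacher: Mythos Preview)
Your proof is correct and follows essentially the same route as the paper's: the paper chooses an order $>_\gQ$ on $X^*(\gSH)$ making all of $\Phi_\gH^+$ positive (via \cite[Proposition~3.1]{borel-tits:groupes-reductifs}) and sets $\gQ = \gG_{\Phi_\gQ}$ in the Borel--Tits notation of \cite[paragraph~3.8]{borel-tits:groupes-reductifs}, which is precisely your dynamic parabolic $\gQ(\lambda)$ for a generic cocharacter $\lambda \in C_\gH$. The only difference is packaging --- orders on the character lattice versus cocharacters --- and your final paragraph's detour through a maximal $\bQ$-split torus is not needed, since the cited Borel--Tits results (equivalently, the $\gQ(\lambda)$ construction) apply directly to an arbitrary $\bQ$-split torus.
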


\begin{proof}
Let \( \Phi_\gH^+ \) denote the set of roots \( \Phi(\gSH, \gPH) \).
By \cite[Proposition~3.1]{borel-tits:groupes-reductifs} there exists an order \( >_\gQ \) on \( X^*(\gSH) \) with respect to which all elements of \( \Phi_\gH^+ \) are positive.

Let
\[ \Phi_\gQ = \{ \chi \in \Phi(\gSH, \gG) : \chi >_\gQ 0 \} \]
and let \( \gQ \) denote the group \( \gG_{\Phi_\gQ} \) (using the notation of \cite[paragraph~3.8]{borel-tits:groupes-reductifs} with respect to the torus \( \gSH \)).
By \cite[Th\'eor\`eme~4.15]{borel-tits:groupes-reductifs}, \( \gQ \) is a parabolic \( \bQ \)-subgroup of~\( \gG \) and \( \gZ \) is a Levi subgroup of \( \gQ \).

Since all weights of \( \gSH \) on \( \gUH \) are contained in \( \Phi_\gH^+ \), which is a subset of \( \Phi_\gQ \), \cite[Proposition~3.12]{borel-tits:groupes-reductifs} tells us that \( \gUH \subset \gG_{\Phi_\gQ}^* \),
again using the notation of \cite[paragraph~3.8]{borel-tits:groupes-reductifs}.
By \cite[Th\'eor\`eme~3.13]{borel-tits:groupes-reductifs}, \( \gG_{\Phi_\gQ}^* = R_u(\gQ) \).
This completes the proof that \( \gUH \subset R_u(\gQ) \).
\end{proof}

We will make no use of the following lemma, but it sheds some light on the significance of the group \( \gQ \).

\begin{lemma} \label{PH}
\( \gPH = \gQ \cap \gH \).
\end{lemma}

\begin{proof}
We use the notation from the proof of \cref{construct-Q}.
By construction, we have that \( \Phi(\gSH, \gPH) = \Phi_\gH^+ \subset \Phi_\gQ \).
Hence by \cite[Proposition~3.12]{borel-tits:groupes-reductifs}, \( \gPH \subset \gG_{\Phi_\gQ} = \gQ \).

For the reverse inclusion, observe that \( \Phi(\gSH, \gQ \cap \gH) \subset \Phi_\gH^+ \).
Hence applying \cite[Proposition~3.12]{borel-tits:groupes-reductifs}, this time inside \( \gH \), we get
\[ \gQ \cap \gH \subset \gH_{\Phi_\gH^+} = \gPH.
\qedhere
\]
\end{proof}

\pagebreak

Choose the following subgroups of \( \gG \):
\begin{enumerate}
\item \( \gPG \), a minimal parabolic \( \bQ \)-subgroup of~\( \gQ \).
\item \( \KG \), a maximal compact subgroup of \( \gG(\bR) \) containing \( \KH \), such that the Cartan involution of~$\gG$ associated with $\KG$ stabilises $\gSH$.%
\footnote{Item~(2), the condition on~$\KG$, has been corrected from the published version of the paper.}
\end{enumerate}

\begin{lemma4A}\!%
\footnote{Lemma~4.A does not appear in the published version of the paper.}
There exists a maximal compact subgroup $\KG \subset \gG(\bR)$ satisfying the condition of item~(2) above.
\end{lemma4A}

\begin{proof}
Choose a faithful representation $\rho \colon \gG_\bR \to \gGL(V)$ for some real vector space~$V$.
By \cite[Theorem~7.3]{mostow:self-adjoint-groups}, there exists a positive definite symmetric form $\psi$ on~$V$ with respect to which the groups $\KH \subset \gH(\bR) \subset \gG(\bR) \subset \gGL(V)$ are simultaneously self-adjoint.
In other words, if $\Theta$ denotes the Cartan involution of $\gGL(V)$ associated with the form~$\psi$, then $\Theta$ restricts to Cartan involutions of $\KH$, $\gH$ and $\gG$.

Letting $\KG$ denote the stabiliser of $\psi$ in $\gG(\bR)$, we obtain $\KH \subset \KG$.

Since $\Theta$ restricts to the Cartan involution of $\gH$ associated with the maximal compact subgroup~$\KH$, and since $(\gPH, \gSH, \KH)$ is a Siegel triple for~$\gH$, $\Theta$ stabilises $\gSH$.
\end{proof}

Define the following notation for subgroups of \( \gG \) which are uniquely determined by \( \gPG \) and \( \KG \):
\begin{enumerate}
\item \( \gSG \) is the unique torus such that \( (\gPG, \gSG, \KG) \) is a Siegel triple for \( \gG \).
\item \( \gUG = R_u(\gPG) \).
\item \( \gPZ = \gPG \cap \gZ \) and \( \gUZ = R_u(\gPZ) \).
\item \( \KZ = \KG \cap \gZ(\bR) \).
\end{enumerate}

\begin{lemma} \label{KZ-is-maximal-compact}
\( \KZ \) is a maximal compact subgroup of \( \gZ(\bR) \).
\end{lemma}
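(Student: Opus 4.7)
The plan is to establish that $\gZ$ is stable under the Cartan involution $\theta_\gG$ of $\gG(\bR)$ associated with $\KG$; the lemma then follows from the standard principle that the restriction of a Cartan involution to a $\theta$-stable reductive subgroup is a Cartan involution of that subgroup, whose fixed-point subgroup is maximal compact.

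The first step is the easy observation that $\KG \cap \gH(\bR) = \KH$: the intersection is a compact subgroup of $\gH(\bR)$ containing the maximal compact subgroup $\KH$.  The second and technical step is to verify that $\gH$ is $\theta_\gG$-stable and that $\theta_\gG|_{\gH(\bR)} = \theta_\gH$, the Cartan involution of $\gH(\bR)$ associated with $\KH$.  Stability of $\gH$ under $\theta_\gG$ is arranged by a Mostow-type result, possibly conjugating $\KG$ inside $\gG(\bR)$ so as to render $\gH$ stable under the associated Cartan involution while still containing $\KH$.  Once this is granted, $\theta_\gG|_{\gH(\bR)}$ is an involution of the reductive group $\gH(\bR)$ whose fixed-point subgroup is $\KG \cap \gH(\bR) = \KH$; uniqueness of the Cartan involution determined by a given maximal compact fixed-point subgroup then forces this restriction to coincide with $\theta_\gH$.

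Applying condition~(ii) of the Siegel triple $(\gPH, \gSH, \KH)$, the torus $\gSH$ is $\theta_\gH$-stable, hence $\theta_\gG$-stable by the previous step.  Since $\gZ = Z_\gG(\gSH)$ is the centraliser of a $\theta_\gG$-stable torus, it is itself $\theta_\gG$-stable.  The restriction $\theta_\gG|_{\gZ(\bR)}$ is therefore a Cartan involution of the reductive group $\gZ(\bR)$, and its fixed-point subgroup is $\gZ(\bR) \cap \KG = \KZ$, which is consequently a maximal compact subgroup of $\gZ(\bR)$.

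The main obstacle is the compatibility in the second step: justifying that the chosen $\KG \supset \KH$ can indeed be taken so that $\gH$ is stable under $\theta_\gG$, and that the restriction then realises $\theta_\gH$.  This relies on Mostow's conjugacy theorem for reductive subgroups into $\theta$-stable position, together with the classical fact that a Cartan involution is determined by its maximal compact fixed-point subgroup.  Once this compatibility is in place, everything else is formal.
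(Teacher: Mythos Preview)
Your approach is essentially the same as the paper's: show that the Cartan involution $\Theta$ of $\gG$ attached to $\KG$ restricts to the Cartan involution of $\gH$ attached to $\KH$, deduce that $\gSH$ and hence $\gZ$ are $\Theta$-stable, and conclude that $\KZ = \gZ(\bR)^\Theta$ is maximal compact. You are in fact more careful than the paper on one point: the paper simply asserts that $\Theta$ restricts to the Cartan involution of $\gH$, whereas you correctly flag that this needs $\gH$ to be $\Theta$-stable, which is arranged (at the moment $\KG$ is chosen) by Mostow's theorem.
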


\begin{proof}\!%
\footnote{The proof of \cref{KZ-is-maximal-compact} has been corrected from the published version of the paper.}
Let \( \Theta \) be the Cartan involution of \( \gG \) associated with the maximal compact subgroup \( \KG \).
By the condition on~$\KG$ in item~(2) above Lemma~4.A, \( \Theta \) stabilises \( \gSH \).
Hence \( \Theta \) also stabilises \( \gZ \).
Therefore the fixed points of \( \Theta \) in~\( \gZ(\bR) \), namely \( \KZ \), form a maximal compact subgroup of \( \gZ(\bR) \).
\end{proof}

\begin{lemma} \label{SH-in-SG}
\( \gSH \subset \gSG \).
\end{lemma}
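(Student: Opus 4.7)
The plan is to apply Lemma~\ref{siegel-triple-unique-torus} to the reductive $\bQ$-group $\gZ$ with the pair $(\gPZ, \KZ)$, obtain a canonical $\bR$-torus $\gSZ \subset \gPZ$, and then show both $\gSH \subset \gSZ$ and $\gSZ = \gSG$.

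First I would verify the hypotheses of Lemma~\ref{siegel-triple-unique-torus} for $\gZ$. The group $\gZ$ is reductive, being the centralizer of a torus in a reductive group. Since any parabolic subgroup of $\gQ$ contains $R_u(\gQ)$, the Levi decomposition gives $\gPG = \gPZ \ltimes R_u(\gQ)$; minimality of $\gPG$ as a parabolic of $\gQ$ then translates into minimality of $\gPZ = \gPG \cap \gZ$ as a parabolic of $\gZ$. Moreover $\KZ$ is a maximal compact subgroup of $\gZ(\bR)$ by Lemma~\ref{KZ-is-maximal-compact}, with associated Cartan involution equal to $\Theta|_\gZ$. The lemma therefore produces a unique $\bR$-torus $\gSZ \subset \gPZ$ that is $\gPZ(\bR)$-conjugate to some maximal $\bQ$-split torus of $\gPZ$ and is stabilised by $\Theta$.

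To show $\gSH \subset \gSZ$ I would use centrality. Having reduced to the case that $\gSH$ is $\bQ$-split, and since $\gZ = Z_\gG(\gSH)$ makes $\gSH$ central in $\gZ$, the torus $\gSH$ is a central $\bQ$-split subtorus of $\gZ$. Hence $\gSH$ is contained in the maximal $\bQ$-split subtorus of the connected centre of $\gZ$, and this in turn lies in every maximal $\bQ$-split torus $\gT$ of $\gZ$. Writing $\gSZ = p \gT p^{-1}$ with $p \in \gPZ(\bR) \subset \gZ(\bR)$, centrality of $\gSH$ in $\gZ$ gives $p \gSH p^{-1} = \gSH$, so $\gSZ \supset p \gSH p^{-1} = \gSH$.

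Finally I would identify $\gSZ$ with $\gSG$ via the uniqueness clause of Lemma~\ref{siegel-triple-unique-torus} applied in $\gG$. The maximal $\bQ$-split torus $\gT$ of $\gPZ$ from the previous paragraph contains $\gSH$, so $Z_\gG(\gT) \subset Z_\gG(\gSH) = \gZ$; thus $\dim \gT$ equals the $\bQ$-rank of $\gG$, and $\gT$ is also a maximal $\bQ$-split torus of $\gPG$. Consequently $\gSZ = p \gT p^{-1}$ with $p \in \gPZ(\bR) \subset \gPG(\bR)$ verifies condition~(i) of Lemma~\ref{siegel-triple-unique-torus} inside $\gG$, while $\Theta$-stability of $\gSZ$ yields condition~(ii). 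Uniqueness then forces $\gSZ = \gSG$, and combining with the previous paragraph gives $\gSH \subset \gSG$. The main substantive point is the transfer of the maximal-$\bQ$-split property from $\gZ$ to $\gG$, which is exactly where the centrality of $\gSH$ in $\gZ$ does the real work; the rest is bookkeeping with the structure $\gPG = \gPZ \ltimes R_u(\gQ)$ and the identification of Cartan involutions.
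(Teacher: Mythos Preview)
Your proposal is correct and follows essentially the same route as the paper: construct $\gSZ$ from Lemma~\ref{siegel-triple-unique-torus} applied to $(\gPZ,\KZ)$ in $\gZ$, identify $\gSZ$ with $\gSG$ by checking conditions (i) and (ii) in $\gG$ and invoking uniqueness, and deduce $\gSH\subset\gSZ$ from the centrality of $\gSH$ in $\gZ$. The only cosmetic differences are the order of the last two steps and that you spell out the argument (via $Z_\gG(\gT)\subset\gZ$) for why a maximal $\bQ$-split torus of $\gPZ$ is maximal $\bQ$-split in $\gPG$, which the paper asserts without elaboration.
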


\begin{proof}
Note that \( \gZ \) is a reductive group defined over~\( \bQ \), because \( \gSH \) is defined over~\( \bQ \).
Thus it makes sense to talk about Siegel triples in \( \gZ \).
By \cite[Proposition~4.4]{borel-tits:groupes-reductifs}, \( \gPZ \) is a minimal parabolic \( \bQ \)-subgroup of \( \gZ \).

By \cref{siegel-triple-unique-torus}, there exists a unique torus \( \gSZ \subset \gZ \) such that \( (\gPZ, \gSZ, \KZ) \) is a Siegel triple for \( \gZ \).
This means that:
\begin{enumerate}[(i)]
\item \( \gSZ \) is \( \gPZ(\bR) \)-conjugate to a maximal \( \bQ \)-split torus in \( \gPZ \).
Note that a maximal \( \bQ \)-split torus in \( \gPZ \) is also a maximal \( \bQ \)-split torus in \( \gPG \).

\item The Cartan involution of~\( \gZ \) associated with \( \KZ \) normalises \( \gSZ \).
This involution is the restriction of the Cartan involution of~\( \gG \) associated with~\( \KG \).
\end{enumerate}
Thus \( \gSZ \) satisfies the conditions of \cref{siegel-triple-unique-torus} with respect to \( (\gPG, \KG) \).
By the uniqueness in \cref{siegel-triple-unique-torus}, we conclude that \( \gSZ = \gSG \).

Because \( \gSZ \) is \( \gZ(\bR) \)-conjugate to a maximal \( \bQ \)-split torus in \( \gZ \), it contains every \( \bQ \)-split subtorus of the centre of~\( \gZ \).
In particular \( \gSH \subset \gSZ \).
\end{proof}

Let \( \gSG' \) be a maximal \( \bQ \)-split torus in \( \gPZ \).
Because \( (\gPZ, \gSZ, \KZ) \) is a Siegel triple, there exists \( u \in \gPZ(\bR) \) such that \( \gSG' = u \gSZ' u^{-1} \).
Because of the Levi decomposition \( \gPZ = Z_\gG(\gSG) \ltimes \gUZ \), we may assume that \( u \in \gUZ(\bR) \).

The following lemma is not needed in our proof of \cref{intro:siegel-set-inclusion}, but it contains extra information about \( \gSG \) which is included in the statement of \cref{siegel-set-inclusion}.

\begin{lemma}
\( \gSH = \gSG \cap \gH \).
\end{lemma}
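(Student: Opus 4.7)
The inclusion $\gSH \subseteq \gSG \cap \gH$ is immediate: the preceding lemma supplies $\gSH \subseteq \gSG$, and $\gSH \subseteq \gH$ holds by construction. For the reverse inclusion, set $T = \gSG \cap \gH$. The proof of the preceding lemma showed $\gSG = \gSZ \subseteq \gZ = Z_\gG(\gSH)$, so $T \subseteq \gZ \cap \gH = Z_\gH(\gSH)$. Since $\gPH$ is a minimal parabolic $\bQ$-subgroup of $\gH$, its Levi $Z_\gH(\gSH)$ has $\gSH$ as its unique maximal $\bQ$-split torus, with $\bQ$-anisotropic quotient. Hence it suffices to prove that $T$ is a $\bQ$-split subtorus of $Z_\gH(\gSH)$.

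To do this I would use the description $\gSG = u \gSG' u^{-1}$ with $u \in \gUZ(\bR) \subseteq \gZ(\bR)$ that was introduced just before the preceding lemma; here $\gSG'$ is a maximal $\bQ$-split torus of $\gPZ$ (equivalently of $\gZ$), and it contains $\gSH$ because $\gSH$ is $\bQ$-split and central in $\gZ$. Since $u \in \gZ(\bR)$, it centralises $\gSH$. Taking a $\bQ$-split complement $S'' \subseteq \gSG'$ so that $\gSG' = \gSH \cdot S''$ is an almost direct product, we get $\gSG = \gSH \cdot u S'' u^{-1}$ and consequently $T = \gSH \cdot (u S'' u^{-1} \cap \gH)$. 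The problem reduces to showing that $u S'' u^{-1} \cap \gH$ is finite.

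Over $\bQ$ the intersection $S'' \cap \gH$ is indeed trivial, since $S'' \cap Z_\gH(\gSH)$ is a $\bQ$-split subtorus of $Z_\gH(\gSH)$ complementary to $\gSH$ and therefore vanishes. The main obstacle is transferring this vanishing to the $\bR$-conjugate $u S'' u^{-1}$, which need not be $\bQ$-rational. For this I would invoke the Cartan involution $\Theta$ associated to $\KG$: Siegel triple condition (ii) forces $\Theta$ to stabilise $\gSG$ and act by inversion on $\gSG(\bR)^\circ$, while $\KH = \KG \cap \gH(\bR)$ makes $\Theta|_\gH$ the Cartan involution of $\gH$ for $\KH$. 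Thus $\Theta$ stabilises both $\gH$ and $T$; combining this with the $\bR$-split, Cartan-antiinvariant nature of $u S'' u^{-1}$ and the $\bQ$-anisotropy of $Z_\gH(\gSH)/\gSH$ should confine $u S'' u^{-1} \cap \gH(\bR)$ to a compact subgroup of the $\bR$-split torus $u S'' u^{-1}$, hence force it to be finite. Making this last step precise is the technical heart of the argument.
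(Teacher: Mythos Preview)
Your plan correctly isolates the easy inclusion and correctly places $T = \gSG \cap \gH$ inside $Z_\gH(\gSH)$, but the ``technical heart'' you leave open is a genuine gap, and the Cartan-involution idea does not close it. The problem is that $\Theta$ already acts by inversion on all of $\gSG(\bR)^+$, so $\Theta$-stability of $T$ (or of $uS''u^{-1}\cap\gH$) is automatic and carries no new information. What you actually need is that the $\bR$-split torus $T^\circ/\gSH$, which sits inside the $\bQ$-anisotropic quotient $Z_\gH(\gSH)/\gSH$, must vanish --- but $\bQ$-anisotropy does not preclude positive $\bR$-rank (think of $\gMH$ non-compact at the real place), and nothing in your setup rules this out.

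The paper avoids this difficulty with a different device: the quotient map $q\colon \gPG \to \gPG/\gUG$. The key observation, built into the definition of a Siegel triple, is that although $\gSG$ is only an $\bR$-torus, its image $q(\gSG)$ is the maximal $\bQ$-split torus of $\gPG/\gUG$, and $q|_{\gSG}$ is injective. One checks $\gUG\cap\gPH=\gUH$, so $q$ restricts to the quotient $\gPH\to\gPH/\gUH$; and since $\gSG\cap\gH\subset\gQ\cap\gH=\gPH$, the image $q(\gSG\cap\gH)$ lies both in $\gPH/\gUH$ and in the $\bQ$-split torus $q(\gSG)$. Now the $\bQ$-anisotropy argument goes through cleanly: $q(\gSH)$ is already maximal $\bQ$-split in $\gPH/\gUH$, forcing $q(\gSH)=q(\gSG\cap\gH)$, and injectivity of $q|_{\gSG}$ finishes. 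The moral is that the correct place to compare $\gSG$ with $\gSH$ is not in $\gG$, where $\gSG$ need not be $\bQ$-rational, but in the Levi quotient $\gPG/\gUG$, where it becomes $\bQ$-split.
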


\begin{proof}
Let \( q \) denote the quotient map \( \gPG \to \gPG/\gUG \).
Observe that \( \gUG \cap \gPH \) is a normal unipotent subgroup of \( \gPH \), so it is contained in \( \gUH \).
On the other hand,
\[ \gUH \subset R_u(\gQ) \cap \gPH \subset \gUG \cap \gPH. \]
Hence \( \gUG \cap \gPH = \gUH \), so \( q \) restricts to the quotient map \( \gPH \to \gPH/\gUH \).

According to the definition of a Siegel triple, \( q(\gSG) \) is a maximal \( \bQ \)-split torus in \( \gPG/\gUG \).
Furthermore, \( \gSG \cap \gH \subset \gQ \cap \gH = \gPH \).
Hence \( q(\gSG \cap \gH) \) is a \( \bQ \)-split torus in \( \gPH/\gUH \).

Since \( \gSH \subset \gSG \cap \gH \) and \( q(\gSH) \) is a maximal \( \bQ \)-split torus in \( \gPH/\gUH \),
we conclude that \( q(\gSH) = q(\gSG \cap \gH) \).
Because \( \gSG \cap \gUG = \{ 1 \} \), \( q_{|\gSG} \) is injective.
Thus \( \gSH = \gSG \cap \gH \).
\end{proof}

\subsection{Comparing \texorpdfstring{\( \AH{t} \)}{AHt} with \texorpdfstring{\( \AG{t'} \)}{AGt'}}

We now compare the sets \( \AH{t} \subset \gSH(\bR) \) and \( \AG{t'} \subset \gSG(\bR) \).
We would like to have \( \AH{t} \subset \AG{t'} \), but it is not always possible to choose \( t' \in \bR_{>0} \) such that this holds.
This is because there may be simple roots in \( \Phi(\gSG, \gG) \) whose restrictions to \( \gSH \) are not positive combinations of simple roots in \( \Phi(\gSH, \gH) \).
The values of such a root are bounded below by a positive constant on~\( \AG{t'} \) but can be arbitrarily close to zero on~\( \AH{t} \).

Instead we show that for a suitable value of \( t' \), every \( \alpha \in \AH{t'} \) can be conjugated into \( \AG{t'} \) by an element of the Weyl group \( N_\gG(\gSG) / Z_\gG(\gSG) \).
This element of the Weyl group must also satisfy certain other conditions which will be used later in the proof of \cref{siegel-set-inclusion}.

Write
\[ W = N_\gG(\gSG) / Z_\gG(\gSG), \quad W' = N_\gG(\gSG') / Z_\gG(\gSG'). \]
Since \( \gSG' = u \gSG  u^{-1} \), conjugation by \( u \) induces an isomorphism \( W \to W' \).

\pagebreak

\begin{proposition} \label{weyl-element-exists}
There exists \( t' > 0 \) (depending only on \( \gG \), \( \gH \), and \( t \)) such that for every \( \alpha \in \AH{t} \), there exists \( w \in W \) such that:
\begin{enumerate}[(i)]
\item \( \gUZ \subset w \gUG w^{-1} \),
\item \( \gUH \subset w \gUG w^{-1} \), and
\item \( \alpha \in w \AG{t'} w^{-1} \).
\end{enumerate}
\end{proposition}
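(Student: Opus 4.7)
The plan is to construct the Weyl element $w$ by selecting a $\gSG$-Weyl chamber of $\gG$ whose closure approximately contains $\log\alpha$ and whose associated minimal parabolic $\gPG^* = w\gPG w^{-1}$ satisfies conditions (i) and (ii). With this setup, (i) and (ii) translate to the inclusions $\Phi(\gSG,\gUZ) \subset \Phi^+_{\gPG^*}$ and $\Phi(\gSG,\gUH) \subset \Phi^+_{\gPG^*}$, where $\Phi(\gSG,\gUH)$ denotes the set of $\gSG$-weights $\psi$ such that the projection of $\mathrm{Lie}(\gUH)$ onto the $\psi$-weight space of $\mathrm{Lie}(\gG)$ is nonzero, and $\Phi^+_{\gPG^*}$ is the positive $\gSG$-root system of $\gPG^*$; condition (iii) becomes the requirement that each simple root of $\gPG^*$ take value at least $t'$ on $\alpha$. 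Setting $\Psi := \Phi(\gSG,\gUZ) \cup \Phi(\gSG,\gUH)$ and noting that $\gUZ \subset \gUG$ and $\gUH \subset R_u(\gQ) \subset \gUG$, one has $\Psi \subset \Phi(\gSG,\gUG)$, so the fundamental chamber of $\gPG$ is \emph{admissible} in the sense that every $\psi \in \Psi$ is positive on it, and admissible chambers correspond bijectively to parabolics $\gPG^*$ satisfying (i) and (ii).

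The production of $w$ proceeds by estimating the distance from $\log\alpha$ to the closed admissible region $\bar R^* := \{x \in \mathrm{Lie}(\gSG)(\bR) : \psi(x) \geq 0 \text{ for all } \psi \in \Psi\}$. For $\psi \in \Phi(\gSG,\gUZ)$, one has $\psi|_{\gSH} = 0$, so $\psi(\log\alpha) = 0$. For $\psi \in \Phi(\gSG,\gUH)$, decomposing $\mathrm{Lie}(\gUH)$ by $\gSH$-weights shows $\psi|_{\gSH} \in \Phi_\gH^+$, and writing $\psi|_{\gSH} = \sum c_i\chi_i$ with $\chi_i \in \Delta_\gH$, $c_i \in \bZ_{\geq 0}$ and $\sum c_i$ bounded by the height $h$ of $\Phi(\gSH,\gH)$ gives $\psi(\log\alpha) \geq h\log t$. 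Fixing a Weyl-invariant inner product on $\mathrm{Lie}(\gSG)(\bR)$ coming from the Killing form, it follows that $\log\alpha$ has Euclidean distance at most $Ch|\log t|$ from $\bar R^*$, where $C$ depends only on root system geometry. I then pick an admissible chamber $C^*$ whose closure is nearest to $\log\alpha$, and let $\gPG^*$ and $w \in W$ be the corresponding parabolic and Weyl element.

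Conditions (i) and (ii) hold automatically. For (iii), since $W$ acts by isometries on $\mathrm{Lie}(\gSG)(\bR)$, the conjugate $w^{-1}\cdot\log\alpha$ lies within distance $Ch|\log t|$ of the closure of the fundamental chamber of $\gPG$, so for each $\chi \in \Delta_\gG$,
\[ \chi(w^{-1}\alpha w) \geq \exp\bigl(-|\chi|\cdot Ch|\log t|\bigr) = t^{|\chi|Ch}, \]
and taking $t' := t^K$ with $K := Ch\cdot\max_{\chi \in \Delta_\gG}|\chi|$ completes (iii). The main obstacle is the combinatorial verification of the distance estimate: showing that $\bar R^*$ is sufficiently large to approximate every $\log\alpha$ with slack bounded uniformly by $|\log t|$, which requires careful root-system bookkeeping and the fact that positive roots are non-negative integer combinations of simple roots.
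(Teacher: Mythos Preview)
Your approach is correct and takes a genuinely different route from the paper. The paper works combinatorially in $X^*(\gSH)$: for each $\alpha$ it chooses a maximal set $\Psi_\alpha \subset \Phi(\gSH,\gG)$ such that $\Phi_\gH^+ \cup \Psi_\alpha$ is $\bR_{>0}$-independent and $\chi(\alpha) \geq 1$ for all $\chi \in \Psi_\alpha$; this determines an order $>_\alpha$ on $X^*(\gSH)$ and hence a parabolic $\gQ_\alpha$ with Levi~$\gZ$ and $\gUH \subset R_u(\gQ_\alpha)$. A short maximality argument then shows that every $\chi \in \Phi(\gSH,\gQ_\alpha)$ is an $\bR_{>0}$-combination of $\Delta_\gH \cup \Psi_\alpha$ with total coefficient at most some uniform $M$, whence $\chi(\alpha) \geq \min(1,t)^M$ directly. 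Setting $\gP_{\gG,\alpha} = \gPZ \ltimes R_u(\gQ_\alpha)$ and conjugating to $\gPG$ via the Weyl group gives $w$. Your route is metric rather than combinatorial: you place $\log\alpha$ in $\mathrm{Lie}(\gSG)(\bR)$, bound its distance to the admissible cone $\bar R^*$, and take the nearest admissible chamber. The paper's argument is fully self-contained and never leaves the character lattice; yours is geometrically more transparent but relies on an external input.

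That input is the step ``it follows that $\log\alpha$ has Euclidean distance at most $Ch|\log t|$ from $\bar R^*$''. This is not, as your last paragraph suggests, ``root-system bookkeeping'': the fact that positive roots are non-negative integer combinations of simple roots is what gives you $\psi(\log\alpha) \geq -h|\log t|$ for each $\psi \in \Psi$, and that part is fine. Passing from these finitely many half-space slacks to a Euclidean distance bound is a separate polyhedral-geometry fact---a Hoffman-type error bound (Hoffman, 1952) for the system $\{\psi \geq 0 : \psi \in \Psi\}$---which you should either cite or prove directly. It is true and the constant depends only on $\Psi$ (hence only on $\gG,\gH$), so your argument goes through once this is supplied. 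A minor point: your formula $t' = t^K$ tacitly assumes $t \leq 1$, which is harmless since $A_{\gH,t}$ decreases in~$t$.
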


Note that the statement of the proposition makes sense because \( w \gUG w^{-1} \) and \( w \AG{t'} w^{-1} \) do not depend on the choice of representative of \( w \) in \( N_\gG(\gSG) \).

\subsubsection*{Construction of \( \gQ_\alpha \)}

Suppose that we are given \( \alpha \in \AH{t} \).
In order to find \( w \in W \) as in \cref{weyl-element-exists},
we construct a parabolic subgroup \( \gP_{\gG,\alpha} = w \gPG w^{-1} \) by a refinement of the construction of \( \gPG \) from section~\ref{ssec:construct-siegel-triple}.
First we construct a larger parabolic subgroup \( \gQ_\alpha \) which satisfies conditions (i) and (ii) from \cref{construct-Q}, as well as the following additional condition:
\begin{enumerate}[(i)]
\setcounter{enumi}{2}
\item there exists \( t' > 0 \) (independent of \( \alpha \)) such that, for every \( \alpha \in \AH{t} \) and every \( \chi \in \Phi(\gSH, \gQ_\alpha) \), \( \chi(\alpha) \geq t' \).
\end{enumerate}
Similarly to the proof of \cref{construct-Q}, we construct \( \gQ_\alpha \) by choosing a suitable order~\( >_\alpha \) on \( X^*(\gSH) \).

Given \( \alpha \in \gSH(\bR)^+ \), choose a set \( \Psi_\alpha \subset \Phi(\gSH, \gG) \) which is maximal with respect to the following conditions:
\begin{enumerate}[(a)]
\item The set \( \Phi_\gH^+ \cup \Psi_\alpha \) is \( \bR_{>0} \)-independent.
(Recall that \( \Phi_\gH^+ = \Phi(\gSH, \gPH) \).)
\item For all \( \chi \in \Psi_\alpha \), \( \chi(\alpha) \geq 1 \).
\end{enumerate}
There always exists at least one set satisfying conditions (a) and (b), namely the empty set.
Since \( \Phi(\gSH, \gG) \) is finite, we deduce that there is a maximal set \( \Psi_\alpha \) satisfying the conditions.

By (a) there exists an order \( >_\alpha \) on \( X^*(\gSH) \) with respect to which all elements of \( \Phi_\gH^+ \cup \Psi_\alpha \) are positive.
Let
\[ \Phi_\alpha = \{ \chi \in \Phi(\gSH, \gG) : \chi >_\alpha 0 \} \]
and let \( \gQ_\alpha = \gG_{\Phi_\alpha} \) (in the notation of \cite[paragraph~3.8]{borel-tits:groupes-reductifs} with respect to \( \gSH \)).

The only condition on the order \( >_\gQ \) in the proof of \cref{construct-Q} was that all elements of \( \Phi_\gH^+ \) are positive with respect to \( >_\gQ \).
By definition, \( >_\alpha \) satisfies this condition.
Hence the proof of \cref{construct-Q} also applies to \( \gQ_\alpha \).
We conclude that \( \gQ_\alpha \) is a parabolic \( \bQ \)-subgroup of \( \gG \) satisfying conclusions (i) and (ii) of \cref{construct-Q}.

\begin{lemma} \label{Qalpha-roots-combinations}
Every root \( \chi \in \Phi_{\alpha} \) is a \( \bR_{>0} \)-combination of \( \Delta_\gH \cup \Psi_\alpha \).
\end{lemma}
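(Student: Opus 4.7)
My plan is to argue by the maximality of $\Psi_\alpha$, applied --- depending on whether $\chi(\alpha) \geq 1$ or $\chi(\alpha) < 1$ --- either to $\chi$ itself or to $-\chi$. The first step is to isolate a convenient consequence of maximality: every $\chi' \in \Phi(\gSH, \gG)$ with $\chi' >_\alpha 0$ and $\chi'(\alpha) \geq 1$ must already lie in $\Psi_\alpha$. Indeed, if $\chi' \notin \Psi_\alpha$, then $\Psi_\alpha \cup \{\chi'\}$ is a strict enlargement which inherits condition (b) by hypothesis and satisfies condition (a) as well, because any witnessing positive dependence would have to involve $\chi'$ with a positive coefficient (since $\Phi_\gH^+ \cup \Psi_\alpha$ is already $\bR_{>0}$-independent by choice of $\Psi_\alpha$), and would then be a nontrivial $\bR_{>0}$-combination of elements all positive for $>_\alpha$, which cannot equal zero. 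This contradicts the maximality of $\Psi_\alpha$.

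Given any $\chi \in \Phi_\alpha$, I would then split into two cases. If $\chi(\alpha) \geq 1$, the characterisation above immediately gives $\chi \in \Psi_\alpha$, so $\chi$ is a $\bR_{>0}$-combination of $\Delta_\gH \cup \Psi_\alpha$ in the trivial way. If instead $\chi(\alpha) < 1$, so that $(-\chi)(\alpha) = \chi(\alpha)^{-1} > 1$, I would examine the set $\Psi_\alpha \cup \{-\chi\}$. This is a strict enlargement of $\Psi_\alpha$ because $-\chi <_\alpha 0$ while $\Psi_\alpha \subset \Phi_\alpha$, and condition (b) holds automatically by the value computation. If condition (a) also holds, the maximality of $\Psi_\alpha$ is contradicted. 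Otherwise a witnessing positive dependence
\[ c(-\chi) + \sum_\psi a_\psi \psi + \sum_\phi b_\phi \phi = 0, \]
with $c, a_\psi, b_\phi > 0$, $\psi \in \Psi_\alpha$, $\phi \in \Phi_\gH^+$ and with at least one of the two sums non-empty (otherwise $\chi = 0$, absurd), rearranges to
\[ \chi = \frac{1}{c}\Bigl(\sum_\psi a_\psi \psi + \sum_\phi b_\phi \phi\Bigr), \]
exhibiting $\chi$ as a $\bR_{>0}$-combination of $\Phi_\gH^+ \cup \Psi_\alpha$, and hence of $\Delta_\gH \cup \Psi_\alpha$ since each $\phi \in \Phi_\gH^+$ is a non-negative integer combination of $\Delta_\gH$.

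The main obstacle I anticipate is recognising that in the case $\chi(\alpha) < 1$, where adding $\chi$ to $\Psi_\alpha$ is ruled out by condition (b), the right move is to test maximality against $-\chi$ instead: then (b) becomes automatic, and any failure of (a) is not an obstruction but precisely the desired conclusion in disguise --- namely that $\chi$ lies in the cone generated by $\Delta_\gH \cup \Psi_\alpha$. The remaining ingredients are routine consequences of the definitions and of the fact that $\Phi_\gH^+ \cup \Psi_\alpha$ is contained in the positive half of $>_\alpha$.
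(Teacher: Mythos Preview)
Your proof is correct and follows essentially the same route as the paper's: both apply the maximality of \( \Psi_\alpha \) first to \( \Psi_\alpha \cup \{\chi\} \) (to force \( \chi(\alpha) < 1 \) whenever \( \chi \notin \Psi_\alpha \)) and then to \( \Psi_\alpha \cup \{-\chi\} \) (to extract the desired positive dependence). The only difference is organisational---you isolate the implication ``\( \chi >_\alpha 0 \) and \( \chi(\alpha) \geq 1 \) imply \( \chi \in \Psi_\alpha \)'' as a preliminary lemma, while the paper weaves the same deduction into the main argument.
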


\begin{proof}
If \( \chi \in \Psi_\alpha \), the result is trivial.
So we may assume that \( \chi \not\in \Psi_\alpha \).

Since \( \chi >_\alpha 0 \), \( \Psi_\alpha \cup \{ \chi \} \) satisfies (a).
Since \( \chi \not\in \Psi_\alpha \), the maximality of \( \Psi_\alpha \) tells us that \( \Psi_\alpha \cup \{ \chi \} \) does not satisfy (b).
Thus \( \chi(\alpha) < 1 \).

Hence \( \Psi_\alpha \cup \{ -\chi \} \) satisfies (b).
But \( -\chi <_\alpha 0 \), so \( -\chi \not\in \Psi_\alpha \).
Again by the maximality of \( \Psi_\alpha \), we conclude that \( \Psi_\alpha \cup \{ -\chi \} \) does not satisfy (a).
Thus there exist \( m_i, n_j, x \in \bR_{>0} \), \( \chi_i \in \Phi_\gH^+ \) and \( \psi_j \in \Psi_\alpha \) such that
\[ \sum_i m_i \chi_i + \sum_j n_j \psi_j + x(-\chi) = 0. \]
(The coefficient of \( -\chi \) in this equation must be non-zero because \( \Phi_\gH^+ \cup \Psi_\alpha \) is \( \bR_{>0} \)-independent.)

We can rearrange this equation to write \( \chi \) as a \( \bR_{>0} \)-combination of \( \Phi_\gH^+ \cup \Psi_\alpha \).
Since every element of \( \Phi_\gH^+ \) is a \( \bR_{>0} \)-combination of elements of~\( \Delta_\gH \), we deduce that \( \chi \) is a \( \bR_{>0} \)-combination of \( \Delta_\gH \cup \Psi_\alpha \).
\end{proof}

\begin{lemma} \label{Qalpha-root-bound}
There exists \( t' > 0 \) (depending on \( \gG \), \( \gH \) and \( t \) but not on \( \alpha \)) such that for every \( \alpha \in \AH{t} \) and every \( \chi \in \Phi_\alpha \), \( \chi(\alpha) \geq t' \).
\end{lemma}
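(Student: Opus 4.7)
The plan is to combine \cref{Qalpha-roots-combinations} with the defining inequalities for $\alpha \in \AH{t}$ and for the elements of $\Psi_\alpha$, and then exploit the finiteness of the root system $\Phi(\gSH, \gG)$ to upgrade the resulting per-$\alpha$ bound to a uniform one.

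First, I may assume $t \leq 1$ (otherwise replace $t$ by $\min(t,1)$, which only enlarges $\AH{t}$). Fix $\alpha \in \AH{t}$ and $\chi \in \Phi_\alpha$. By \cref{Qalpha-roots-combinations}, write
\[ \chi = \sum_i m_i \chi_i + \sum_j n_j \psi_j \]
with $m_i, n_j \geq 0$, $\chi_i \in \Delta_\gH$, and $\psi_j \in \Psi_\alpha$. Evaluating at $\alpha$ turns this additive relation on characters into
\[ \chi(\alpha) = \prod_i \chi_i(\alpha)^{m_i} \cdot \prod_j \psi_j(\alpha)^{n_j}. \]
Since $\alpha \in \AH{t}$, each $\chi_i(\alpha) \geq t$, and by condition~(b) in the construction of $\Psi_\alpha$ each $\psi_j(\alpha) \geq 1$. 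With $m_i, n_j \geq 0$ and $t \leq 1$, the second product is at least $1$ and the first is at least $t^{\sum_i m_i}$, yielding
\[ \chi(\alpha) \geq t^{\sum_i m_i}. \]

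The remaining, and main, task is to bound $\sum_i m_i$ uniformly in $\alpha$. This is where finiteness enters: $\Phi(\gSH, \gG)$ is finite, so there are only finitely many candidate subsets $\Psi \subset \Phi(\gSH, \gG)$ that can play the role of $\Psi_\alpha$, and hence only finitely many pairs $(\Psi, \chi)$ with $\chi \in \Phi(\gSH, \gG)$ that is a nonnegative combination of $\Delta_\gH \cup \Psi$. For each such pair, fix once and for all one decomposition of $\chi$, and let $M$ be the maximum of $\sum_i m_i$ over this finite collection of chosen decompositions. Applying this uniform bound to the pair $(\Psi_\alpha, \chi)$ gives $\chi(\alpha) \geq t^M$, so the lemma holds with $t' = t^M$.

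The main obstacle, which the above sidesteps rather than resolves head-on, is that the coefficients $(m_i, n_j)$ in \cref{Qalpha-roots-combinations} are not canonical: the set $\Delta_\gH \cup \Psi_\alpha$ is only $\bR_{>0}$-independent, not necessarily $\bR$-linearly independent, so different valid decompositions can have wildly different $\sum_i m_i$. Uniqueness, however, is not needed: any single nonnegative decomposition suffices for the inequality above, and the finiteness of the root system lets me pre-select one such decomposition for each of the finitely many possibilities, yielding the required uniform exponent $M$.
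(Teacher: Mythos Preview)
Your proof is correct and follows essentially the same approach as the paper's: both arguments use \cref{Qalpha-roots-combinations} to express $\chi$ as a nonnegative combination of $\Delta_\gH \cup \Psi_\alpha$, bound $\chi(\alpha)$ below by $\min(1,t)^{\text{(sum of coefficients)}}$ using $\chi_i(\alpha) \geq t$ and $\psi_j(\alpha) \geq 1$, and then invoke finiteness of $\Phi(\gSH,\gG)$ to pre-select one decomposition for each of the finitely many possible pairs, yielding a uniform bound $M$ on the coefficient sum. The only cosmetic differences are that the paper bounds the total $\sum_i m_i + \sum_j n_j$ rather than just $\sum_i m_i$, and writes $\min(1,t)^M$ rather than reducing to $t \leq 1$ first.
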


\begin{proof}
Consider all pairs \( (\chi, \Xi) \) where \( \chi \in \Phi_\gG \) and \( \Xi \) is a subset of~\( \Phi_\gG \) such that \( \chi \) can be written as a \( \bR_{>0} \)-combination of elements of~\( \Xi \).
There are only finitely many such pairs, so we can find \( M \) (depending only on the root system \( \Phi_\gG \)) such that, for every such pair, there exist \( m_i \in \bR_{>0} \) and \( \xi_i \in \Xi \) satisfying
\[ \chi = \sum_i m_i \xi_i \text{ and } \sum_i m_i \leq M. \]

Suppose that \( \chi \in \Phi_\alpha \).
Using \cref{Qalpha-roots-combinations}, we can write \( \chi \) as a combination
\[ \chi = \sum_i m_i \chi_i + \sum_j n_j \psi_j \]
where \( \chi_i \in \Delta_\gH \), \( \psi_j \in \Psi_\alpha \), \( m_i, n_i \in \bR_{>0} \).
By the definition of \( M \), we may assume that \( \sum_i m_i + \sum_j n_j \leq M \).

By the definition of \( \AH{t} \), we have \( \chi_i(\alpha) \geq t \) for all \( i \).
By condition (b) on \( \Psi_\alpha \), we have \( \psi_j(\alpha) \geq 1 \) for all \( j \).
Therefore \( \chi(\alpha) \geq \min(1, t)^M \).
\end{proof}

\begin{proof}[Proof of \cref{weyl-element-exists}]
Because \( \gQ_\alpha \) satisfies conclusion~(i) of \cref{construct-Q}, \( \gZ \) is a Levi subgroup of \( \gQ_\alpha \).
Let \( \gP_{\gG,\alpha} = \gPZ \ltimes R_u(\gQ_\alpha) \).
By \cite[Proposition~4.4]{borel-tits:groupes-reductifs}, \( \gP_{\gG,\alpha} \) is a minimal \( \bQ \)-parabolic subgroup of \( \gG \).

By \cite[Corollaire~5.9]{borel-tits:groupes-reductifs},
the Weyl group \( W' \) acts transitively on the minimal parabolic \( \bQ \)-subgroups of~\( \gG \) containing the maximal \( \bQ \)-split torus \( \gSG' \).
Since \( \gSG' \subset \gPZ \subset \gP_{\gG,\alpha} \), we conclude that there exists \( w' \in W' \) (depending on \( \alpha \)) such that \( \gP_{\gG,\alpha} = w' \gPG w'^{-1} \).

Let \( w \) be the element of \( W \) which corresponds to \( w' \in W' \) via conjugation by \( u \).
Since \( u \in \gUZ(\bR) \subset \gPG(\bR) \cap \gP_{\gG,\alpha}(\bR) \), we have
\[ \gP_{\gG,\alpha} = w \gPG w^{-1}. \]
Since \( \gQ_\alpha \) satisfies conclusion (ii) of \cref{construct-Q}, we have
\[ \gUH \subset R_u(\gQ_\alpha) \subset R_u(\gP_{\gG,\alpha}) = w \gUG w^{-1}. \]
Furthermore \( \gPZ \subset \gP_{\gG,\alpha} \) and so \( \gUZ \subset R_u(\gP_{\gG,\alpha}) \).
This proves conclusions (i) and (ii) of \cref{weyl-element-exists}.

Since \( \gP_{\gG,\alpha} \subset \gQ_\alpha \), if \( \chi \in \Phi(\gSG, \gP_{\gG,\alpha}) \) then \( \chi_{|\gSH} \in  \Phi_\alpha \cup \{ 0 \} \).%
\footnote{Corrected from the published version.}
Hence by \cref{Qalpha-root-bound},
\[ \chi(\alpha) \geq t' \text{ for all } \alpha \in \AH{t} \text{ and }  \chi \in \Phi(\gSG,\gP_{\gG,\alpha}). \]
Noting that
\[ w \AG{t'} w^{-1} = \{ \beta \in \gSG(\bR)^+ : \chi(\beta) \geq t' \text{ for all simple roots of } \gP_{\gG,\alpha} \} \]
we conclude that \( \alpha \in w \AG{t'} w^{-1} \), proving conclusion (iii) of \cref{weyl-element-exists}.
\end{proof}

\subsection{Weyl group representatives} \label{ssec:weyl-group-representatives}

We need to choose two representatives for each element \( w \) in the Weyl group \( W = N_\gG(\gSG) / Z_\gG(\gSG) \).

Firstly we would like to choose representatives for \( W \) in \( \gG(\bQ) \).
However this is not usually possible because the torus \( \gSG \) is not defined over \( \bQ \).
Instead, recall that conjugation by \( u \) induces an isomorphism \( W \to W' \).
Given \( w \in W \), let \( w' \) denote the corresponding element of \( W' \).
By \cite[Th\'eor\`eme~5.3]{borel-tits:groupes-reductifs},
we can choose \( w_\bQ' \in \gG(\bQ) \) which represents \( w' \).
We then get a representative for \( w \) by setting
\[ w_\bQ =  u^{-1} \, w_\bQ' \, u. \]

Secondly we choose representatives for \( W \) in \( \KG \).

\begin{lemma} \label{w-k}
Let \( \gG \) be a reductive \( \bQ \)-algebraic group.
Let \( (\gPG, \gSG, \KG) \) be a Siegel triple in \( \gG \).

Every \( w \in N_\gG(\gSG) / Z_\gG(\gSG) \) has a representative \( w_K \in \KG \).
\end{lemma}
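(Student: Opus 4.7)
The plan is to exploit the Cartan decomposition of $\gG(\bR)$ determined by $\KG$. By the definition of a Siegel triple (\cref{siegel-triple-unique-torus}), the torus $\gSG$ is stable under the Cartan involution $\Theta$ associated with $\KG$. Writing $\operatorname{Lie}\gG(\bR) = \operatorname{Lie}\KG \oplus \mathfrak{p}$ for the $(\pm 1)$-eigenspace decomposition of $\Theta$, the global Cartan decomposition provides a unique factorisation $\gG(\bR) = \KG \cdot \exp(\mathfrak{p})$.

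Starting from any representative $n \in N_\gG(\gSG)(\bR)$ of $w$, the $\Theta$-stability of $\gSG$ ensures that $\Theta(n)$ also normalises $\gSG$, so $y := n \Theta(n)^{-1}$ lies in $N_\gG(\gSG)(\bR)$ and satisfies $\Theta(y) = y^{-1}$. Hence $y$ lies in the symmetric factor $\exp(\mathfrak{p})$; write $y = \exp(2Y)$ for the unique $Y \in \mathfrak{p}$. I would then set $w_K := \exp(-Y) \cdot n$. The relation $\exp(2Y) = n \Theta(n)^{-1}$ rearranges to $\exp(Y)\Theta(n) = \exp(-Y)n$, which immediately yields $\Theta(w_K) = \exp(Y)\Theta(n) = \exp(-Y)n = w_K$, so $w_K \in \KG$.

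For $w_K$ to represent $w$, we additionally need $w_K n^{-1} = \exp(-Y) \in Z_\gG(\gSG)(\bR)$; this is the main step. The idea is that $\operatorname{Ad}(\exp(2Y)) = \exp(2\operatorname{ad}(Y))$ preserves $\mathfrak{s} := \operatorname{Lie}\gSG$ (because $\exp(2Y)$ normalises $\gSG$) and acts on $\mathfrak{s}$ through the finite algebraic Weyl group, hence has finite order. On the other hand, since $Y \in \mathfrak{p}$, the operator $\operatorname{ad}(Y)$ is self-adjoint with respect to the positive-definite form $\langle X_1, X_2 \rangle_\Theta := -B(X_1, \Theta X_2)$ (where $B$ is an invariant form on $\operatorname{Lie}\gG$), so it is diagonalisable with real eigenvalues; since $\lambda \mapsto e^{2\lambda}$ is injective on $\bR$, the $\operatorname{Ad}(\exp(2Y))$-invariance of $\mathfrak{s}$ refines to $\operatorname{ad}(Y)$-invariance of $\mathfrak{s}$. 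Thus $\operatorname{Ad}(\exp(2Y))|_\mathfrak{s}$ is both of finite order and has positive real eigenvalues, forcing it to be the identity; so $\operatorname{ad}(Y)|_\mathfrak{s} = 0$ and $\exp(Y) \in Z_\gG(\gSG)(\bR)$, as required.

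The main obstacle is precisely this final step: showing that the ``symmetric'' element $\exp(Y)$ does not merely normalise but actually centralises $\gSG$. The crux is the interplay between the positivity of $\operatorname{Ad}(\exp(2Y))$ coming from the Cartan decomposition and the finiteness of the algebraic Weyl group, which together force the induced action on $\mathfrak{s}$ to collapse to the identity. (The argument tacitly uses that $w$ admits some representative in $N_\gG(\gSG)(\bR)$, which is the case relevant to this paper, where the Weyl elements all arise by conjugating rational representatives of $W'$ by $u \in \gUZ(\bR)$.)
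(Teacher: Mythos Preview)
Your argument is correct and rather elegant: you are essentially taking the polar (Cartan) decomposition $n = k\exp(X)$ of an $\bR$-representative $n$ of $w$ and showing that the compact factor $k = \exp(-Y)n$ already lies in $N_\gG(\gSG)$ and represents $w$. The key step --- that the ``positive'' factor $\exp(Y)$ centralises $\gSG$ --- is handled cleanly by your observation that $\operatorname{Ad}(\exp(2Y))|_{\mathfrak{s}}$ is simultaneously diagonalisable with positive real eigenvalues (because $\operatorname{ad}(Y)$ is $\langle\cdot,\cdot\rangle_\Theta$-self-adjoint) and of finite order (because the Weyl group is finite), forcing it to be the identity.

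This is a genuinely different route from the paper's proof, which instead enlarges $\gSG$ to a maximal $\bR$-split torus $\gTG \supset \gSG$ stable under the Cartan involution, quotes the standard fact (from \cite[section~14]{borel-tits:groupes-reductifs}) that the Weyl group $N_\gG(\gTG)/Z_\gG(\gTG)$ of a $\Theta$-stable maximal $\bR$-split torus is represented in $\KG$, and then uses $N_\gG(\gSG) = (N_\gG(\gSG)\cap N_\gG(\gTG)) \cdot Z_\gG(\gSG)$ to transfer this back to $\gSG$. Your approach is more self-contained and avoids the auxiliary torus $\gTG$ and the external citation; the paper's approach has the mild advantage that it does not need to start from an $\bR$-point of $N_\gG(\gSG)$ representing $w$ (it begins with $\sigma \in \gN(\bC)$), whereas you must assume such a representative exists --- a point you correctly flag, and which is indeed automatic for the elements of $W$ used elsewhere in the paper.
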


\begin{proof}
Let $\gTG$ be a maximal $\bR$-split torus in~$\gG$ which contains $\gSG$ and is stabilised by the Cartan involution.%
\footnote{The first sentence of the proof of \cref{w-k} has been corrected from the published version.}

Let \( \gN = N_\gG(\gSG) \cap N_\gG(\gTG) \).
Because \( \gSG \) is conjugate to a maximal \( \bQ \)-split torus of \( \gG \), \cite[Corollaire~5.5]{borel-tits:groupes-reductifs} implies that
\[ N_\gG(\gSG) = \gN.Z_\gG(\gSG). \]
Therefore we can choose \( \sigma \in \gN(\bC) \) such that \( w = \sigma.Z_\gG(\gSG) \).

According to the final displayed equation from \cite[section~14]{borel-tits:groupes-reductifs},
every element of \( N_\gG(\gTG) / Z_\gG(\gTG) \) has a representative in \( \KG \).
In particular, there exists \( w_K \in N_\gG(\gTG)(\bR) \cap \KG \) which represents \( \sigma.Z_\gG(\gTG) \).
Then
\[ w_K \sigma^{-1} \in Z_\gG(\gTG)(\bC) \subset Z_\gG(\gSG)(\bC). \]
It follows that \( w_K \) normalises \( \gSG \) and represents \( w \in N_\gG(\gSG) / Z_\gG(\gSG) \).
\end{proof}

Since the Cartan involution of \( \gG \) associated with \( \KG \) stabilises \( \gSG \), it also stabilises \( Z_\gG(\gSG) \).
Hence \( \KG \cap Z_\gG(\gSG)(\bR) \) is a maximal compact subgroup of \( Z_\gG(\gSG)(\bR) \).
By \cite[Chapter~XV, Theorem~3.1]{hochschild:lie-groups}, \( \KG \cap Z_\gG(\gSG)(\bR) \) meets every connected component of \( Z_\gG(\gSG)(\bR) \).
When choosing \( w_K \) as in \cref{w-k}, we may therefore assume that \( w_K \in w_\bQ.Z_\gG(\gSG)(\bR)^+ \).

We will need the following lemma about \( w_\bQ \) and \( w_\bQ' \).
This lemma does not hold for every element of \( W \), so we restrict our attention to elements which satisfy conditions (i) and (ii) of \cref{weyl-element-exists}, that is, elements of the set
\[ W^\dag = \{ w \in W : \gUZ \subset w \gUG w^{-1} \text{ and } \gUH \subset w \gUG w^{-1} \}. \]

\begin{lemma} \label{prod-of-wqs}
If \( w \in W^\dag \), then \( w_\bQ'^{-1} w_\bQ \in \gUG(\bR) \).
\end{lemma}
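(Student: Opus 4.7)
The natural starting point is the defining relation \( w_\bQ = u^{-1} w_\bQ' u \), which rearranges to
\[ w_\bQ'^{-1} w_\bQ \;=\; (w_\bQ'^{-1} u^{-1} w_\bQ') \cdot u. \]
I would prove the lemma by showing that \emph{both} factors on the right already lie in \( \gUG(\bR) \).

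For the second factor, the point is that \( \gUZ \) is itself contained in \( \gUG \). Since \( \gZ \) is a Levi subgroup of \( \gQ \), we have \( \gQ = \gZ \ltimes R_u(\gQ) \); moreover, every Borel subgroup of \( \gQ \) contains the connected solvable normal subgroup \( R_u(\gQ) \), so \( R_u(\gQ) \subset \gPG \). A routine application of the modular law (together with \( \gZ \cap R_u(\gQ) = 1 \)) then yields \( \gPG = \gPZ \ltimes R_u(\gQ) \), and passing to unipotent radicals gives \( \gUG = \gUZ \ltimes R_u(\gQ) \). In particular \( u \in \gUZ(\bR) \subset \gUG(\bR) \), as required.

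For the first factor, I would prove the conjugation identity \( w_\bQ' \gUG w_\bQ'^{-1} = w \gUG w^{-1} \). Starting from \( w_\bQ' = u w_\bQ u^{-1} \),
\[ w_\bQ' \gUG w_\bQ'^{-1} \;=\; u \, w_\bQ \gUG w_\bQ^{-1} \, u^{-1} \;=\; u \, (w \gUG w^{-1}) \, u^{-1}, \]
where the first equality uses that \( u \in \gUG \) normalises \( \gUG \) (from the previous step), and the second uses the definition of the \( W \)-action on \( \gUG \). Condition (i) of \( W^\dag \) gives \( u \in \gUZ(\bR) \subset (w\gUG w^{-1})(\bR) \), so \( u \) also normalises \( w\gUG w^{-1} \), yielding the claimed identity. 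Applying its inverse to \( u^{-1} \in (w\gUG w^{-1})(\bR) \) gives \( w_\bQ'^{-1} u^{-1} w_\bQ' \in \gUG(\bR) \), which completes the proof since \( \gUG(\bR) \) is a group.

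I do not expect any serious obstacle here: the whole argument is bookkeeping organised around the two different representatives \( w_\bQ, w_\bQ' \) of the ``same'' Weyl group element and the intertwiner \( u \). The only slightly delicate point — and the one most easily overlooked — is verifying \( \gUZ \subset \gUG \), since this fact is what allows \( u \) to play the dual role of lying in \( \gUG(\bR) \) and simultaneously normalising \( \gUG \). Once that inclusion is in hand, condition (i) of \( W^\dag \) supplies exactly the second normalisation needed, and the proof essentially collapses to a one-line manipulation.
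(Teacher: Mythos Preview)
Your proof is correct and follows essentially the same approach as the paper: factor \( w_\bQ'^{-1} w_\bQ \) into two pieces and show each lies in \( \gUG(\bR) \) using \( \gUZ \subset \gUG \) together with condition~(i) of \( W^\dag \). The paper's factorisation \( w_\bQ'^{-1} w_\bQ = u \cdot (w_\bQ^{-1} u^{-1} w_\bQ) \) is slightly more direct than yours, since \( w_\bQ \) (rather than \( w_\bQ' \)) represents \( w \), so condition~(i) gives \( w_\bQ^{-1} u^{-1} w_\bQ \in \gUG(\bR) \) immediately without your intermediate identity \( w_\bQ' \gUG w_\bQ'^{-1} = w\gUG w^{-1} \).
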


\begin{proof}
By definition,
\[ w_\bQ'^{-1} w_\bQ = u w_\bQ^{-1} u^{-1} w_\bQ. \]
Because \( w \in W^\dag \) and \( u \in \gUZ(\bR) \), we have
\[ w_\bQ^{-1} u^{-1} w_\bQ \in \gUG(\bR). \]
Multiplying this by \( u \in \gUG(\bR) \) proves the lemma.
\end{proof}

\subsection{Construction of the compact set \texorpdfstring{\( \Omega_\gG \)}{OmegaG}} \label{ssec:omegaG}

By the Langlands decomposition in \( \gPH \), the multiplication map
\[ \gUH(\bR) \times \gMH(\bR)^+ \to \gUH(\bR).\gMH(\bR)^+ \]
is a homeomorphism.
Hence there exist compact sets \( \Omega_{\gUH} \subset \gUH(\bR) \) and \( \Omega_{\gMH} \subset \gMH(\bR)^+ \) such that
\begin{equation} \label{eqn:decomposition-omega-h}
\Omega_\gH \subset \Omega_{\gUH} . \Omega_{\gMH}.
\end{equation}

Since \( \gMH \) need not be contained in \( \gMG \), we need to further decompose \( \Omega_{\gMH} \).
Let \( \gBZ \) be a minimal \( \bR \)-parabolic subgroup of \( \gZ = Z_\gG(\gSH) \) contained in \( \gPZ \).
By the Iwasawa decomposition in \( \gZ \), the multiplication map
\( \gBZ(\bR)^+ \times \KZ \to \gZ(\bR) \)
is a homeomorphism so there exists a compact set \( \Omega_{\gBZ} \subset \gBZ(\bR)^+ \) such that
\begin{equation} \label{eqn:decomposition-omega-mh}
\Omega_{\gMH} \subset \Omega_{\gBZ}.\KZ.
\end{equation}

For each \( w \in W^\dag \), choose \( w_K \), \( w_\bQ \) and \( w_\bQ' \) as in section~\ref{ssec:weyl-group-representatives}.
We have \( w_K w_\bQ^{-1} \in Z_\gG(\gSG)(\bR)^+ \subset \gPZ(\bR)^+ \) and \( \gBZ(\bR)^+ \subset \gPZ(\bR)^+ \), so
\( \Omega_{\gBZ}.w_K w_\bQ^{-1} \) is a compact subset of \( \gPZ(\bR)^+ \).
Noting that \( Z_\gG(\gSG) \) is a Levi subgroup of \( \gPZ \), the Langlands decomposition in \( \gPZ \) \cite[equation~(I.1.8)]{borel-ji:symmetric-spaces} tells us that the multiplication map
\[ \gUZ(\bR) \times \gMG(\bR)^+ \times \gSG(\bR)^+ \to \gPZ(\bR)^+ \]
is a homeomorphism.
Therefore there exist compact sets \( \Omega_{\gUZ}^{[w]} \subset \gUZ(\bR) \), \( \Omega_{\gMG}^{[w]} \subset \gMG(\bR)^+\) and \( \Omega_{\gSG}^{[w]} \subset \gSG(\bR)^+ \) such that
\begin{equation} \label{eqn:decomposition-omega-bz}
\Omega_{\gBZ}.w_K w_\bQ^{-1}  \subset  \Omega_{\gUZ}^{[w]}.\Omega_{\gMG}^{[w]}.\Omega_{\gSG}^{[w]}.
\end{equation}

Let
\[ \Omega_\gG = \bigcup_{w \in W^\dag} w_\bQ'^{-1} . \Omega_{\gUH} . \Omega_{\gUZ}^{[w]} . \Omega_{\gMG}^{[w]} . w_\bQ. \]
Since \( W^\dag \) is finite, \( \Omega_\gG \) is compact.

\pagebreak

\begin{lemma}
\( \Omega_\gG \subset \gUG(\bR) \gMG(\bR)^+ \).
\end{lemma}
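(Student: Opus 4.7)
The plan is to treat each piece of the union separately: fix $w \in W^\dag$ and show that $w_\bQ'^{-1} \Omega_{\gUH} \Omega_{\gUZ}^{[w]} \Omega_{\gMG}^{[w]} w_\bQ \subset \gUG(\bR)\gMG(\bR)^+$. A generic element has the form $w_\bQ'^{-1} \nu_1 \nu_2 m \, w_\bQ$ with $\nu_1 \in \gUH(\bR)$, $\nu_2 \in \gUZ(\bR)$ and $m \in \gMG(\bR)^+$, and my key move would be to conjugate everything through $w_\bQ$, writing
\[ w_\bQ'^{-1} \nu_1 \nu_2 m \, w_\bQ = (w_\bQ'^{-1} w_\bQ) \cdot (w_\bQ^{-1} \nu_1 w_\bQ)(w_\bQ^{-1} \nu_2 w_\bQ)(w_\bQ^{-1} m \, w_\bQ), \]
and then checking that each of the four factors sits in $\gUG(\bR)\gMG(\bR)^+$. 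Observe that this target set is itself a group, since $\gUG$ is normal in $\gPG$ and $\gMG \subset Z_\gG(\gSG) \subset \gPG$ normalises $\gUG$, so closure under multiplication is automatic.

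For the leading factor $w_\bQ'^{-1} w_\bQ$, \cref{prod-of-wqs} is tailored to give exactly $w_\bQ'^{-1} w_\bQ \in \gUG(\bR)$. For the two unipotent factors in the middle, the very definition of $W^\dag$ provides the inclusions $\gUH \subset w\gUG w^{-1}$ and $\gUZ \subset w\gUG w^{-1}$, so that $w_\bQ^{-1} \gUH w_\bQ \subset \gUG$ and $w_\bQ^{-1} \gUZ w_\bQ \subset \gUG$; this is independent of the choice of representative $w_\bQ$ of $w$, because $Z_\gG(\gSG)$ normalises $\gUG$.

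The step I expect to require the most care is the final factor $w_\bQ^{-1} m \, w_\bQ$: I need it to remain in $\gMG(\bR)^+$, which amounts to showing that $w$ normalises $\gMG$. Although $\gMG$ is introduced via the isomorphism $Z_\gG(\gSG) \xrightarrow{\sim} \gPG/\gUG$, it admits an intrinsic description inside $Z_\gG(\gSG)$ as the maximal connected $\bQ$-anisotropic factor complementing $\gSG$ in the almost-direct product $Z_\gG(\gSG) = \gSG \cdot \gMG$. Since $w \in N_\gG(\gSG)/Z_\gG(\gSG)$ normalises both $\gSG$ and $Z_\gG(\gSG)$ by definition, it also normalises this intrinsic complementary factor $\gMG$; continuity of conjugation by the real element $w_\bQ$ then preserves the identity component, giving $w_\bQ^{-1} m \, w_\bQ \in \gMG(\bR)^+$. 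Multiplying the four factors together inside the group $\gUG(\bR)\gMG(\bR)^+$, and taking the union over the finite set $W^\dag$, yields the lemma.
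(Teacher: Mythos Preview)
Your argument follows the paper's proof closely: decompose via conjugation by $w_\bQ$, invoke \cref{prod-of-wqs} for the factor $w_\bQ'^{-1} w_\bQ$, use the defining inclusions of $W^\dag$ for the two unipotent pieces, and show that $w_\bQ$ normalises $\gMG(\bR)^+$ for the last piece.

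The one step that needs more care is your justification that $w$ normalises $\gMG$. You describe $\gMG$ as ``the maximal connected $\bQ$-anisotropic factor complementing $\gSG$'' inside $Z_\gG(\gSG)$ and infer that anything normalising both $\gSG$ and $Z_\gG(\gSG)$ must normalise $\gMG$. But $\gSG$ is only $\gPG(\bR)$-conjugate to a maximal $\bQ$-split torus, so $Z_\gG(\gSG)$ need not carry a $\bQ$-structure and the phrase ``$\bQ$-anisotropic'' does not directly apply there; moreover, a complement to a central torus in a reductive group is not unique, so $\gMG$ is not determined by the abstract pair $(Z_\gG(\gSG),\gSG)$ alone. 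The paper handles this by observing that $\gSG$ is $\gG(\bR)$-conjugate to a maximal $\bQ$-split torus and invoking \cite[Corollaire~5.4]{borel-tits:groupes-reductifs} to conclude that $\gMG$ is normal in $N_\gG(\gSG)$. An equivalent fix in your language is to transport everything by $u$: then $Z_\gG(\gSG')$ is genuinely defined over $\bQ$, the conjugate $u\gMG u^{-1}$ is its maximal $\bQ$-anisotropic subgroup, and the rational representative $w_\bQ' \in \gG(\bQ)$ normalises it because conjugation by $w_\bQ'$ is a $\bQ$-automorphism; conjugating back by $u$ gives what you want.
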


\begin{proof}
For each \( w \in W^\dag \), by \cref{prod-of-wqs}, \( w_\bQ'^{-1} w_\bQ \in \gUG(\bR) \).
Using the definition of \( W^\dag \), we have
\[ w_\bQ^{-1}  \Omega_{\gUH}  w_\bQ  \subset  \gUG(\bR)  \quad \text{ and } \quad  w_\bQ^{-1} \Omega_{\gUZ}^{[w]} w_\bQ  \subset  \gUG(\bR). \]
Multiplying these together, we conclude that
\begin{equation} \label{eqn:compact-in-UG}
w_\bQ'^{-1} . \Omega_{\gUH} . \Omega_{\gUZ}^{[w]} . w_\bQ  \subset  \gUG(\bR).
\end{equation}

Since \( \gSG \) is \( \gG(\bR) \)-conjugate to a maximal \( \bQ \)-split torus in \( \gG \), we can use \cite[Corollaire~5.4]{borel-tits:groupes-reductifs} to show that \( \gMG \) is normal in \( N_\gG(\gSG) \).
It follows that \( w_\bQ \) normalises \( \gMG(\bR)^+ \) and so
\begin{equation} \label{eqn:compact-in-MG}
w_\bQ^{-1} \Omega_{\gMG}^{[w]} w_\bQ  \subset  \gMG(\bR)^+.
\end{equation}

Combining \eqref{eqn:compact-in-UG} and \eqref{eqn:compact-in-MG} proves the lemma.
\end{proof}

\begin{lemma} \label{conjugate-of-omega}
For each \( w \in W^\dag \),
\( w_\bQ'^{-1} \Omega_\gH  \subset  \Omega_\gG . w_K^{-1} . \Omega_{\gSG}^{[w]} .\KZ \).
\end{lemma}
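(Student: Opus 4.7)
The plan is to decompose an arbitrary element $\omega \in \Omega_\gH$ using the three containments \eqref{eqn:decomposition-omega-h}, \eqref{eqn:decomposition-omega-mh}, and \eqref{eqn:decomposition-omega-bz} established in Section~\ref{ssec:omegaG}, and then rearrange the resulting product so as to expose the required factorisation. Concretely, I first write $\omega = \omega_U \omega_M$ with $\omega_U \in \Omega_{\gUH}$ and $\omega_M \in \Omega_{\gMH}$; next decompose $\omega_M = \omega_B k$ with $\omega_B \in \Omega_{\gBZ}$ and $k \in \KZ$; and finally invert \eqref{eqn:decomposition-omega-bz} in the form $\omega_B = \omega_U' \omega_M' \omega_S' \cdot w_\bQ w_K^{-1}$, where $\omega_U' \in \Omega_{\gUZ}^{[w]}$, $\omega_M' \in \Omega_{\gMG}^{[w]}$, and $\omega_S' \in \Omega_{\gSG}^{[w]}$. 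Putting these together yields $\omega = \omega_U \omega_U' \omega_M' \omega_S' \, w_\bQ w_K^{-1} k$.

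The next step is to insert $w_\bQ w_\bQ^{-1} = 1$ between $\omega_M'$ and $\omega_S'$, producing
\[
w_\bQ'^{-1} \omega = \bigl(w_\bQ'^{-1} \omega_U \omega_U' \omega_M' w_\bQ\bigr) \cdot \bigl(w_\bQ^{-1} \omega_S' \, w_\bQ w_K^{-1}\bigr) \cdot k.
\]
The first factor lies in $\Omega_\gG$ by its definition as a union indexed by $W^\dag$, taking the summand corresponding to this particular $w$. It therefore remains to rewrite the middle factor as $w_K^{-1} \omega_S'$.

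The key observation is that $w_\bQ w_K^{-1}$ commutes with $\omega_S' \in \gSG(\bR)^+$. Indeed, by the construction in Section~\ref{ssec:weyl-group-representatives}, $w_K \in w_\bQ \cdot Z_\gG(\gSG)(\bR)^+$, so $w_\bQ^{-1} w_K$ lies in $Z_\gG(\gSG)(\bR)$; conjugating by $w_\bQ \in N_\gG(\gSG)$ (which normalises $Z_\gG(\gSG)$) then shows $w_\bQ w_K^{-1} \in Z_\gG(\gSG)(\bR)$. Hence $\omega_S'$ and $w_\bQ w_K^{-1}$ commute, so
\[
w_\bQ^{-1} \omega_S' \, w_\bQ w_K^{-1} = w_\bQ^{-1} \cdot (w_\bQ w_K^{-1}) \cdot \omega_S' = w_K^{-1} \omega_S',
\]
which combined with $k \in \KZ$ yields $w_\bQ'^{-1} \omega \in \Omega_\gG \cdot w_K^{-1} \cdot \Omega_{\gSG}^{[w]} \cdot \KZ$. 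The entire argument is essentially bookkeeping once the one nontrivial commutation step has been identified; that step, together with the careful choice of $w_K$ in Section~\ref{ssec:weyl-group-representatives} that makes it possible, is what the Weyl-representative machinery was set up to deliver.
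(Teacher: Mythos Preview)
Your proof is correct and follows essentially the same route as the paper's: the paper carries out the identical decomposition at the level of sets, first rearranging \eqref{eqn:decomposition-omega-bz} using that $w_\bQ w_K^{-1}$ commutes with $\gSG$, then combining with \eqref{eqn:decomposition-omega-h} and \eqref{eqn:decomposition-omega-mh} and reading off the result from the definition of $\Omega_\gG$. Your justification that $w_\bQ w_K^{-1} \in Z_\gG(\gSG)$ is slightly roundabout---it is immediate from the fact that $w_\bQ$ and $w_K$ represent the same Weyl element---but the conclusion and its use are exactly as in the paper.
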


\begin{proof}
Noting that \( w_\bQ w_K^{-1} \) commutes with \( \gSG \), we can rearrange \eqref{eqn:decomposition-omega-bz} to obtain
\[ \Omega_{\gBZ}  \subset  \Omega_{\gUZ}^{[w]}.\Omega_{\gMG}^{[w]}.w_\bQ w_K^{-1}.\Omega_{\gSG}^{[w]}. \]
Combining this with \eqref{eqn:decomposition-omega-h} and \eqref{eqn:decomposition-omega-mh}, we get
\begin{align*}
    \Omega_\gH
  & \subset  \Omega_{\gUH}.\Omega_{\gMH}  \subset  \Omega_{\gUH}.\Omega_{\gBZ}.\KZ
\\& \subset  \Omega_{\gUH} . \Omega_{\gUZ}^{[w]}.\Omega_{\gMG}^{[w]}.w_\bQ w_K^{-1}.\Omega_{\gSG}^{[w]} . \KZ.
\end{align*}
We can now read off the lemma using the definition of \( \Omega_\gG \).
\end{proof}

\subsection{The Siegel set for \texorpdfstring{\( \gG \)}{G}}

For each \( w \in W^\dag \), \( w_K^{-1} \Omega_{\gSG}^{[w]} w_K \) is a compact subset of \( \gSG(\bR)^+ \).
Hence there exists \( s > 0 \) such that \( \chi(\beta) \geq s \) for all \( \chi \in \Delta_\gG \) and all \( \beta \in w_K^{-1} \Omega_{\gSG}^{[w]} w_K \)
(since \( W^\dag \) is finite, we can choose a single value of \( s \) which works for all \( w \in W^\dag \)).

Let \( \fS_\gG \) be the Siegel set
\[ \fS_\gG = \Omega_\gG . \AG{t's} . \KG \subset \gG(\bR), \]
using \( t' \) from \cref{weyl-element-exists} and \( \Omega_\gG \) from section~\ref{ssec:omegaG}.
Let \( C \) be the finite set
\[ C = \{ w_\bQ' : w \in W^\dag \} \subset \gG(\bQ). \]

\begin{proposition}
\( \fS_\gH \subset C.\fS_\gG \).
\end{proposition}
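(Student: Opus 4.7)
The plan is to take an arbitrary element $x \in \fS_\gH$, write it as $x = \omega \alpha \kappa$ with $\omega \in \Omega_\gH$, $\alpha \in \AH{t}$, $\kappa \in \KH$, produce the appropriate Weyl representative $w_\bQ' \in C$, and verify directly that $w_\bQ'^{-1} x \in \Omega_\gG . \AG{t's} . \KG$. The element $w \in W^\dag$ comes from \cref{weyl-element-exists} applied to $\alpha$: it gives $\alpha \in w\AG{t'}w^{-1}$, and I fix the representative $w_K$ so that $\beta := w_K^{-1} \alpha w_K \in \AG{t'}$.

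Next I unfold $w_\bQ'^{-1} \omega$ using \cref{conjugate-of-omega}, writing $w_\bQ'^{-1}\omega = \omega_\gG . w_K^{-1} . \omega_\gSG . \kappa_\gZ$ with $\omega_\gG \in \Omega_\gG$, $\omega_\gSG \in \Omega_{\gSG}^{[w]}$, $\kappa_\gZ \in \KZ$. Then
\[
w_\bQ'^{-1} x = \omega_\gG . w_K^{-1} . \omega_\gSG . \kappa_\gZ . \alpha . \kappa.
\]
Since $\kappa_\gZ \in \KZ \subset Z_\gG(\gSH)(\bR)$ centralizes $\alpha \in \gSH(\bR)$, and since $\omega_\gSG \in \gSG(\bR)^+$ commutes with $\alpha \in \gSH(\bR) \subset \gSG(\bR)^+$ (using $\gSH \subset \gSG$ from \cref{SH-in-SG}), I can move both past $\alpha$ to get
\[
w_\bQ'^{-1} x = \omega_\gG . w_K^{-1} . \alpha . \omega_\gSG . \kappa_\gZ . \kappa.
\]
Replacing $w_K^{-1} \alpha$ by $\beta w_K^{-1}$ and then pulling $w_K$ past $\omega_\gSG$ (i.e.\ writing $w_K^{-1} \omega_\gSG = (w_K^{-1} \omega_\gSG w_K) . w_K^{-1}$) yields
\[
w_\bQ'^{-1} x = \omega_\gG . \beta . (w_K^{-1} \omega_\gSG w_K) . w_K^{-1} . \kappa_\gZ . \kappa.
\]

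Finally I combine the two split-torus factors into $\beta' := \beta . (w_K^{-1} \omega_\gSG w_K) \in \gSG(\bR)^+$ (they commute) and check it lies in $\AG{t's}$: for every $\chi \in \Delta_\gG$, the bound $\chi(\beta) \geq t'$ from \cref{weyl-element-exists} and the defining bound $\chi(w_K^{-1} \omega_\gSG w_K) \geq s$ from the choice of $s$ multiply to give $\chi(\beta') \geq t's$. The remaining tail $w_K^{-1} . \kappa_\gZ . \kappa$ lies in $\KG$ because $w_K \in \KG$ (\cref{w-k}), $\kappa_\gZ \in \KZ \subset \KG$, and $\kappa \in \KH \subset \KG$. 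Hence $w_\bQ'^{-1} x \in \Omega_\gG . \AG{t's} . \KG = \fS_\gG$, so $x \in w_\bQ' . \fS_\gG \subset C . \fS_\gG$.

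There is no serious obstacle left, since all the work has already been done in setting up \cref{weyl-element-exists}, \cref{conjugate-of-omega}, and the choice of $s$; the proof is essentially a careful bookkeeping exercise in rearranging the product using the various commutation relations (\,$\gSG$ is abelian, $\KZ$ centralizes $\gSH$, $w_K$ normalizes $\gSG$\,) so that the standard-form Siegel-set decomposition $\Omega_\gG . \AG{t's} . \KG$ becomes visible. The one point that requires attention is ensuring the right representative of $w$ is used at each occurrence — $w_\bQ'$ on the left to land in $C \subset \gG(\bQ)$, and $w_K$ inside to combine cleanly with the compact and split-torus factors.
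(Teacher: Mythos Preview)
Your proof is correct and follows essentially the same approach as the paper's: take an element of $\fS_\gH$, apply \cref{weyl-element-exists} to its $\AH{t}$-part, decompose the $\Omega_\gH$-part via \cref{conjugate-of-omega}, and then commute factors to read off the $\Omega_\gG . \AG{t's} . \KG$ decomposition. The only cosmetic difference is that you perform one extra (harmless) commutation of $\omega_\gSG$ past $\alpha$ before conjugating by $w_K$, whereas the paper keeps the product $\omega_\gSG \alpha$ together and conjugates it as a block.
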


\begin{proof}
Given \( \sigma \in \fS_\gH \), we can write
\[ \sigma = \mu \alpha \kappa \]
with \( \mu \in \Omega_\gH \), \( \alpha \in \AH{t} \) and \( \kappa \in \KH \).

By \cref{weyl-element-exists}, we can choose \( w \in W^\dag \) such that \( \alpha \in w \AG{t'} w^{-1} \).
By \cref{conjugate-of-omega}, we can write
\[ w_\bQ'^{-1} \mu = \nu w_K^{-1} \beta \lambda \]
where \( \nu \in \Omega_\gG \), \( \beta \in \Omega_{\gSG}^{[w]} \) and \( \lambda \in \KZ \).
Therefore
\[ w_\bQ'^{-1} \sigma = \nu w_K^{-1} \beta \lambda \, \alpha \kappa. \]
Since \( \lambda \in \KZ \subset \gZ(\bR) \), \( \lambda \) commutes with \( \alpha \in \gSH(\bR) \) so we can rewrite this as
\[ w_\bQ'^{-1} \sigma = \nu . w_K^{-1} \, \beta \alpha w_K . w_K^{-1} \lambda \kappa. \]

By definition, \( \nu \in \Omega_\gG \).
By the definition of~\( s \), we have \( w_K^{-1} \beta w_K \in \AG{s} \) while \( w_K^{-1} \alpha w_K \in \AG{t'} \) by \cref{weyl-element-exists}.
Hence
\[ w_K^{-1} \beta \alpha w_K \in \AG{t's}. \]
Finally, \( w_K^{-1} \), \( \lambda \) and \( \kappa \) are all in the group \( \KG \), so their product is also in \( \KG \).

Thus we have shown that \( w_\bQ'^{-1} \sigma \in \fS_\gG \), and so \( \sigma \in C.\fS_\gG \).
\end{proof}

\bibliographystyle{amsalpha}
\bibliography{reduction2}

\end{document}